\patchcmd{\section}{\scshape}{\bfseries}{}{}
\renewcommand{\@secnumfont}{\bfseries}
\DeclareMathOperator{\Spec}{Spec}
\DeclareMathOperator{\Spv}{Spv}
\DeclareMathOperator{\Frac}{Frac}
\DeclareMathOperator{\Ker}{Ker}
\newcommand{\angles}[1]{\langle #1 \rangle}
\theoremstyle{definition}
\newtheorem{mydef}{\textbf{Definition}}[section]
\newtheorem{myeg}[mydef]{\textbf{Example}}
\newtheorem{rmk}[mydef]{\textbf{Remark}}
\newtheorem*{que}{\textbf{Question}}
\theoremstyle{plain}
\newtheorem{mythm}[mydef]{\textbf{Theorem}}
\newtheorem*{nothma}{\textbf{Theorem A}}
\newtheorem*{nothmb}{\textbf{Theorem B}}
\newtheorem*{nothmc}{\textbf{Theorem C}}
\newtheorem*{nothmd}{\textbf{Theorem D}}
\newtheorem{mytheorem}[mydef]{\textbf{Theorem}}
\newtheorem{lem}[mydef]{\textbf{Lemma}}
\newtheorem{pro}[mydef]{\textbf{Proposition}}
\newtheorem{cor}[mydef]{\textbf{Corollary}}
\patchcmd{\abstract}{\scshape\abstractname}{\normalsize{\textbf{\abstractname}}}{}{}
\begin{document}

\title{Lattices, Spectral Spaces, and Closure Operations on Idempotent Semirings}
\author{Jaiung Jun}
\address{Department of Mathematics, State University of New York at New Paltz, New Paltz, NY 12561, USA}
\email{junj@newpaltz.edu}

\author{Samarpita Ray}
\address{Department of Mathematics, Indian Institute of Science Education and Research, Pune 411008, India}
\email{ray.samarpita31@gmail.com}

\author{Jeffrey Tolliver}
\address{Fortstone Research 18th floor 50 Milk St Boston, MA 02110, USA}
\email{jeff.tolli@gmail.com}

\subjclass[2010]{16Y60~(primary), 14T05~(primary), 06F30~(secondary), 06D99~(secondary), 13A18~(secondary), 54D80~(secondary) }
\keywords{semiring, idempotent semiring, spectral space, coherent space, space of valuations, closure operation, bounded distributive lattice, congruence, $k$-ideal, prime congruence, integral closure, valuation order, Frobenius closure}
\thanks{}

\begin{abstract}

Spectral spaces, introduced by Hochster, are topological spaces homeomorphic to the prime spectra of commutative rings. In this paper we study spectral spaces in perspective of idempotent semirings which are algebraic structures receiving a lot of attention due to its several applications to tropical geometry. We first prove that a space is spectral if and only if it is the \emph{ prime $k$-spectrum} of an idempotent semiring.  In fact, we enrich Hochster's theorem by constructing a subcategory of idempotent semirings which is antiequivalent to the category of spectral spaces. We further provide examples of spectral spaces arising from sets of congruence relations of semirings. In particular, we prove that the \emph{space of valuations} and the \emph{space of prime congruences} on an idempotent semiring are spectral, and there is a natural bijection of sets between the two; this shows a stark difference between rings and idempotent semirings. We then develop several aspects of commutative algebra of semirings. We mainly focus on the notion of \emph{closure operations} for semirings, and provide several examples. In particular, we introduce an \emph{integral closure operation} and a \emph{Frobenius closure operation} for idempotent semirings. 
\end{abstract}

\maketitle



\section{Introduction}

A semiring is an algebraic structure which assumes the same axioms as a ring except that one does not necessarily require additive inverses to exist. Typical examples include the semiring $\mathbb{N}$ of natural numbers, the Boolean semiring $\mathbb{B}$, or the tropical semiring $\mathbb{T}$. The theory of semirings has its own charms, but also recently people have found several applications of semirings, making the theory of semirings even more interesting. 

One application arises from tropical geometry. Tropical geometry is a new branch of algebraic geometry, where one studies an algebraic variety by means of its combinatorial shadow (a polyhedral complex) obtained from the underlying set of an algebraic variety and a valuation on a ground field. As commutative rings provide an algebraic foundation of algebraic geometry, commutative semirings provide an algebraic foundation of tropical geometry. The question of which algebraic structure provides ``the best'' algebraic foundation for tropical geometry has not been settled yet. There are various candidates, for instance, hyperfields \cite{viro}, blueprints \cite{lorscheid2019tropical} and tropical schemes \cite{giansiracusa2016equations} (or more generally, tropical ideals \cite{maclagan2016tropical}), which is the first paper introducing scheme theory to tropical geometry. However, the theory of idempotent semirings~\footnote{Recall that by an idempotent semiring, we mean a semiring such that $a+a=a$ for any element $a$.} is in the intersection of several approaches towards the algebraic foundation for tropical geometry, and hence it is important to develop tools and ideas for the commutative algebra of idempotent semirings to this end.

Another motivation to study commutative algebra of semirings arises from A.~Connes and C.~Consani's program on developing algebraic geometry in ``characteristic one'', where one develops basic languages and tools for algebraic geometry over more general algebraic structures (than commutative rings) to shed some light on the Riemann hypothesis; one fundamental idea of Connes-Consani program is to translate Weil's proof of the Riemann hypothesis for algebraic curves to the case of $\Spec \mathbb{Z}$. To this end, one should be able to understand $\Spec \mathbb{Z}$ as a ``curve'' defined over some field-like object (typically called ``the field with one element'' $\mathbb{F}_1$), and hence one should work beyond the category of commutative rings as $\mathbb{Z}$ is the initial object in the category of commutative rings and hence $\mathbb{Z}$ cannot be understood as an ``$\mathbb{F}_1$-algebra'' in the category of commutative rings. 

One essential ingredient of Connes-Consani program, which should be developed, is the language of homological algebra in characteristic one, which is very far from working with abelian categories. Semirings in this case seem to provide a reasonable algebraic structure on which homological algebra in characteristic one can be built as it was shown in \cite{connes2017homological} by Connes and Consani. Also, for an approach which simultaneously deals with homological algebra for semirings and hyperfields, we refer the reader to \cite{jun2019projective}.
	
Related, but slightly diverged motivation arises from J.~Borger's work. In \cite{borger2016boolean} and \cite{borger2016witt}, Borger proved that the big Witt functor can be generalized to semirings by observing that the big Witt functor is representable by the ring of symmetric functions, which has an $\mathbb{N}$-basis. In a similar vein, Borger claims that algebraic geometry over the semiring $\mathbb{N}$ encodes certain positivity of algebraic geometry over $\mathbb{Z}$ in a suitable way. See, also \cite{culling} for the notion of the \'{E}tale fundamental group of a scheme over $\mathbb{N}$ along with interesting examples. 

In both tropical geometry and $\mathbb{F}_1$-geometry, the theory of \emph{idempotent semirings} plays a key role. For instance, a tropical variety can be seen as the common zeros of polynomials with coefficients in an idempotent semiring. See \cite{maclagan2009introduction}. Also, in Connes and Consani's approach to $\mathbb{F}_1$-geometry, their main objects to develop the theory are idempotent semirings. See, for example, \cite{con5,connes2017homological}. One of the main motivations for the current paper is to contribute to the aforementioned momentum by developing and bringing more tools to the commutative algebra of idempotent semirings. In particular, we study spectral spaces, valuations, and closure operations in the context of idempotent semirings. We remark that we explore closure operations for general semirings, and find some interesting examples in the case of idempotent semirings. To the best of our knowledge, closure operations for semirings have never been considered, whereas valuations of semirings have been intensively studied, for instance \cite{jun2018valuations},  \cite{giansiracusa2016equations}, \cite{izhakian2011supertropical}.


We first search for an analogous statement of Hochster's theorem on prime spectra and spectral spaces for the case of idempotent semirings. In his seminal work \cite{hochster1969prime}, Hochster provides a topological characterization of prime spectra of commutative rings by introducing the notion of a spectral space, which is a quasi-compact, $T_0$, and sober topological space such that the set of all quasi-compact open subsets is an open basis and a finite intersection fo quasi-compact open subsets is again a quasi-compact open subset. In the case of semirings, one can easily show that the prime spectrum of a semiring is spectral by using the exact same argument as rings. So, one may ask for a given spectral space $X$, whether or not we can find a semiring $A$ in such a way that the prime spectrum of $A$ is homeomorphic to $X$. However, as rings are semirings, this is just a tautology. We instead ask the following question:

\begin{que}
For a given spectral space $X$, can we find an idempotent semiring (which can never be a ring) $A$ in such a way that the prime spectrum of $A$ is homeomorphic to $X$?
\end{que}

To answer this question, one cannot simply mimic Hochster's proof since many of Hochster's constructions fail to hold for the case of idempotent semirings. Our strategy to answer the above question is to appeal to the well-known relation between spectral spaces and bounded distributive lattices. In fact, we prove that the answer is affirmative if we restrict ourselves to a specific class of ideals, called \emph{ $k$-ideals} (Definition \ref{definition: saturated ideal}).  One of the first discussions on $k$-ideals of semirings can be found in \cite{senadhikari1992}. We define the {\it prime $k$-spectrum} of a semiring $A$ as the collection of all prime $k$-ideals of $A$ endowed with the hull-kernel topology (Definition \ref{k-spec}). We denote the {\it prime $k$-spectrum} by $Spec_k(A)$ and we prove the following:

\begin{nothma}(Theorem \ref{theorem: spectral theorem})
Let $X$ be a spectral space. Then, there exists an idempotent semiring $A$ such that the prime $k$-spectrum $\Spec_k A$ of $A$ is homeomorphic to $X$. 
\end{nothma}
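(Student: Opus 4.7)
The plan is to reduce the theorem to the classical duality between spectral spaces and bounded distributive lattices. Given a spectral space $X$, one can construct a bounded distributive lattice $L$---for instance, the lattice of quasi-compact open subsets of $X$ under union and intersection---whose set of prime lattice ideals, endowed with the hull-kernel topology, is homeomorphic to $X$. It therefore suffices to produce, from an arbitrary bounded distributive lattice $L$, an idempotent semiring $A$ whose prime $k$-spectrum recovers the prime-ideal spectrum of $L$.

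The natural candidate is $A = L$ itself, viewed as a semiring via $(L, \vee, \wedge, 0_L, 1_L)$: I would take the join $\vee$ for addition and the meet $\wedge$ for multiplication. Then $a + a = a \vee a = a$, so $A$ is an idempotent semiring, and the remaining semiring axioms follow from distributivity in $L$. With this structure I would verify three compatibility statements. First, the semiring ideals of $A$ coincide with the lattice ideals of $L$: the multiplicative absorption $r \wedge a \in I$ for all $r$ is equivalent to downward closure, since $r \wedge a \leq a$ for every $r \in L$, while downward closure conversely supplies $r \wedge a \in I$ whenever $a \in I$. Second, every such ideal is automatically a $k$-ideal in the sense of Definition \ref{definition: saturated ideal}, because $a \vee b \in I$ and $b \in I$ force $a \leq a \vee b \in I$, hence $a \in I$ by downward closure. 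Third, semiring-primeness and lattice-primeness of an ideal $\mathfrak{p}$ coincide, both being the condition that $a \wedge b \in \mathfrak{p}$ implies $a \in \mathfrak{p}$ or $b \in \mathfrak{p}$.

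These three observations yield a bijection between $\Spec_k A$ and the prime-ideal spectrum of $L$. To upgrade this bijection to a homeomorphism, I would compare basic closed sets: both topologies are generated by sets of the form $V(a) = \{\mathfrak{p} : a \in \mathfrak{p}\}$ for $a \in L$, and these correspond under the bijection. The main obstacle I anticipate is not conceptual but definitional: one must carefully reconcile the paper's conventions on semiring ideals, $k$-ideals (Definition \ref{definition: saturated ideal}), and the hull-kernel topology on $\Spec_k$ (Definition \ref{k-spec}) with the Stone-duality conventions for bounded distributive lattices. Once these alignments are in place, the construction $A = L$ produces the desired idempotent semiring with $\Spec_k A \cong X$.
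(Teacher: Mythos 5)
Your proposal is correct, and it is essentially a streamlined version of the paper's argument: both constructions produce the same semiring, namely the lattice of quasi-compact open subsets of $X$ with join as addition and meet as multiplication. The verifications you sketch all go through: downward-closedness of lattice ideals matches multiplicative absorption in the semiring (since $r \wedge a \leq a$), downward-closedness automatically forces the $k$-ideal property (since $y \leq x \vee y$), and elementwise primeness ($a \wedge b \in \mathfrak{p} \Rightarrow a \in \mathfrak{p}$ or $b \in \mathfrak{p}$) is exactly lattice primeness. You should cite the paper's proposition showing that a prime $k$-ideal is the same as a prime ideal that happens to be a $k$-ideal, since the paper's primary definition of prime $k$-ideal is phrased via products of $k$-ideals rather than elementwise. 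The topological verification is also right: both topologies have sub-basic closed sets $V(a) = \{\mathfrak{p} : a \in \mathfrak{p}\}$.

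Where the two proofs differ is in what they take for granted. You invoke Stone duality (that the prime ideal spectrum of the lattice of quasi-compact opens recovers $X$) as a black box and then reconcile semiring notions with lattice notions on top of it. The paper instead builds the machinery of algebraic lattices with multiplication, idealization, radicalization, and Zariski spaces, then proves Theorem \ref{theorem: spectral theorem} by showing $\mathcal{O}(X)^c$ is a radical idealic semiring and identifying $X$ with $\mathrm{Zar}(\mathcal{C}(X)) \cong \mathrm{Zar}(\mathcal{O}(X)) \cong \mathrm{Spec}_k(\mathcal{O}(X)^c)$ via an explicit bijection $p \mapsto \overline{\{p\}}$. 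The paper's route is longer but self-contained (it does not presuppose Stone duality for the homeomorphism), and the supporting machinery is reused to prove the categorical statement Theorem \ref{theorem: equivalence of cat} and the compatibility results $\mathrm{Spec}_k A \cong \mathrm{Spec}_k(I_{rad}(A)^c)$. Your route is shorter and perfectly valid if one is content to quote Stone duality, but it would not by itself give Theorem B or the surrounding results, which is presumably why the paper develops the heavier framework.
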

Also, the prime $k$-spectrum of any semiring is a spectral space (see, Proposition \ref{sat prime spec}(4)). Therefore, it follows from Theorem \ref{theorem: spectral theorem} that a space is spectral if and only if it is the prime $k$-spectrum of an idempotent semiring.\\
We mention here that Lescot introduced a notion of saturated ideals of a semiring in \cite{les2} and proved that when the semiring is idempotent, an ideal $I$ is a $k$-ideal if and only if $I$ is a saturated ideal (see, \cite[\S 8]{PP}). In Corollary 6.3 of \cite{les3}, Lescot has shown that the saturated prime spectrum of a semiring is a spectral space. Thus, when the semiring is idempotent, this implies that the prime $k$-spectrum is a spectral space. However, the proof of our Proposition \ref{sat prime spec}(4)) is different (and shorter) and it holds for any semiring.\\
 \\
In proving Theorem A, we also prove that there is a nice categorical equivalence between the opposite category of the category of spectral spaces and the category of certain semirings. We remark that our categorical equivalence is an enrichment of Hochster's theorem in the following sense; Hochster's theorem provides an essential image of the prime spectrum functor, however, his construction does not give an equivalence of a subcategory of commutative rings with the opposite category of spectral spaces. On the other hand, we prove that one can find a subcategory of idempotent semirings which is equivalent to the opposite category of spectral spaces as follows:

\begin{nothmb}(Theorem \ref{theorem: equivalence of cat})
There is an equivalence of categories between radical idealic semirings~\footnote{An idempotent semiring is equipped with a canonical order, and by an idealic semiring, we mean an idempotent semiring such that every element is bounded by $1$ with respect to the canonical order. See Definition \ref{definition: idealic}. It is further said to be radical if $x^2=x$ for all $x$. }(as a subcategory of the category of semirings) and bounded distributive lattices.  Furthermore, this equivalence commutes with forgetful functors (i.e. it is the identity on the level of sets). In particular, as the category of bounded distributive lattices is antiequivalent to the category of spectral spaces, we can conclude that the category of radical idealic semirings is also antiequivalent to the category of spectral spaces.
\end{nothmb}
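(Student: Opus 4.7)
The plan is to construct explicit mutually inverse functors between the category of radical idealic semirings and the category of bounded distributive lattices that do not change underlying sets, so that the equivalence is in fact an isomorphism of categories. The antiequivalence with spectral spaces then follows by composing with the well-known (Stone/Hochster) antiequivalence between bounded distributive lattices and spectral spaces.

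First, given a radical idealic semiring $A$, I will endow its underlying set with the canonical order $a \leq b \iff a+b=b$ and define $a \vee b := a+b$ and $a \wedge b := ab$. Since $A$ is idempotent, $+$ is exactly the join with respect to this order, so $(A,\vee,0)$ is a bounded-below join-semilattice. The key step is to verify that multiplication realizes the meet. Using that $A$ is idealic (every $x$ satisfies $x \leq 1$), I get $ab \leq a \cdot 1 = a$ and similarly $ab \leq b$, so $ab$ is a lower bound of $\{a,b\}$. For the greatest property, suppose $x \leq a$ and $x \leq b$. Multiplying $x+a=a$ by $x$ gives $x^2 \leq xa$, and multiplying $x+b=b$ by $a$ gives $ax \leq ab$; combining and using $x^2=x$ (radicality) yields $x \leq ab$. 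Hence $\wedge = \cdot$. The distributive law $a(b+c)=ab+ac$ of the semiring is precisely $a \wedge (b \vee c) = (a \wedge b) \vee (a \wedge c)$, and $1$ is the top by the idealic hypothesis, so $(A,\vee,\wedge,0,1)$ is a bounded distributive lattice. I will call this assignment $F$.

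In the reverse direction, given a bounded distributive lattice $(L,\vee,\wedge,0,1)$ I will define $a+b := a \vee b$ and $ab := a \wedge b$, and check the semiring axioms directly: commutativity, associativity, distributivity, and the identities $a+0=a$, $a \cdot 1 = a$, $a \cdot 0 = 0$ are all standard lattice identities. The resulting semiring is clearly idempotent ($a \vee a = a$), idealic ($a \vee 1 = 1$), and radical ($a \wedge a = a$); call this assignment $G$. By construction $F$ and $G$ are inverse to each other on objects and act as the identity on underlying sets. On morphisms, a semiring map $\phi\colon A\to B$ between radical idealic semirings preserves $+,\cdot,0,1$, which under the identification are $\vee,\wedge,0,1$, so $\phi$ is a bounded-lattice homomorphism; the converse is identical. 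Hence $F$ and $G$ induce an isomorphism of categories that commutes with the forgetful functors to $\mathbf{Set}$.

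Finally, Stone/Hochster duality provides an antiequivalence between bounded distributive lattices and spectral spaces, sending a lattice $L$ to the spectral space of its prime filters (equivalently, prime ideals). Composing this with $F$ yields the desired antiequivalence between radical idealic semirings and spectral spaces. The main technical obstacle in the argument is the identification $\wedge = \cdot$ in a radical idealic semiring, specifically the inequality $x \leq ab$ from $x \leq a$ and $x \leq b$, which is precisely where both the idealic hypothesis (providing an upper bound $1$) and the radical hypothesis $x^2=x$ are used in tandem; everything else is a routine translation of semiring axioms to lattice axioms.
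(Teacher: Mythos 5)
Your proposal is correct and follows essentially the same route as the paper: both prove that in a radical idealic semiring the product realizes the meet (the paper states this as a separate lemma, $z\leq x,\ z\leq y \Rightarrow z=z^2\leq xy$, which is the same computation you carry out slightly more verbosely), show that the identity map on underlying sets gives mutually inverse constructions between radical idealic semirings and bounded distributive lattices, and then invoke Stone/Hochster duality for the final antiequivalence with spectral spaces.
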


Next, we study the space of valuations on an idempotent semiring $A$ in connection with the space of valuation orders on $A$ introduced by the third author in \cite{tolliver2016extension}. We also provide a natural bijection (as sets) between the space of valuations and the space of prime congruences as in \cite{bertram2017tropical} and \cite{joo2014prime}.  We remark that our bijection is very specific to idempotent semirings which is not true for rings, see Remark \ref{remark: rem}. We prove the following:

\begin{nothmc}(Propositions \ref{pro: Spv and Spec}, \ref{pro: spectral1})
Let $A$ be an idempotent semiring. Let $\Spv A$ (resp. $\Spec_\mathfrak{c} A$) the space of valuations (resp. the space of prime congruences) on $A$. Then, there is a natural bijection of sets between $\Spv A$ and $\Spec_\mathfrak{c} A$. Furthermore, $\Spv A$ and $\Spec_\mathfrak{c} A$ are spectral spaces. 
\end{nothmc}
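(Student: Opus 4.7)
The plan is to first establish the set-theoretic bijection $\Spv A \leftrightarrow \Spec_\mathfrak{c} A$ and then verify spectrality of each space. The idempotent hypothesis is essential throughout.

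For the bijection, I would send a valuation $v\colon A \to T$ (with $T$ a totally ordered idempotent semifield) to its kernel congruence
\[
\mathfrak{c}_v \;:=\; \{(a,b)\in A\times A : v(a)=v(b)\}.
\]
Since $v$ factors as $A \twoheadrightarrow A/\mathfrak{c}_v \hookrightarrow T$, the quotient $A/\mathfrak{c}_v$ inherits the structure of a totally ordered cancellative idempotent semiring, so $\mathfrak{c}_v$ is prime. Conversely, given a prime congruence $\mathfrak{c}$, the quotient $A/\mathfrak{c}$ is totally ordered and cancellative, hence embeds into its idempotent semifield of fractions $\Frac(A/\mathfrak{c})$, and the composite $A \to \Frac(A/\mathfrak{c})$ is the associated valuation. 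These two constructions are inverse up to the standard equivalence of valuations. What makes this work cleanly in the idempotent setting — and fails for ordinary commutative rings, as Remark \ref{remark: rem} warns — is that idempotent addition equips $A$ with the canonical partial order $a \le b \Leftrightarrow a+b = b$, so a totally ordered cancellative quotient already records all the comparative data of a valuation; no extra datum beyond the congruence is required.

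For spectrality of $\Spec_\mathfrak{c} A$, I would follow Hochster's proof for ordinary prime spectra. Embed $\Spec_\mathfrak{c} A$ into the Cantor-type space $\{0,1\}^{A\times A}$ by identifying each congruence with its characteristic function, and then verify that (i) the subset cut out by the congruence axioms (reflexivity, symmetry, transitivity, and compatibility with the semiring operations) is closed in the product topology, and (ii) the primality condition (that $A/\mathfrak{c}$ be totally ordered and cancellative) is an intersection of clopen constraints in the product. Compactness of the product, together with sobriety of the hull-kernel topology, then yields spectrality via Hochster's criterion, with the sets $U(a,b) = \{\mathfrak{c} : (a,b) \notin \mathfrak{c}\}$ forming a subbasis of quasi-compact opens. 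The same blueprint applies to $\Spv A$: embed it into $\{<,=,>\}^{A\times A}$ recording the pairwise comparison of values and check that the valuation axioms define a closed constructible subset. Alternatively, using the idempotency identity $v(a)\le v(b) \Leftrightarrow v(a+b)=v(b) \Leftrightarrow (a+b,b)\in \mathfrak{c}_v$, one can translate the natural subbasis on $\Spv A$ directly into subbasic sets on $\Spec_\mathfrak{c} A$ and transport spectrality through the bijection.

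The main obstacle I expect is the reverse direction of the bijection: constructing from an abstract prime congruence $\mathfrak{c}$ a canonical totally ordered semifield of fractions for $A/\mathfrak{c}$ and verifying that the resulting valuation is well-defined up to equivalence and recovers $\mathfrak{c}$ as its kernel congruence. The required order-theoretic analysis of cancellative idempotent quotients is the technical heart of the argument, and the same machinery furnishes the bridge between the subbases used in the two spectrality proofs.
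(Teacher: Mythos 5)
Your treatment of the bijection and of the spectrality of $\Spec_{\mathfrak{c}}A$ tracks the paper closely and is correct: the kernel congruence of a valuation is prime (quotients are totally ordered and cancellative by Jo\'o--Mincheva), and conversely a prime congruence yields the valuation $A\to\Frac(A/\mathfrak{c})$; for $\Spec_{\mathfrak{c}}A$ the paper indeed exhibits it as a patch-closed subset of $2^{A\times A}$ with the hull-kernel topology and invokes Hochster. These parts are fine.

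The gap is in the spectrality of $\Spv A$, and it comes down to which topology you are trying to show is spectral. Your first sketch (embed into a Cantor-type product and check that the defining conditions are closed) would prove that $\Spv A$ is compact Hausdorff and zero-dimensional in the product topology --- a Boolean space, hence trivially spectral as its own patch space. But that is not the topology of interest. The paper puts the ``adic-style'' topology $\tau$ on $\Spv A$, generated by the sets
\[
D(x,y)=\{[\nu]\mid \nu(x)\le\nu(y),\ \nu(y)\neq 0\},
\]
and proving spectrality of $(\Spv A,\tau)$ needs a different criterion: the paper quotes a result (Proposition \ref{proposition: Hochster criterion}) stating that a $T_0$ topology generated by subsets that are clopen for some ambient quasi-compact topology is spectral; one then verifies quasi-compactness of the product topology (via the identification with valuation orders and \cite[Proposition 7.4]{tolliver2016extension}), that each $D(x,y)$ is clopen there, and that $\tau$ is $T_0$. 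Your alternative of ``transporting spectrality through the bijection'' also does not work as stated, because under the bijection the sets $D(x,y)$ correspond to $\{\mathfrak{c}\mid (x+y,y)\in\mathfrak{c},\ (y,0)\notin\mathfrak{c}\}$, which are intersections of closed and open sets in the hull-kernel topology on $\Spec_{\mathfrak{c}}A$, not open sets; the bijection is not a homeomorphism between the topologies you have in hand, so spectrality cannot simply be pushed across it. You would also want to note that identifying $\Spv A$ with the set of valuation orders (Proposition \ref{proposition: valuation order = space of valuations}) is what makes the embedding into $\{0,1\}^{A\times A}$ and the quasi-compactness input available; that is the structural ingredient, rather than the reverse direction of the bijection, which you flagged as the expected obstacle but which the Jo\'o--Mincheva machinery handles cleanly.
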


Recall that a closure operation on ideals (or modules in general) of a commutative ring extends a given ideal in a certain way (depending on each closure operation). Intuitively speaking in geometric pictures, a closure operation may clear away some ``bad part'' from a closed subscheme; for instance, when it comes to curves, the integral closure operation ``clears away singular points''. Closure operators were also studied thoroughly by Andrew Dudzik \cite{dudzik2017quantales} in the context of hyperstructures.

For a commutative ring $A$, there is a bijective correspondence between ideals and congruence relations (see the proof of Corollary 2 of \cite{j2}). However, for a semiring $A$, we do not have this bijection anymore in general. 
So, the theory of ideals and the theory of congruence relations diverge for semirings. To this end, in Section \ref{section: section3}, we consider closure operations on both ideals and congruence relations of a semiring and obtain spectral spaces arising from finite type closure operations (analogous to the case of rings). We also note that in \cite{ray2018closure}, the second author studied closure operations and valuations on monoids and spectral spaces arising in these contexts, which could be potentially related to the current paper. 

Our main construction will be an integral closure operation on an idempotent semiring in Section \ref{section: closure operations for semirings}, paralleling the integral closure of the case of rings. To be a bit more specific, let $A$ be an idempotent semiring and $I$ be an ideal of $A$. We define the following set:
\[
I^{{int}}:=\{x\in A~ \mid ~x^n+ a_1x^{n-1} + \hdots + a_n = b_1x^{n-1} + \hdots + b_n \text{ for some } n \in \mathbb{N} \text{ and } a_i,b_i\in I^{i}\}.
\]
We also let $I'$ be the intersection of all  $k$-ideals of $A$ containing $I$. With this, we prove the following among other things. 

\begin{nothmd}(Proposition \ref{pro: integral closure})
Let $A$ be an idempotent semiring and $I$ be an ideal of $A$. Then, $I^{{int}}$, which denotes the set of integral elements over the ideal $I$, is an ideal of $A$. Furthermore, 
\[
I \mapsto (I^{int})',
\]
where $(I^{int})'$ is the $k$-closure of the ideal $I^{int}$, defines a closure operation on the set $\mathcal{I}$ of all ideals of $A$, which we call \emph{the integral closure operation}. 
\end{nothmd}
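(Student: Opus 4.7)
My plan is to prove the two assertions separately: first that $I^{int}$ is an ideal, and then that $I \mapsto (I^{int})'$ satisfies the three axioms of a closure operation. Two preliminary facts, both immediate from $z+z=z$ and the canonical order, will drive the whole argument. First, any witness $x^n + \sum a_i x^{n-i} = \sum b_i x^{n-i}$ for $x \in I^{int}$ can be symmetrized by adding $\sum b_i x^{n-i}$ to both sides to obtain the equivalent ``inequality form'' $x^n + \sum c_i x^{n-i} = \sum c_i x^{n-i}$ with $c_i := a_i + b_i \in I^i$, i.e.\ $x^n \leq \sum c_i x^{n-i}$ in the canonical order. Second, the $k$-closure of an ideal $K$ in an idempotent semiring is precisely its downward closure $\{a \in A : \exists k \in K,\ a \leq k\}$, since $x \leq z$ iff $x+z=z$ makes being a $k$-ideal equivalent to being downward closed.

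For $I^{int}$ being an ideal, that $0 \in I^{int}$ and that $I^{int}$ is closed under $A$-multiplication (multiply the inequality form through by $a^n$ and absorb powers of $a$ into the $c_i$, which stay in $I^i$) are routine. The substantive step is closure under addition, where the key tool is the idempotent identity $(x+y)^N = \sum_{k=0}^{N} x^k y^{N-k}$, which holds because all binomial coefficients collapse to $1$. Given $x^n \leq \sum_{i=1}^{n} c_i x^{n-i}$ and $y^m \leq \sum_{j=1}^{m} d_j y^{m-j}$, I take $N := n+m-1$ so that in the expansion of $(x+y)^N$ every term $x^k y^{N-k}$ has either $k \geq n$ or $N-k \geq m$, and is therefore reducible by one of the two relations. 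Using the order bound $x^\alpha y^\beta \leq (x+y)^{\alpha+\beta}$ to collect the reduced terms, one obtains $(x+y)^N \leq \sum_\ell e_\ell (x+y)^{N-\ell}$ with $e_\ell = c_\ell + d_\ell \in I^\ell$, which is the desired integral relation for $x+y$.

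For the closure-operation axioms, extensivity $I \subseteq I^{int} \subseteq (I^{int})'$ is immediate from the trivial degree-$1$ relation $x + 0 = x$, and monotonicity follows from $I \subseteq J \Rightarrow I^i \subseteq J^i$ together with the monotonicity of the $k$-closure. The main obstacle is idempotence. Setting $J := (I^{int})'$, we need $(J^{int})' = J$, which, since $J$ is already a $k$-ideal, reduces to $J^{int} \subseteq J$. Using the downward-closure description of $J$, any coefficient $a \in J^i$ is bounded above by an element $\tilde a \in (I^{int})^i$, so any $J$-integral relation for $y$ upgrades to an $I^{int}$-integral relation for $y$, reducing the problem to the inclusion $(I^{int})^{int} \subseteq (I^{int})'$.

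This last inclusion is the hardest step and is a transitivity-of-integrality statement. Because the semiring has no additive inverses no determinant trick is available, so my plan is to unpack any integral relation $y^n \leq \sum B_i y^{n-i}$ with $B_i \in (I^{int})^i$ by writing each $B_i$ as a sum of products of elements $z \in I^{int}$, substituting for each such factor its own integral inequality over $I$, and iteratively reducing with the idempotent expansion and the bound $x^\alpha y^\beta \leq (x+y)^{\alpha+\beta}$ until all coefficients live in the requisite $I^\ell$. The bookkeeping of the final exponent $N$ in terms of the degree of $y$ and the integral degrees of all the auxiliary $z$'s mirrors the classical module-finite argument for transitivity of integral extensions, but is carried out entirely through inequalities and idempotent sums rather than through linear algebra; I expect this combinatorial reduction to be the longest and most delicate part of the proof.
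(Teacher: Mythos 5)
Your overall architecture is correct and mirrors the paper's: you establish that $I^{int}$ is an ideal via the idempotent binomial expansion, then reduce the idempotence of $I\mapsto(I^{int})'$ to the single inclusion $(I^{int})^{int}\subseteq(I^{int})'$, via the observation that coefficients in $\bigl((I^{int})'\bigr)^i$ are bounded above by elements of $(I^{int})^i$. That reduction is essentially the paper's Lemma~\ref{int-sat} applied in the right place, and your ``inequality form'' $x^n\leq\sum c_ix^{n-i}$, $c_i\in I^i$, is a valid repackaging of the definition.

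The gap is that you do not actually prove the inclusion $(I^{int})^{int}\subseteq(I^{int})'$: you correctly identify it as the crux, and then give only a plan (``unpack each $B_i$ as a sum of products, substitute, iterate, bookkeep the exponents''). With your multi-coefficient form this really is a combinatorial morass, because each $c_i\in(I^{int})^i$ decomposes into a sum of $i$-fold products of distinct $I^{int}$-elements, each carrying its own integral relation of its own degree, and the reductions interact. You flag this as ``the longest and most delicate part'' but leave it undone, so as written the proposal does not establish idempotence.

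What the paper does differently — and what you are missing — is a further normalization that makes this step short. Before anything else, the paper proves that
\[
I^{int}=\{x\in A\mid\exists\,z\in I,\ n\in\mathbb{N}\text{ such that }(x+z)^n=z(x+z)^{n-1}\},
\]
i.e.\ the general relation with coefficients $a_i,b_i\in I^i$ can always be replaced by one whose coefficients are powers $z^i$ of a \emph{single} element $z\in I$ (take $z$ to be the sum of all atomic factors from $I$ occurring in the $a_i,b_i$). With that single-witness form, the transitivity step collapses to a two-variable computation: given $(x+z)^n=z(x+z)^{n-1}$ with $z\in I^{int}$ and $(z+w)^m=w(z+w)^{m-1}$ with $w\in I$, the binomial identity $(a+b)^{m+n}=a^m(a+b)^n+b^n(a+b)^m$ yields directly $(x+z+w)^{nm}=w(x+z+w)^{nm-1}$, hence $x+z\in I^{int}$ and, since $z\in I^{int}$ and $(I^{int})'$ is a $k$-ideal, $x\in(I^{int})'$. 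Your ``inequality form'' keeps $n$ independent coefficients $c_i$ and therefore does not admit this clean two-step substitution. To repair your proof you should either prove the single-witness normalization and then run the two-variable computation, or genuinely carry out (not merely describe) the iterative bookkeeping — but I would not expect the latter to be shorter than simply proving the normalization lemma first.
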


Finally, we introduce the Frobenius closure for idempotent semirings in $\S \ref{subsection: frobenius closure}$. We also interpret the radical operation for congruences, first introduced in \cite{bertram2017tropical} and further studied in \cite{joo2014prime}, as a closure operation on the set of congruences on $A$. \\

This paper is organized as follows. In $\S2$, we provide the necessary terminology for the paper. In $\S 3$, we prove Theorem A and an enrichment (Theorem B) providing a categorical equivalence between a subcategory of semirings and the category of spectral spaces. In the same section, we also consider closure operations on both ideals and congruence relations of a semiring and obtain spectral spaces arising from finite type closure operations. In $\S 4$, we introduce the space of valuations and valuation orders for idempotent semirings, and prove that they are spectral spaces. We further provide a natural bijection between the space of valuations and the set of prime congruences. Finally, in $\S 5$, we explore several examples of closure operations for idempotent semirings. In particular, we introduce the notion of an integral closure and a Frobenius closure in this setting. 
\vspace{0.5cm}

\textbf{Acknowledgments} J.J. was supported by AMS-Simons travel grant. J.J. and S.R. thank Kalina Mincheva and D\'aniel Jo\'{o} for helpful conversations. J.J. and J.T. are grateful to the organizers of the JAMI workshop: Riemann-Roch in characteristic one and related topics at the Johns Hopkins University, where they could work on the project together. The authors also thank to Neil Epstein for helpful comments on the first draft of the paper. 

\section{Preliminaries}

In this section, we introduce the basic objects studied in this paper. In the first subsection, we recall basic definitions and examples of semirings and congruences. In the second subsection, we review the notion of spectral spaces and provide some examples. In the third subsection, we briefly recall the definition of closure operation. The readers, who are familiar with the topics in this section, may skip to the next section.

\subsection{Semirings, Congruences, and Lattices}
In this section, we review basic definitions for semirings, congruences, and lattices which will be used in this paper. We assume that all semirings are commutative, unless otherwise stated, starting from the following definition. 

\begin{mydef}
By a \emph{semiring}, we mean a nonempty set $A$ with two binary operation $+$ and $\cdot$ such that $(A,+)$ and $(A,\cdot)$ are commutative monoids and $(a+b)c= ac+bc$ and $0\cdot a=0$ for all $a,b,c \in A$.  When $(A-\{0\},\cdot)$ is a group, $A$ is said to be a \emph{semifield}. 
\end{mydef}

A semiring $A$ is said to be \emph{additively idempotent} if $a+a=a$ for all $a \in A$.

\begin{myeg}(Boolean semifield)
Let $\mathbb{B}:=\{0,1\}$. One defines multiplication as usual. Addition is defined as follows:
\[
0+0=0, \quad 1+0=1, \quad 1+1=1. 
\]	
$\mathbb{B}$ is called the \emph{Boolean semifield}. 
\end{myeg}

\begin{myeg}(Tropical semifield)
Let $\mathbb{T}:=\mathbb{R} \cup \{-\infty\}$. One defines multiplication of $\mathbb{T}$ as the ordinary addition of $\mathbb{R}$ with the rule that $a\cdot (-\infty)=-\infty$ for any $a \in \mathbb{T}$. Addition is defined as follows:
\[
x+y:=\max\{x,y\}.
\]
We have $0_\mathbb{T}=-\infty$ and $1_\mathbb{T}=0$. $\mathbb{T}$ is called the \emph{tropical semifield}. 
\end{myeg}

\begin{mydef}
Let $A$ and $B$ be semirings. A \emph{homomorphism} from $A$ to $B$ is a function $f:A \to B$ such that for all $a,b \in A$, 
\[
f(a+b)=f(a)+f(b), \quad f(ab)=f(a)f(b), \quad f(0)=0, \quad f(1)=1.
\]
\end{mydef}

Here are some examples. 

\begin{myeg}
The following function
\[
f: \mathbb{T} \longrightarrow \mathbb{B}, \quad a \mapsto \begin{cases}
1 \textrm{ if $a \neq -\infty$ } \\
0 \textrm{ if $a = -\infty$}
\end{cases}
\]
is a homomorphism. Also the following function
\[
g: \mathbb{B} \longrightarrow \mathbb{T}, \quad f(0)=-\infty, \quad f(1)=0.
\]
is a homomorphism. 
\end{myeg}

\begin{mydef}
Let $A$ be a semiring. 
\begin{enumerate}
\item 
$A$ is said to be an \emph{integral} semiring if $A$ does not have any zero divisor, i.e., $ab=0$ implies either $a=0$ or $b=0$ $\forall a,b \in A$.
\item
$A$ is said to be a \emph{multiplicatively cancellative} semiring if $A$ satisfies the following:
\[
ac=bc \implies a=b, \quad \forall a,b \in A \textrm{ and }c \neq 0 \in A.
\]
\item 
$A$ is said to be \emph{strict} (or {\it zero-sum free}) if for each $a \in A$, there is no $x \in A$ such that $x+a=0$. 
\end{enumerate}
\end{mydef}

\begin{rmk}
If $A$ is a commutative ring, then being integral is same as being multiplicatively cancellative. However, when $A$ is a semiring, cancellativity implies integrality in general, but not conversely. For instance, the polynomial semiring $\mathbb{T}[x]$ with coefficients in the tropical semifield $\mathbb{T}$ is integral but not cancellative. 
\end{rmk}

From now on, by an idempotent semiring we always mean an additively idempotent semiring unless otherwise stated.

\begin{mydef}\label{definition: saturated ideal}
Let $A$ be a semiring.
\begin{enumerate}
\item 
By an \emph{ideal} of $A$, we mean an additive submonoid $I$ such that $AI \subseteq I$.
\item
An ideal $I$ which is not $A$ is called a \emph{proper ideal}.
\item 
A {\it $k$-ideal} of $A$ is an ideal $I$ which satisfies the following condition:
 $$ x \in I,~x+y \in I\implies y \in I \qquad \forall~x,y \in A$$
\item
A \emph{prime ideal} of $A$ is a proper ideal $\mathfrak{p}$ of $A$ such that if $ab \in \mathfrak{p}$ then $a \in \mathfrak{p}$ or $b \in \mathfrak{p}$ $\forall a,b \in A$. 
\item
A \emph{maximal ideal} of $A$ is a proper ideal $\mathfrak{m}$ which is not contained in any other proper ideal. 
\end{enumerate}
\end{mydef}

Let $A$ be a semiring. As in the classical case, let $X=\Spec A$ be the set of prime ideals of $A$ and we impose topology on $X$ in such a way that the closed sets are of the following form:
\[V(I):=\{\mathfrak{p} \in X \mid I \subseteq \mathfrak{p}\}.\]
One can mimic the classical construction of a structure sheaf for a semiring spectrum $\Spec A$ to make $\Spec A$ a locally semiringed space. In general, a semiring scheme is defined to be a locally semiringed space which is locally isomorphic to $\Spec A$ for some semiring $A$. For details, we refer the readers to \cite{jun2017vcech}.

Next, we recall the definition of a \emph{congruence} $C$ on a semiring $A$. By a congruence $C$ on $A$, we mean an equivalence relation on $A$ which is compatible with the algebraic structure of $A$, i.e., for any $a,b,c,d \in A$, if $a \sim b$ and $c\sim d$, then we have $a+c \sim b+d$ and $ac \sim bd$. Equivalently, a congruence $C$ is a subsemiring of $A \times A$ which is an equivalence relation. To be specific, a congruence is a subset $C$ of $A \times A$ satisfying the following conditions: for any $a,b,c,d \in A$,
\begin{enumerate}
	\item 
	$(a,a) \in C$; (reflexive) 
	\item 
	$(a,b) \in C$ if and only if $(b,a) \in C$; (symmetric).
	\item 
	If $(a,b),(b,c) \in C$, then $(a,c) \in C$; (transitive).	
	\item 
	If $(a,b),(c,d) \in C$, then $(a+c,b+d) \in C$; (compatible with addition). 
	\item 
	If $(a,b),(c,d) \in C$, then $(ac,bd) \in C$; (compatible with multiplication). 	
\end{enumerate}

\begin{mydef}
Let $A$ be a semiring and $C_1$ and $C_2$ be congruences on $A$. We write $C_1 \subseteq C_2$ if $C_1$ is a subset of $C_2$ by considering $C_1$ and $C_2$ as subsets of $A \times A$. We say that $C_1$ is a \emph{subcongruence} of $C_2$ if $C_1 \subseteq C_2$. 
\end{mydef}

\begin{rmk}\label{remark: right before}
In terms of equivalence relations, $C_1$ being a subcongruence of $C_2$ means that that if $a \sim_{C_1} b$ then $a \sim_{C_2} b$. 	
\end{rmk}

Recall that by a congruence $C$ on $A$ generated by a set $X \subseteq A \times A$, we mean the following congruence:
\[
C:=\bigcap_{X \subseteq E} E,
\]
that is, the intersection of all congruences $E$ containing $X$. 
The recipe to construct $C$ is as follows:
\begin{enumerate}	
	\item 
	Construct $X':=X \cup \{(a,b) \in A \times A : (b,a) \in X\}$. 
	\item 
	Construct $X'':=X' \cup \{(a,a): a \in A\}$. 
	\item 
	Construct the subsemiring $C_0$ of $A \times A$ generated by $X''$. 
	\item 
	Take the transitive closure $\overline{C_0}$, then $C=\overline{C_0}$. 
\end{enumerate}

The point of this construction is that after taking the transitive closure, we do not have to go back to $(1)$, i.e., $C=\overline{C_0}$. 

In \cite{bertram2017tropical}, A.~Bertram and R.~Easton first introduced (and further studied by Jo\'o and Mincheva in \cite{joo2014prime}) the \emph{twisted product} $x \cdot_t y$ of elements $x=(x_1,x_2),y=(y_1,y_2) \in A \times A$ as follows:
\[
(x\cdot_t y):=(x_1y_1+x_2y_2,x_1y_2+x_2y_1). 
\]

The product\footnote{One may also use the coordinate-wise product to define the product of two congruences which produces a different congruence.} of two congruences $C$ and $D$ is defined as the congruence generated by the following set:
\[
\{c \cdot_t d~|~ c\in C,~d\in D\}.
\] 
The twisted product has been introduced to find a ``good'' notion of prime congruences in tropical geometry. In fact, Jo\'{o} and Mincheva used the twisted product to prove a version of the tropical nullstellensatz. We will prove in $\S 4$ that there is a natural bijection between the set of prime congruences (prime congruence spectrum) and the space of valuations.

\begin{rmk}\label{remark: classical theory}
In ring theory, there is one-to-one correspondence between ideals and congruence relations (see the proof of Corollary 2 of \cite{j2}). 
One stark contrast between ring theory and semiring theory is that there is no longer one-to-one correspondence between ideals and congruence relations for semirings. Furthermore, even if $C$ and $D$ are finitely generated congruences on a semiring $A$, $CD$, as the congruence generated by the set $\{c \cdot_t d~|~ c\in C,~d\in D\}$, does not have to be finitely generated as Example \ref{example: finitely generated} illustrates. 
\end{rmk}

\begin{myeg}\label{example: finitely generated}
The authors learned this example from D\'aniel Jo\'o. This example shows that even when two congruences $C$ and $D$ are generated by a single element, $CD$ may not be finitely generated. 

Let $A=\mathbb{B}[x,y]$ be the two variable polynomial semiring with coefficients in the Boolean semifield $\mathbb{B}$. Let $C$ and $D$ be the congruences generated by $(x,y)$. Then, the congruence $CD$ will have the elements of the following form:
\[
(x^{2n} + y^{2n},~ x^n y^n), \quad  n \in \mathbb{N}. 
\]
But, one can easily see by induction that in $CD$ there is no non-diagonal element of the form $(x^k,t)$ nor $(y^k,q)$ for $t\neq x^k,q \neq y^k \in \mathbb{B}[x,y]$. This is because $(x,y)\cdot_t(x,y)=(x^2+y^2,xy)$ and the congruence $CD$ is obtained by the procedure described right after Remark \ref{remark: right before}; so, it is not possible to have a monomial $x^k$ nor $y^k$. Also, one may notice that $x^{2n}+y^{2n}$ does not factor over $\mathbb{B}$. It follows that no non-diagonal pair of the form $(x^{2n}+y^{2n},\dots)$ can be generated by adding or multiplying lower degree relations. In particular, $(x^{2n}+y^{2n},x^ny^n)$ cannot be in the transitive closure of what we can obtain by adding and multiplying lower degree relations. Therefore, $CD$ is not finitely generated since we have to add at least $(x^{2n}+y^{2n},x^ny^n)$ for each $n \in \mathbb{N}$ in a set of generators. 
\end{myeg}

Finally, we recall some definitions of the lattice theory, which will be used to prove an idempotent analogue of Hochster's theorem. 
 
\begin{mydef}
Let $(L,\preceq)$ be a partially ordered set. 
\begin{enumerate}
\item 
$(L,\preceq)$ is said to be a \emph{lattice} if for any $x,y \in L$, the greatest lower bound $x\wedge y$ and the least upper bound $x \vee y$ exist. 
\item 
A lattice $(L,\preceq)$ is said to be \emph{distributive} if for any $x,y,z \in L$, we have
\[
x \vee (y \wedge z) = (x \vee y) \wedge (x \vee z). 
\]
\item 
A lattice $(L,\preceq)$ is said to be \emph{bounded} if there exist elements $0, 1 \in L$ such that for any $x \in L$, $0 \preceq x \preceq 1$, or equivalently $x \vee 1 = 1$ and $x \wedge 0=0$. 
\item 
A lattice $(L,\preceq)$ is said to be \emph{complete} if for any subset $M \subseteq L$, $\vee_{x \in M} x$ and $\wedge_{x \in M} x$ exist. 
\end{enumerate}	
\end{mydef}

\begin{myeg}
Any totally ordered set $(L,\preceq)$ is a distributive lattice. 
\end{myeg}

\begin{mydef}
\noindent	\begin{itemize}
		\item[(1)] Let $L_1$ and $L_2$ be lattices. A {\it lattice homomorphism} $f:L_1 \to L_2$ is a function such that for any $x,y \in L_1$,
		\[
		f(x \vee y) = f(x) \vee f(y), \quad f(x \wedge y) = f(x) \wedge f(y). 
		\]
		\item[(2)] Let $L_1$ and $L_2$ be bounded lattices. Then, a lattice homorphism $f:L_1 \to L_2$ is said to be a {\it $\{0,1\}$-lattice homomorphism} if $f(0)=0$ and $f(1)=1$. 
	\end{itemize}

\end{mydef}

Note that the bounded distributive lattices with the $\{0,1\}$-lattice homomorphisms form a category and this is what we mean by the {\it category of bounded distributive lattices} in this paper.

\subsection{Spectral spaces}
In \cite{hochster1969prime}, Hochster provided purely topological characterization of affine schemes by introducing the notion of spectral spaces. We first recall the definition. 

\begin{mydef}\cite{hochster1969prime}
A \emph{spectral space} is a quasi-compact and $T_0$ topological space $X$ which satisfies the following conditions:
\begin{enumerate}
\item 
A finite intersection of quasi-compact open subsets of $X$ is again a quasi-compact open subset of $X$ and the set of all quasi-compact open subsets of $X$ forms a basis of $X$. 
\item
Any nonempty irreducible closed subspace $Y$ of $X$ has a unique generic point, i.e., $\exists~ \eta \in Y$ such that $\overline{\{\eta\}}=Y$. 
\end{enumerate}

For a commutative ring $A$, it is clear that $\Spec A$ is, as a topological space, a spectral space. Hochster proved that for a given spectral space $X$, there exists a commutative ring $A$ such that $\Spec A$ is homeomorphic to $X$. Hochster further proved that this construction is functorial. 

\end{mydef}

 Recently, Finocchiaro developed a new criterion involving {\it ultrafilters} to characterize spectral spaces \cite{F}. We first recall the definition of ultrafilters before stating the criterion.

\begin{mydef}\label{filter}
A nonempty collection $\mathcal{F}$ of subsets
of a given set $X$ is called a \emph{filter} on $X$ if the following properties hold\footnote{Strictly speaking, the definition of filters that we provide here is commonly called proper filters.}:
\begin{enumerate}
\item 
$\emptyset \notin \mathcal{F}$.	
\item 
If $Y,Z \in \mathcal{F}$, then $Y\cap Z \in \mathcal{F}$.
\item 
If $Z \in \mathcal{F}$ and $Z\subseteq Y \subseteq X$, then $Y \in \mathcal{F}$.
\end{enumerate}

A filter $\mathcal{F}$ is said to be an \emph{ultrafilter} if for each $Y\subseteq X$, either $Y \in \mathcal{F}$ or $X \setminus Y \in \mathcal{F}$. We shall denote an ultrafilter by $\mathcal{U}$.
\end{mydef}

For further details and examples of filters, see, for instance, \cite{Je}.

The following two results provide a way to produce spectral spaces by using ultrafilters.

\begin{mythm}\cite[Corollary 3.3]{F}\label{2.2}
Let $X$ be a topological space. The following conditions are equivalent:
\begin{enumerate}
\item 
$X$ is a spectral space.
\item 
$X$ satisfies the $T_0$-axiom and there is a subbasis $\mathbb{S}$ of $X$ such that 
\begin{equation*}
X_{\mathbb{S}}({\mathcal{U}}):= \{x \in X\ |\ [\forall S\in \mathbb{S}, x \in S \iff S \in \mathcal{U}]\} \neq \emptyset
\end{equation*}
for any ultrafilter $\mathcal{U}$ on $X$.
\end{enumerate}
\end{mythm}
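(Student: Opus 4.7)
The plan is to reformulate condition (2) in terms of the \emph{constructible topology} $X^{\mathrm{cons}}$ on the underlying set of $X$, namely the topology generated by the subbasis $\mathbb{S} \cup \{X \setminus S : S \in \mathbb{S}\}$. The key elementary observation is that condition (2) is exactly equivalent to $X^{\mathrm{cons}}$ being compact: by Bourbaki's ultrafilter criterion, a topological space is compact if and only if every ultrafilter converges, and an ultrafilter $\mathcal{U}$ on $X$ converges to $x$ in $X^{\mathrm{cons}}$ precisely when every subbasic open of the form $S$ or $X \setminus S$ containing $x$ lies in $\mathcal{U}$, i.e.\ when $x \in S \Leftrightarrow S \in \mathcal{U}$ for every $S \in \mathbb{S}$. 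Hence $X_{\mathbb{S}}(\mathcal{U})$ is exactly the set of $X^{\mathrm{cons}}$-limits of $\mathcal{U}$, and its nonemptiness for every $\mathcal{U}$ is compactness of $X^{\mathrm{cons}}$.

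For $(1)\Rightarrow (2)$, I would take $\mathbb{S}$ to be the collection of all quasi-compact open subsets of $X$, which is a basis by hypothesis. By Alexander's subbasis theorem, compactness of $X^{\mathrm{cons}}$ reduces to checking that every cover by subbasic opens admits a finite subcover, i.e.\ given quasi-compact opens $\{U_\alpha\}$ and $\{V_\beta\}$ with $\bigcap_\beta V_\beta \subseteq \bigcup_\alpha U_\alpha$, one must find finite subfamilies achieving the same inclusion. Supposing no such finite subcover exists, the closed sets $(X \setminus \bigcup_{\alpha\in F} U_\alpha) \cap \bigcap_{\beta\in G} V_\beta$, indexed by finite $F,G$, have the finite intersection property; using that quasi-compact opens are closed under finite intersection one passes to the irreducible components of an appropriate minimal closed set and appeals to the existence of generic points to extract a single point in $\bigcap_\beta V_\beta$ lying outside every $U_\alpha$, contradicting the covering assumption.

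For $(2)\Rightarrow (1)$, assume $X$ is $T_0$ and $X^{\mathrm{cons}}$ is compact. Quasi-compactness of $X$ follows immediately since the identity $X^{\mathrm{cons}} \to X$ is continuous. Replacing $\mathbb{S}$ by its closure $\mathbb{S}'$ under finite intersections preserves the ultrafilter property and produces a genuine basis of $X$; each member of $\mathbb{S}'$ is clopen in $X^{\mathrm{cons}}$, hence compact there, hence quasi-compact in $X$ via the continuous identity, so $\mathbb{S}'$ is a basis of quasi-compact opens closed under finite intersections. For generic points, given an irreducible closed $Y \subseteq X$, form the filter generated by $\{Y\} \cup \{S \cap Y : S \in \mathbb{S}',\ S \cap Y \neq \emptyset\}$, which has the finite intersection property by irreducibility of $Y$, and refine it to an ultrafilter $\mathcal{U}$. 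The point $\eta$ supplied by condition (2) then satisfies $\eta \in S \Leftrightarrow S \cap Y \neq \emptyset$ for every $S \in \mathbb{S}'$; a short argument writing $X \setminus Y$ as a union of basic opens shows $\eta \in Y$, and writing any open $U$ meeting $Y$ as a union of basic opens in $\mathbb{S}'$ shows $\eta \in U$, so $\overline{\{\eta\}} = Y$, with uniqueness coming from $T_0$.

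The main obstacle is the generic-point construction in the converse direction: one must design the filter so that the subbasic membership characterization of $\eta$ propagates correctly to the full open topology, yielding both $\eta \in Y$ and $Y \subseteq \overline{\{\eta\}}$. Irreducibility of $Y$ is precisely what is needed to secure the finite intersection property at the basis level, and the reduction to a basis closed under finite intersection is essential so that subbasic information assembles to information about arbitrary opens. The forward direction is comparatively routine once one adopts the constructible-topology viewpoint, though even there the extraction of a contradicting point requires the spectral structure nontrivially.
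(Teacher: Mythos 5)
The paper does not prove Theorem~\ref{2.2}: it is quoted from Finocchiaro~\cite{F} (Corollary~3.3) and used as a black box, so the only question is whether your argument is internally sound. Your identification of $X_{\mathbb{S}}(\mathcal{U})$ with the set of $X^{\mathrm{cons}}$-limits of $\mathcal{U}$, and hence of the ultrafilter condition with compactness of the constructible topology, is exactly right. Your $(2)\Rightarrow(1)$ direction is essentially complete: closing $\mathbb{S}$ under finite intersections, observing continuity of the identity $X^{\mathrm{cons}}\to X$, using clopenness in $X^{\mathrm{cons}}$ to get quasi-compactness of basic opens in $X$, and the filter-refinement construction of generic points are all correct. (You might add one sentence: a basis of quasi-compact opens closed under finite intersections forces the collection of \emph{all} quasi-compact opens to be closed under finite intersections, since any quasi-compact open is a finite union of basic ones.)

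The gap is in $(1)\Rightarrow(2)$. The sets $\bigl(X \setminus \bigcup_{\alpha\in F} U_\alpha\bigr) \cap \bigcap_{\beta\in G} V_\beta$ that you call ``closed'' are not closed (each $V_\beta$ is open), and the finite intersection property for a family whose members are not all closed does not, even in a quasi-compact space, yield a point in the total intersection; so one cannot simply ``pass to the irreducible components of an appropriate minimal closed set.'' The workable version proceeds in two stages. First, for each finite $G$, quasi-compactness of $W_G := \bigcap_{\beta\in G}V_\beta$ together with the FIP of the closed-in-$W_G$ family $\{W_G\setminus U_\alpha\}$ gives a point of $W_G\cap Z$, where $Z=\bigcap_\alpha(X\setminus U_\alpha)$ is genuinely closed. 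Second, $Z$ is a closed, hence spectral, subspace, and the sets $V_\beta\cap Z$ are quasi-compact opens of $Z$ with all finite intersections nonempty; after enlarging $\{V_\beta\}$ to be closed under finite intersections (this is load-bearing), a Zorn argument inside $Z$ produces a closed $C\subseteq Z$ minimal among closed sets meeting every $V_\beta$, minimality and closure under intersection force $C$ to be irreducible (not merely to have irreducible components), and sobriety then supplies a generic point $\eta\in C$ lying in every $V_\beta$ and outside every $U_\alpha$, the desired contradiction. Your sketch gestures at these ingredients but skips the reduction to $Z$, the closure of the family under finite intersection, and the Zorn step; as written, the first appeal to compactness is a false shortcut.
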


\subsection{Closure operations}

In this subsection, we first recall the notion of closure operation for any partially ordered set. We then recall some classical examples of closure operations arising in commutative algebra from \cite{epstein2012guide}.

\begin{mydef}\label{clo-op}
	Let $P$ be a partially ordered set. A {\it closure operation} on $P$ is a map $\textrm{cl}:P\longrightarrow P$ such that
	\begin{enumerate}
		\item
		(Extension) $ x\leq cl(x)$ for all $x\in P$. 
		\item 
		(Idempotence) $\textrm{cl}(x)=\textrm{cl}(\textrm{cl}(x))$ for all $x \in P$. 
		\item
		(Order-preservation) $x\leq y\implies \textrm{cl}(x) \leq \textrm{cl}(y)$ for all $x, y \in P$. 
	\end{enumerate} 
\end{mydef}

Let $A$ be a commutative ring and let $\mathcal{I}\subseteq 2^A$ be the set of all ideals of $A$. Clearly, $\mathcal{I}$ is partially ordered by inclusion. Here are some examples of closure operations on $\mathcal{I}$. For notational convenience, we let $\textrm{cl}(I):=I^\textrm{cl}$ for $I\in \mathcal{I}$.

\begin{myeg}\cite[Example 2.1.2]{epstein2012guide}
Let $\mathcal{I}$ be the set of all ideals of a commutative ring $A$.  
\begin{enumerate}
\item 
The function $\textrm{cl}:\mathcal{I} \to \mathcal{I}$, sending $I$ to $I^\textrm{cl}:=I$, is a closure operation, called the \emph{identity closure}. 
\item 
The function $\textrm{cl}:\mathcal{I} \to \mathcal{I}$, sending $I$ to $I^\textrm{cl}:=\sqrt{I}$ is a closure operation, called the \emph{radical closure}. 
\item 
For an ideal $I$ of $A$, an element $a \in A$ is said to be integral over $I$, if there exist $n \in \mathbb{N}$ and $a_i \in I^i$ for $i=1,2,...,n$ such that 
\[
a^n+a_1a^{n-1}+a_2a^{n-2}+\cdots + a_{n-1}a + a_n =0.
\]
We let $I^\textrm{cl}$ be the set of elements in $A$ which are integral over $I$. Then, the function $\textrm{cl}:\mathcal{I} \to \mathcal{I}$, sending $I$ to $I^\textrm{cl}$ is a closure operation, called the \emph{integral closure}. 
\item 
Suppose that $A$ is of characteristic $p >0$. For an ideal $I$, we define $I^{[p^n]}$ to be the ideal generated by all the $p^n$th power of elements of $I$. Then,
\[
I^\textrm{cl}:= \{x\in A~\mid~\exists~ n \in \mathbb{N} \text{~such that~} x^{p^n} \in  I^{[p^n]}\}
\]
defines a closure operation, called the \emph{Frobenius closure}. 
\end{enumerate}
\end{myeg}

We will provide interesting examples of closure operations arising from idempotent semirings in \S \ref{section: closure operations for semirings}. In particular, we will introduce an integral closure operation and Frobenius closure operation for idempotent semirings, which could be seen as ``characteristic one'' analogues of the case of rings. We also would like to highlight the fact that one can construct a closure operation of finite type from a given closure operation (see, \cite[Construction 3.1.6]{epstein2012guide}), and a closure operation of finite type naturally produces spectral spaces in our case. See, Proposition \ref{proposition: of fnite type}.

\section{Spectral spaces arising from semirings}\label{section: section3}

In this section, we explore spectral spaces arising from semirings. In the first subsection, we present spectral spaces obtained from certain collections of ideals of a semiring. We then prove that any spectral space is homeomorphic to the \emph{prime $k$-spectrum} (Definition \ref{k-spec}) of an idempotent semiring (we provide explicit construction). This could be viewed as an idempotent semiring analogue of Hochster's theorem. However, our proof does not  mimic Hochster's proof since many of Hochster's constructions fail to hold for the case of idempotent semirings. Our strategy appeals to the well known relation between spectral spaces and bounded distributive lattices. In fact, in the process of our proof, we enrich Hochster’s theorem by constructing a subcategory of idempotent semirings (\emph{radical idealic semirings}) which is antiequivalent to the category of spectral spaces. In the second subsection, we illustrate several spectral spaces constructed from sets of congruences on a semiring. \vspace{2mm}

In \cite{hochster1969prime}, Hochster introduced the notion of patch topology for a spectral space $X$:  if $X$ is a spectral space with topology $\tau$, then the patch topology on $X$ is the topology $\tau'$ whose sub-basis for closed sets are the closed sets and quasi-compact open sets of $(X,\tau)$. Hochster proved that any patch closed subset of a spectral space is spectral \cite[Proposition 9]{hochster1969prime}. We will frequently use this fact to prove sets of ideals (or congruences) of an idempotent semiring are spectral spaces by realizing our space of interest as a patch closed subset of a spectral space. 

\subsection{Spectral spaces arising from ideals}

Recall the definition of a $k$-ideal of a semiring $A$ (Definition \ref{definition: saturated ideal}). Equivalently, an ideal $I$ of $A$ is a $k$-ideal if $x+y\in I$ implies either $x,y \in I$ or $x,y\notin I$.
We can explicitly describe the smallest $k$-ideal containing a given ideal by the following proposition.\footnote{The proof of Proposition \ref{saturation} is essentially same as that of \cite[Lemma 2.2]{senadhikari1992}, where the authors prove the same result for specific semirings.}

\begin{pro}\label{saturation}
	Let $A$ be a semiring and let $I$ be an ideal of $A$.  Then the smallest $k$-ideal containing $I$ is $\{x\in A\mid\exists y\in I\textrm{ such that }x+y\in I\}$.
\end{pro}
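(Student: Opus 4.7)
The plan is to denote $J := \{x \in A \mid \exists y \in I \text{ such that } x + y \in I\}$ and verify four things in order: (i) $J$ is an ideal, (ii) $I \subseteq J$, (iii) $J$ is a $k$-ideal, and (iv) $J$ is contained in every $k$-ideal that contains $I$. Together these say $J$ is the smallest $k$-ideal containing $I$.

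For (i), I would check that $J$ is closed under addition by using the witness-pairing trick: if $x_1, x_2 \in J$ with witnesses $y_1, y_2 \in I$ satisfying $x_i + y_i \in I$, then $(x_1 + x_2) + (y_1 + y_2) = (x_1 + y_1) + (x_2 + y_2) \in I$ and $y_1 + y_2 \in I$, so $x_1 + x_2 \in J$. Absorption $AJ \subseteq J$ is equally direct: multiplying $x + y \in I$ and $y \in I$ by $a \in A$ produces $ax + ay \in I$ with $ay \in I$. For (ii), if $x \in I$ then $y = 0 \in I$ gives $x + y = x \in I$.

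The main content is (iii), the $k$-ideal property. Suppose $x \in J$ and $x + y \in J$; I want $y \in J$. Unwinding the definition, there exist $z_1, z_2 \in I$ with $x + z_1 \in I$ and $(x + y) + z_2 \in I$. The key algebraic manipulation is to observe
\[
y + (x + z_1 + z_2) = (x + y + z_2) + z_1,
\]
whose right-hand side lies in $I$ because both summands do, while the element $x + z_1 + z_2$ lies in $I$ (as the sum of $x + z_1 \in I$ and $z_2 \in I$). Thus $x + z_1 + z_2 \in I$ serves as a witness showing $y \in J$. This is the only step that actually uses something subtle, namely the fact that $I$ is an additive submonoid in a crucial compatible way.

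Finally, for (iv), let $K$ be any $k$-ideal with $I \subseteq K$, and let $x \in J$ with witness $y \in I$. Then $y \in K$ and $x + y \in I \subseteq K$, so the $k$-ideal property of $K$ forces $x \in K$. No step should require more than a line or two, and no obstacle of substance arises beyond the additive rearrangement in (iii).
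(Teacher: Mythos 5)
Your proof is correct and follows essentially the same route as the paper's: both verify that $J$ is an ideal via the witness-pairing trick, establish the $k$-ideal property by producing the witness $x + z_1 + z_2$ (the paper's $x+y+w$), and conclude minimality from the $k$-ideal property of any containing $k$-ideal. The only difference is that you spell out the intermediate rearrangements slightly more explicitly than the paper does.
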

\begin{proof}
	Let $J=\{x\in A\mid\exists y\in I\textrm{ such that }x+y\in I\}$.  It is clear that $I\subseteq J$ and for any $k$-ideal  $N$ with $I\subseteq N$ we have $J\subseteq N$.  It is also clear that $J$ is closed under multiplication.  Let $x, x'\in J$.  Then there exist $y, y'\in I$ such that $x+y, x'+y'\in I$.  Then $y+y'\in I$ and $(x+x')+(y+y')\in I$ and so $x+x'\in J$.  Thus $J$ is an ideal.  Let $x\in J$ and suppose $z\in A$ satisfies $x+z\in J$.  Then there exist $y, w$ such that $y, x+y, w, x+z+w\in I$.  Consequently, $x+y+w, x+y+z+w\in I$ and so $z\in J$. This shows that $J$ is a $k$-ideal.
\end{proof}

\begin{mydef}\label{addmul}
	Let $A$ be a semiring, and $I$, $J$ be $k$-ideals of $A$. 
	\begin{enumerate}
		\item 
		The sum $I+J$ of $k$-ideals is the smallest $k$-ideal containing $I$ and $J$.
		\item 
		The product of $k$-ideals $IJ$ is the smallest $k$-ideal containing $\{xy \mid x\in I, y\in J\}$.
	\end{enumerate}	
\end{mydef}

\begin{rmk}
	We remark that Definition \ref{addmul} is well defined as the intersection of arbitrary $k$-ideals is a $k$-ideal.
\end{rmk}

Let $A$ be a semiring and $\mathcal{I}$ be the set of all ideals of $A$. One can easily observe that $\mathcal{I}$ is equipped with the semiring structure, where addition is the sum $I+J$ of two ideals, and multiplication is the product $IJ$ of two ideals. We note that the sum or product of two ideals depends on whether we view them as $k$-ideals or just ideals.  In other words, the semiring of $k$-ideals is not a subsemiring of the semiring of all ideals\footnote{However, it is the quotient obtained by identifying two ideals $I$ and $J$ if the smallest $k$-ideal containing $I$ is the same as the smallest $k$-ideal containing $J$.}

\begin{mydef}
	Let $A$ be a semiring
	\begin{enumerate}
		\item 
		A \emph{prime $k$-ideal} of $A$ is a $k$-ideal $\mathfrak{p}$ of $A$ such that if $k$-ideals $I,J$ satisfy $IJ\subseteq \mathfrak{p}$, then $I\subseteq \mathfrak{p}$ or $J\subseteq \mathfrak{p}$ (here the product $IJ$ is the product of $k$-ideals).
		\item 
		The \emph{radical} $\sqrt{I}$ of a $k$-ideal $I$ is the intersection of all the prime $k$-ideals containing $I$.
	\end{enumerate}
\end{mydef}

\noindent If $A$ is a ring, then any ideal of $A$ is a $k$-ideal, and hence prime $k$-ideals are the same as prime ideals for a ring $A$. Also, a radical ideal of a ring $A$ is a $k$-ideal by being an intersection of prime ideals (and hence $k$-ideals). We will now show that for a semiring $A$, prime $k$-ideals are the same as prime ideals which are also $k$-ideals.

\begin{pro}
	Let $A$ be a semiring and $\mathfrak{p}$ be an ideal of $A$. Then, $\mathfrak{p}$ is a prime $k$-ideal of $A$ if and only if $\mathfrak{p}$ is a prime ideal of $A$ which is also a $k$-ideal.
	
\end{pro}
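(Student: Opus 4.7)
The backward direction is essentially immediate: if $\mathfrak{p}$ is a prime ideal that is also a $k$-ideal and $I,J$ are $k$-ideals with $IJ\subseteq\mathfrak{p}$, suppose for contradiction that $a\in I\setminus\mathfrak{p}$ and $b\in J\setminus\mathfrak{p}$. The element $ab$ lies in the generating set $\{xy:x\in I,\, y\in J\}$ of $IJ$, so $ab\in IJ\subseteq\mathfrak{p}$; primeness of $\mathfrak{p}$ then forces $a\in\mathfrak{p}$ or $b\in\mathfrak{p}$, a contradiction. Hence $I\subseteq\mathfrak{p}$ or $J\subseteq\mathfrak{p}$.

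For the forward direction, assume $\mathfrak{p}$ is a prime $k$-ideal (so $\mathfrak{p}$ is already a $k$-ideal by definition) and suppose $ab\in\mathfrak{p}$. The strategy is to feed the prime $k$-ideal condition the smallest $k$-ideals $I_a, I_b$ containing $a$ and $b$, which by Proposition~\ref{saturation} are given explicitly by $I_a=\{x\in A: x+ra=r'a \text{ for some } r,r'\in A\}$, and analogously for $I_b$. The technical heart of the argument is to show
$$\{xy:x\in I_a,\, y\in I_b\}\subseteq\mathfrak{p}.$$
Once this holds, the set on the left lies in the $k$-ideal $\mathfrak{p}$, so the minimality of $I_a I_b$ forces $I_aI_b\subseteq\mathfrak{p}$; the prime $k$-ideal hypothesis then yields $I_a\subseteq\mathfrak{p}$ or $I_b\subseteq\mathfrak{p}$, i.e., $a\in\mathfrak{p}$ or $b\in\mathfrak{p}$.

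To verify the key containment, fix $x\in I_a$ and $y\in I_b$ with $x+ra=r'a$ and $y+sb=s'b$. Multiplying the first relation by $sb$ gives $xsb+rsab=r'sab$; since $ab\in\mathfrak{p}$, both $rsab$ and $r'sab$ belong to $\mathfrak{p}$, and the $k$-ideal property extracts $xsb\in\mathfrak{p}$. An identical argument after multiplying $y+sb=s'b$ by $ra$ gives $ray\in\mathfrak{p}$. Expanding $(x+ra)(y+sb)=r's'ab$ produces the identity
$$xy+(xsb+ray+rsab)=r's'ab,$$
whose right-hand side and parenthesised summand both lie in $\mathfrak{p}$; a final invocation of the $k$-ideal property delivers $xy\in\mathfrak{p}$. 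The main (and only nontrivial) obstacle is precisely in this step: the absence of subtraction in semirings prevents us from cancelling unwanted summands from $(x+ra)(y+sb)=r's'ab$, so we must manufacture witnesses in $\mathfrak{p}$ via the intermediate multiplications by $sb$ and $ra$ and then peel them off using the $k$-ideal axiom.
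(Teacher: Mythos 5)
Your proof is correct. Both the paper and you rely on Proposition \ref{saturation} to write $x+ra=r'a$ and $y+sb=s'b$ and then use the $k$-ideal axiom to peel off unwanted summands, but the forward direction is organized differently. The paper proves the standalone identity $\angles{xy}=\angles{x}\angles{y}$ (as $k$-ideals) in two successive single-variable steps — first showing $zy\in\angles{xy}$ for $z\in\angles{x}$, then showing $zw\in\angles{xy}$ for $w\in\angles{y}$ using the first step — and afterwards specializes $\angles{ab}\subseteq\mathfrak{p}$. You instead fix the prime $\mathfrak{p}$ from the outset and expand the full product $(x+ra)(y+sb)=r's'ab$, eliminating the three cross-terms $xsb$, $ray$, $rsab$ inside $\mathfrak{p}$ via the auxiliary multiplications and $ab\in\mathfrak{p}$. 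The computations are of the same flavour; the paper's route has the mild advantage of isolating the reusable and slightly stronger identity $\angles{xy}=\angles{x}\angles{y}$, while yours is a bit more direct and self-contained for this one statement (and uses one more peel-off than the paper's two-step route, since one could instead multiply $y+sb=s'b$ by $x$ and cancel $xsb$ against $xs'b$, both of which your Step~1$'$ argument places in $\mathfrak{p}$). The backward direction is the paper's argument stated contrapositively.
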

\begin{proof}
	For any $x\in A$, let $\angles{x}$ denote the smallest $k$-ideal containing $x$. We claim that $\angles{xy}=\angles{x}\angles{y}$ for any $x,y\in A$. First, if $z \in \angles{x}$, then $zy \in \angles{xy}$.  In fact, there exist $a,b \in A$ such that $ax+z=bx$ (by Proposition \ref{saturation}) and so $axy+zy=bxy$. Thus, $zy\in \angles{xy}$ for any $z\in \angles{x}$. Now pick $w\in \angles{y}$ and choose $c,d\in A$ such that $cy+w=dy$. Then, $cyz+wz=dyz$ and so $wz\in \angles{xy}$ (since $yz\in \angles{xy}$).  This establishes the claim. \\
	Now, let $\mathfrak{p}$ be a prime $k$-ideal of $A$.  For $x,y \in A$, if $xy \in \mathfrak{p}$, then $\angles{xy}=\angles{x}\angles{y}\subseteq \mathfrak{p}$ and so either $\angles{x}\subseteq \mathfrak{p}$ or $\angles{y}\subseteq \mathfrak{p}$. In other words, we have $x \in \mathfrak{p}$ or $y \in \mathfrak{p}$. \\
	Conversely, suppose $\mathfrak{p}$ is a prime ideal of $A$ which is also a $k$-ideal. Let $I$ and $J$ be $k$-ideals such that $IJ\subseteq \mathfrak{p}$. If $I\nsubseteq \mathfrak{p}$, then there exists some $x\in I$ such that $x\notin \mathfrak{p}$. But, by assumption, $xy\in \mathfrak{p}$ for all $y\in J$. Since $\mathfrak{p}$ is a prime ideal, we must have $J\subseteq \mathfrak{p}$. 
\end{proof}

We will now present certain collections of ideals of a semiring that form spectral spaces. For this, we first recall that given a set $S$, the power set $2^S$ is a spectral space endowed with the  \emph{hull-kernel topology} whose open sub-basis is given by the sets of the form $$D(F):= \{I \in 2^S \mid F \not \subseteq I\},$$ where $F$ is a finite subset of $S$ (see, \cite[Theorem 8 and Proposition 9]{hochster1969prime}). We will denote by $V(F)$ the complement of $D(F)$ in $2^S$.

\begin{pro}\label{sat prime spec}
Let $A$ be a semiring. 
\begin{enumerate}
\item 
The collection of all ideals and the collection of all proper ideals of $A$ are spectral spaces with the hull-kernel topology.
\item 
The collection of all prime ideals $\Spec A$ of $A$ is a spectral space with the hull-kernel topology.
\item
The collection of all $k$-ideals of $A$ is a spectral space with the hull-kernel topology.
\item
The collection of all prime $k$-ideals of $A$ is a spectral space with the hull-kernel topology.
\end{enumerate}
\end{pro}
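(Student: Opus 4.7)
The plan is to realize each of the four collections as a patch-closed subset of the power set $2^A$, which is a spectral space under the hull-kernel topology, and then invoke Hochster's theorem that patch-closed subsets of spectral spaces are spectral. The hull-kernel topology on each of these collections is just the subspace topology inherited from $2^A$, so this automatically gives the desired conclusion.

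First I would recall that in $2^A$ the sub-basic sets $D(F) = \{I \mid F \not\subseteq I\}$ (for $F \subseteq A$ finite) are quasi-compact open and their complements $V(F)$ are closed; both families are patch-closed, and the patch-closed sets are stable under arbitrary intersections and finite unions. The strategy is then to write each defining axiom as a Boolean combination of conditions of the form ``$x \in I$'' (i.e.\ $V(\{x\})$) and ``$x \notin I$'' (i.e.\ $D(\{x\})$), one for each relevant tuple of elements of $A$, and observe that these are all patch-closed.

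For part (1), the set of ideals is
\[
\{I : 0 \in I\}\ \cap\ \bigcap_{x,y \in A}\bigl(D(\{x\}) \cup D(\{y\}) \cup V(\{x+y\})\bigr)\ \cap\ \bigcap_{a,x \in A}\bigl(D(\{x\}) \cup V(\{ax\})\bigr),
\]
which encodes $0 \in I$, closure under addition, and closure under multiplication by $A$. Each factor is a finite union of patch-closed sets, hence patch-closed, and the intersection over all $x, y, a$ is patch-closed. The set of proper ideals is obtained by further intersecting with $D(\{1\})$. For part (2), prime ideals are cut out by additionally imposing, for each pair $x,y$, the patch-closed condition $D(\{xy\}) \cup V(\{x\}) \cup V(\{y\})$. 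For part (3), the $k$-ideal condition $x,\,x+y \in I \Rightarrow y \in I$ translates to the patch-closed set $D(\{x\}) \cup D(\{x+y\}) \cup V(\{y\})$ for each pair $(x,y)$; intersecting this with the set of ideals gives the $k$-ideals. Part (4) is the intersection of the sets from (2) and (3).

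Since each of these subsets is patch-closed in the spectral space $2^A$, each is a spectral space by \cite[Proposition 9]{hochster1969prime}. The only point to verify is that the subspace topology from $2^A$ coincides with the hull-kernel topology defined directly on each collection; but this is immediate, because the sub-basic opens $D(F)$ (intersected with the subcollection in question) are precisely the standard sub-basic opens of the hull-kernel topology. The main (minor) obstacle is not conceptual but notational: one must be careful to express each axiom purely as a union of sub-basic patch-closed pieces so that the whole axiom becomes patch-closed, rather than writing it in a form (such as an implication collapsed into a single intersection) that would only show it is closed in the original topology and not necessarily patch-closed.
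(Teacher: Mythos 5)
Your proposal is correct and takes essentially the same route as the paper: realize each collection as a patch-closed subset of the spectral space $2^A$ under the hull-kernel topology, then invoke Hochster's result that patch-closed subsets of spectral spaces are spectral. The paper encodes the $k$-ideal axiom via the equivalent symmetric form ``$x+y\in I$ implies either both $x,y\in I$ or both $x,y\notin I$'' (so its Boolean expression uses $D(a+b)\cup[V(a)\cap V(b)]\cup[D(a)\cap D(b)]$) rather than the implication form you use, but this is cosmetic; the one small thing to make explicit in part (2) is that primality must be imposed on top of the set of \emph{proper} ideals (i.e.\ also intersect with $D(\{1\})$), since a prime ideal is required to be proper.
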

\begin{proof}
The power set $2^A$ of $A$ is a spectral space with the hull-kernel topology. Let $X_1$ denote the collection of all ideals of $A$. Clearly, $X_1$ is a subset of $2^A$. Endowed with the induced hull-kernel topology $X_1$ can be written as follows
\[
X_1=[\underset{a,b\in A}\cap D(a)\cup V(ab)] \cap [\underset{a,b\in A}\cap D(a) \cup D(b) \cup V(a+b)] \cap V(0). 
\]
This shows that $X_1$ is patch closed in $2^A$ and therefore it is a spectral space. Let $X_2$ denote the collection of all proper ideals. Then, we have 
\[
X_2= X_1\cap D(1)
\]
which is clearly a patch closed subset of $2^A$ and is therefore spectral. 
Let $X_3$ and $X_4$ denote the collection of all prime ideals and the collection of all $k$-ideals respectively. Then, we have

\[
X_3=X_2\cap [\underset{a,b\in A}\cap D(ab)\cup V(a)\cup V(b)]
\]

\[
X_4=X_2\cap[\underset{a,b\in A\times A}\cap D(a+b)\cup [V(a)\cap V(b)]\cup [D(a)\cap D(b)]]
\]
which are patch closed subsets of $2^A$ and are therefore spectral. The collection of all prime $k$-ideals is given by the patch closed subset
$X_3\cap X_4$. 
\end{proof}

\begin{mydef}\label{k-spec}
	The {\it prime $k$-spectrum} of a semiring $A$ is the collection of all prime $k$-ideals of $A$ endowed with the hull-kernel topology and we denote it by $Spec_k(A)$.
\end{mydef}

Let $A$ be a semiring and $\mathcal{I}$ be the poset of all ideals of $A$. We now consider closure operations on $\mathcal{I}$ and present spectral spaces related to that.

\begin{mydef}\label{cl op on ideals}
Let $A$ be a semiring and $\mathcal{I}$ be the poset of all ideals of $A$. A closure operation 
\[
cl: \mathcal{I}\longrightarrow \mathcal{I}\qquad I\mapsto I^{cl}
\]	
is said to be {\it of finite type} if 
	$I^{cl} = \bigcup \{J^{cl}\ |\  J\subseteq I,\  J \in \mathcal{I},\  J  \text{  is finitely generated } \}.$
	
\end{mydef}

We will discuss many interesting examples of finite type closure operations on ideals of an idempotent semiring in Section \ref{section: closure operations for semirings}. We now show that the fixed points of any finite type closure operation on $\mathcal{I}$ gives rise to a spectral space. The proof of this is analogues to the proof of \cite[Proposition 3.4]{Fo}, but we include it here for completeness.

\begin{pro}\label{proposition: fnite type and spectral space}
Let $A$ be a semiring and $cl$ be a closure operation of finite type on $\mathcal{I}$, the poset of all ideals of $A$. Then the following set
	\[
	X:=\{I \in \mathcal{I} \mid I^{cl}=I\}
	\]
	is a spectral space. 
\end{pro}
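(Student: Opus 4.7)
The plan is to realize $X$ as a patch closed subset of the spectral space $2^A$ endowed with the hull-kernel topology (cf.\ Proposition \ref{sat prime spec}). Since $\mathcal{I}$ itself is patch closed in $2^A$ and hence spectral, and since patch closed subsets of spectral spaces are again spectral by \cite[Proposition 9]{hochster1969prime}, this will suffice.

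First, I would use the finite type hypothesis to reformulate the condition $I \in X$. Because $I \subseteq I^{cl}$ always, one has $I^{cl} = I$ if and only if $I^{cl} \subseteq I$. By the finite type assumption, this is equivalent to requiring that $J^{cl} \subseteq I$ for every finitely generated ideal $J \subseteq I$; equivalently, for every finite subset $F \subseteq A$ with $F \subseteq I$ and every $b \in \angles{F}^{cl}$, one has $b \in I$. In contrapositive form, $I \in X$ if and only if for every finite $F \subseteq A$ and every $b \in \angles{F}^{cl}$, either $F \not\subseteq I$ or $b \in I$.

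Next, I would translate this into the language of the sub-basic sets of $2^A$. With the notation of Proposition \ref{sat prime spec}, writing $D(F) = \{I \mid F \not\subseteq I\}$ and $V(b) = \{I \mid b \in I\}$, the reformulation above gives
\[
X \;=\; \mathcal{I} \;\cap\; \bigcap_{F,\, b} \bigl(D(F) \cup V(b)\bigr),
\]
where $F$ ranges over finite subsets of $A$ and $b$ over the elements of $\angles{F}^{cl}$. For each such pair, $D(F) = \bigcup_{a \in F} D(\{a\})$ is a finite union of quasi-compact opens of $2^A$, and hence is itself a quasi-compact open, so in particular patch closed; while $V(b)$ is hull-kernel closed, and hence also patch closed. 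Therefore each $D(F) \cup V(b)$ is patch closed, and an arbitrary intersection of patch closed sets remains patch closed.

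Combining this with the patch closedness of $\mathcal{I}$ in $2^A$, we conclude that $X$ is patch closed in $2^A$, and hence spectral. The crux of the argument, and the only place the hypothesis enters, is the finite type condition: it is precisely what permits us to index the intersection by \emph{finite} subsets $F$, so that each $D(F)$ is quasi-compact open. Without finite type, one would be forced to range over arbitrary sub-ideals $J \subseteq I$, and the resulting description of $X$ would not be manifestly patch closed; this is the only real obstacle, and it is defused by the very definition of a finite type closure.
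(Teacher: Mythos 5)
Your proof is correct, but it follows a genuinely different route from the paper's. The paper invokes Finocchiaro's ultrafilter criterion (Theorem \ref{2.2}): given an ultrafilter $\mathcal{U}$ on $X$, one forms the candidate ideal $I_{\mathcal{U}} := \{x \in A \mid V(x)\cap X \in \mathcal{U}\}$, checks it lies in $X_{\mathbb{S}}(\mathcal{U})$, and uses the finite type hypothesis precisely at the step of showing $I_{\mathcal{U}}^{cl}\subseteq I_{\mathcal{U}}$ — if $x \in I_{\mathcal{U}}^{cl}$, finite type produces a finitely generated $I'\subseteq I_{\mathcal{U}}$ with $x\in (I')^{cl}$, yielding the inclusion $\bigcap_i V(a_i)\cap X \subseteq V(x)\cap X$ that the ultrafilter then turns into membership. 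You instead bypass ultrafilters entirely and exhibit $X$ directly as a patch closed subset of $2^A$: using extension, $I\in X$ iff $I^{cl}\subseteq I$, which by finite type unwinds to ``for every finite $F\subseteq A$ and every $b\in \angles{F}^{cl}$, $I\in D(F)\cup V(b)$,'' giving $X = \mathcal{I}\cap\bigcap_{F,b}\bigl(D(F)\cup V(b)\bigr)$ with each term patch closed. This is a clean and correct argument; in fact it is closer in spirit to the rest of Section \ref{section: section3} (compare the proofs of Propositions \ref{sat prime spec} and \ref{proposition: subcongruences}, which also proceed via patch closedness in $2^A$ or $2^{A\times A}$), whereas the paper's proof of this particular statement deliberately mirrors the ultrafilter technique of \cite[Proposition 3.4]{Fo}. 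The ultrafilter method is a touch more portable when the subset in question is not readily expressed as an intersection of sub-basic constructibles, but for this proposition both roads lead to the same place; yours additionally isolates the role of the finite type hypothesis as \emph{exactly} what makes the index set of the intersection range over finite $F$, so that each $D(F)$ is quasi-compact open rather than merely open.
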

\begin{proof}
Let $\mathcal{U}$ be an ultrafilter on $X$ and $\mathbb{S}$ be the subbasis of $X$ which is induced from the hull-kernel topology of $2^{A}$. By Theorem \ref{2.2}, it is enough to prove that
\begin{equation*}
X_{\mathbb{S}}({\mathcal{U}})= \{I \in X\ |\ [\forall S\in \mathbb{S}, I \in S \iff S \in \mathcal{U}]\} \neq \emptyset.
\end{equation*}
Consider the set $I_{\mathcal{U}}:=\{x \in I \mid V(x)\cap X  \in \mathcal{U}\}$. We claim that $I_{\mathcal{U}} \in X_{\mathbb{S}}({\mathcal{U}})$. Using the properties of an ultrafilter, it can be easily verified that $I_{\mathcal{U}}$ satisfies the condition $I_{\mathcal{U}} \in S \iff S \in \mathcal{U}$ for all $S\in \mathbb{S}$. So, we are only left to check that $I_{\mathcal{U}}^{cl}\subseteq I_{\mathcal{U}}$. Suppose that $x \in I_{\mathcal{U}}^{cl}$. Since our closure operation $cl$ is of finite type, there is a finitely generated ideal $I' \subseteq I_{\mathcal{U}}$ such that $x\in (I')^{cl}$. It follows that $x \in J^{cl}$ for any ideal $J$ of $A$ containing $I'$. Hence, if $I'$ is finitely generated by $\{a_1,\hdots ,a_r\}$, we have that
\begin{equation}\label{inclusion1}
\cap_{i=1}^r V(a_i)\cap X = V(a_1,\hdots ,a_r) \cap X \subseteq V(x) \cap X. 
\end{equation}
By definition of $I_{\mathcal{U}}$, we have $ V(a_i)\cap X\in \mathcal{U}$ and since $\mathcal{U}$ is an ultrafilter, it follows from \eqref{inclusion1} that $x\in I_{\mathcal{U}}$.
\end{proof}

\begin{rmk}
Closure operations can also be considered for the poset of subsemimodules of a given semimodule $M$ over $A$. Proposition \ref{proposition: fnite type and spectral space} will also hold (proof is exactly the same) for the poset of subsemimodules of a given semimodule in place of poset of all ideals of $A$.
\end{rmk}

Inspired by Hochster's result, we now proceed to prove that for a spectral space $X$, one can find an idempotent semiring $A$ in such a way that the prime $k$-spectrum of $A$ is homeomorphic to $X$. We will appeal to the well-known fact in lattice theory that there exists one-to-one correspondence between spectral spaces and bounded distributive lattices to prove this. 

\subsubsection{$k$-ideals of idempotent semirings }

Let $A$ be an idempotent semiring. Then $A$ is equipped with a canonical partial order as follows: for $x,y \in A$, 
\[
x \leq y \iff x+y = y
\]
In particular, with this partial order, $0$ is the smallest element of $A$. Furthermore, this order is compatible with the multiplication of $A$, that is, if $x\leq y$ then $xz\leq yz$ for any $z \in A$. As a consequence, one can easily show that $x \leq y$ and $a \leq b$ imply that $ax \leq by$. In the case of idempotent semirings, it is well-known that $k$-ideals have the following simple description.

\begin{pro}
Let $A$ be an idempotent semiring. Then, an ideal $I$ of $A$ is a $k$-ideal if and only if for all $x\in I$ and $y\leq x$ it follows that $y\in I$.
\end{pro}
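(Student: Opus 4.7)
The plan is to unwind both implications directly from the definitions, using only (a) the characterization $y\leq x\iff y+x=x$ of the canonical order, (b) the $k$-ideal condition $x\in I,\ x+y\in I\Rightarrow y\in I$, and (c) idempotence $y+y=y$. No auxiliary machinery should be needed.

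For the forward implication, I would assume $I$ is a $k$-ideal and pick $x\in I$ together with $y\leq x$. By definition of the canonical order, $x+y=x$, so the element $x+y$ equals $x$ and in particular lies in $I$. Applying the $k$-ideal condition to the pair $(x,y)$ (we have $x\in I$ and $x+y\in I$) immediately yields $y\in I$. This direction actually does not require idempotence of $A$.

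For the converse, assume $I$ is downward closed with respect to $\leq$, and suppose $x\in I$ and $x+y\in I$; the goal is to show $y\in I$. Set $z:=x+y\in I$. Using idempotence in the form $y+y=y$, I compute
\[
y+z \;=\; y+(x+y)\;=\;x+(y+y)\;=\;x+y\;=\;z,
\]
which, by the definition of the canonical order, says precisely that $y\leq z$. Since $z\in I$ and $I$ is closed under passage to smaller elements, we conclude $y\in I$, as required.

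There is no real obstacle here; the only place where the hypothesis that $A$ is idempotent enters is in the converse, to produce the identity $y+z=z$ that witnesses $y\leq z=x+y$. The forward direction works for any semiring.
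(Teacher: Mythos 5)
Your proof is correct and follows essentially the same route as the paper's: in one direction you rewrite $x+y=x$ from $y\leq x$ and invoke the $k$-ideal condition, and in the other you observe (via idempotence) that any summand is $\leq$ the sum, so $y\leq x+y$ lets downward closure finish. The only difference is that you spell out the computation $y+(x+y)=x+y$ explicitly where the paper merely asserts $x\leq x+y$.
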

\begin{proof}
Suppose for all $x\in I$ and $y\leq x$ it follows that $y\in I$.  Let $y\in I$ and $x+y\in I$.  Then $x\leq x+y$ and so $x\in I$. Conversely, suppose $I$ is a $k$-ideal.  Let $x\in I$ and $y\leq x$.  Then, $x+y=x\in I$ and so $y\in I$.
\end{proof}

\begin{mydef}
A $k$-ideal $I\subseteq A$ is \emph{finitely generated} if it is the smallest $k$-ideal containing some finite subset of $A$.  If $x\in A$, we will often use $\langle x \rangle$ to denote the smallest $k$-ideal containing $x$. If $x$ is only an element of an idempotent semigroup, it will denote the smallest $k$-subsemigroup\footnote{By a $k$-subsemigroup, we mean a subsemigroup $M$ satisfying the condition that $x+y,y \in M$ implies $x \in M$.} containing $x$ instead.
\end{mydef}

\begin{pro}\label{radicaldescription}
Let $A$ be a semiring and $I$ be a $k$-ideal of $A$.  Let $J$ be a finitely generated $k$-ideal.  Then, $J\subseteq \sqrt{I}$ if and only if there is some $n>0$ such that $J^n\subseteq I$.
\end{pro}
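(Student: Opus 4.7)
The plan is to prove the two directions separately. The backward direction is immediate from the definition of a prime $k$-ideal, while the forward direction will require a Zorn's lemma argument to build a prime $k$-ideal avoiding $J$.

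For the backward direction, I would suppose $J^n \subseteq I$ for some $n \geq 1$ and let $\mathfrak{p}$ be any prime $k$-ideal containing $I$. Then $J \cdot J^{n-1} = J^n \subseteq \mathfrak{p}$, so by the defining property of a prime $k$-ideal either $J \subseteq \mathfrak{p}$ or $J^{n-1} \subseteq \mathfrak{p}$. A simple induction on $n$ (with trivial base case $n=1$) yields $J \subseteq \mathfrak{p}$, and since $\mathfrak{p}$ was arbitrary, $J \subseteq \sqrt{I}$.

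For the forward direction I argue by contrapositive: assuming $J^n \not\subseteq I$ for all $n \geq 1$, I will construct a prime $k$-ideal $\mathfrak{p} \supseteq I$ with $J \not\subseteq \mathfrak{p}$. Consider the family
\[
\mathcal{F} := \{K \text{ a } k\text{-ideal of } A : I \subseteq K \text{ and } J^n \not\subseteq K \text{ for every } n \geq 1\}
\]
ordered by inclusion. By hypothesis $I \in \mathcal{F}$, and I claim Zorn's lemma applies. The union of any chain in $\mathcal{F}$ is a $k$-ideal containing $I$. For the avoidance condition, I will first show that since $J$ is a finitely generated $k$-ideal, say $J = \langle x_1, \ldots, x_r \rangle$, each power $J^n$ is again a finitely generated $k$-ideal, generated by the $n$-fold products $x_{i_1}\cdots x_{i_n}$; this follows from Proposition \ref{saturation} by a direct computation on the explicit description of the $k$-closure. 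Then if $J^n$ were contained in the union of a chain, each of its finitely many generators would lie in some member of the chain, and by totality of the order in a single member $K_{\alpha_0}$; since $K_{\alpha_0}$ is a $k$-ideal, it would then contain $J^n$, contradicting $K_{\alpha_0} \in \mathcal{F}$. Zorn's lemma thus produces a maximal $\mathfrak{p} \in \mathcal{F}$.

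It remains to check that this $\mathfrak{p}$ is a prime $k$-ideal. Suppose $k$-ideals $K_1, K_2$ satisfy $K_1 K_2 \subseteq \mathfrak{p}$ but $K_i \not\subseteq \mathfrak{p}$ for $i=1,2$; then the $k$-ideals $\mathfrak{p} + K_i$ strictly contain $\mathfrak{p}$, so by maximality there exist $n_1, n_2$ with $J^{n_i} \subseteq \mathfrak{p} + K_i$. Using distributivity of $k$-ideal multiplication over $k$-ideal addition,
\[
J^{n_1 + n_2} = J^{n_1} J^{n_2} \subseteq (\mathfrak{p} + K_1)(\mathfrak{p} + K_2) = \mathfrak{p}^2 + \mathfrak{p} K_2 + K_1 \mathfrak{p} + K_1 K_2 \subseteq \mathfrak{p},
\]
contradicting $\mathfrak{p} \in \mathcal{F}$. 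Since $\mathfrak{p} \in \mathcal{F}$ forces $J \not\subseteq \mathfrak{p}$, this prime $k$-ideal witnesses $J \not\subseteq \sqrt{I}$. The main technical obstacle I anticipate is verifying the two structural facts about the semiring of $k$-ideals tacitly used above, namely that $J^n$ is finitely generated whenever $J$ is, and the distributive law $(I+K)L = IL + KL$ for $k$-ideals. Both reduce to routine manipulations with Proposition \ref{saturation}, but they deserve to be recorded explicitly as preparatory lemmas before the proof.
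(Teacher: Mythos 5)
Your proof is correct and takes essentially the same approach as the paper: both directions proceed identically, and the key step is a Zorn's lemma argument on the family of $k$-ideals containing $I$ that avoid all powers of $J$ (which the paper calls ``$J$-less'' $k$-ideals), using the finite generation (compactness) of $J^n$ to justify that the union of a chain stays in the family, and then showing maximality forces primality via $J^{n_1+n_2}\subseteq(\mathfrak{p}+K_1)(\mathfrak{p}+K_2)\subseteq\mathfrak{p}$. The only cosmetic difference is that the paper phrases the Zorn step in terms of filtered families rather than chains and defers the two structural facts you flag (finite generation of products of finitely generated $k$-ideals, and distributivity) to a later lemma on the multiplication in $S(A)$ and $I(A)$, so your instinct to record them as preparatory lemmas is exactly in the spirit of the paper's eventual treatment.
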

\begin{proof}
Suppose $J^n\subseteq I$ and let $\mathfrak{p}$ be a prime $k$-ideal such that $I\subseteq \mathfrak{p}$.  Then $J^n\subseteq \mathfrak{p}$, and hence  $J\subseteq \mathfrak{p}$. Therefore, we have that $J\subseteq \sqrt{I}$.
	
Before turning to the converse, define a $k$-ideal $I$ to be $J$-less if $J^n\not\subseteq I$ for all $n>0$. We claim a filtered union of $J$-less $k$-ideals is $J$-less.  Let $\Gamma$ be a directed set and $\{N_i\mid i\in \Gamma\}$ a filtered family of $J$-less $k$-ideals.  Suppose $J^n\subseteq \bigcup_{i\in\Gamma} N_i = \sum_{i\in\Gamma} N_i$.  Then since $J^n$ is finitely generated, it is contained in some finite subsum\footnote{This is essentially the compactness condition of Definition \ref{compactdef}.}.  Since the family of $N_i$ is filtered, $J^n\subseteq N_i$ for some $i$.  This contradiction establishes the claim.  The claim together with Zorn's lemma establishes that every $J$-less $k$-ideal is contained in a maximal $J$-less $k$-ideal.
	
For the converse, suppose $J\subseteq \sqrt{I}$ and suppose for the sake of contradiction that $I$ is $J$-less.  Let $\mathfrak{p}$ be a maximal $J$-less $k$-ideal containing $I$.  To see $\mathfrak{p}$ is a prime $k$-ideal, let $\mathfrak{a},\mathfrak{b}$ be $k$-ideals with $\mathfrak{a}\mathfrak{b}\subseteq \mathfrak{p}$ and suppose $\mathfrak{a},\mathfrak{b}\not\subseteq \mathfrak{p}$.  Since $\mathfrak{p}\subsetneq \mathfrak{p}+\mathfrak{a}$ it follows from maximality that $\mathfrak{p}+\mathfrak{a}$ is not $J$-less, so there is some $k>0$ such that $J^k\subseteq \mathfrak{p}+\mathfrak{a}$.  Similarly, there is $l>0$ such that $J^l\subseteq \mathfrak{p}+\mathfrak{b}$.  A simple computation shows that
\[
J^{k+l}\subseteq (\mathfrak{p}+\mathfrak{a})(\mathfrak{p}+\mathfrak{b})\subseteq \mathfrak{p}.
\]
This contradicts the $J$-lessness of $\mathfrak{p}$, so our assumption that $\mathfrak{a},\mathfrak{b}\not\subseteq \mathfrak{p}$ is false and $\mathfrak{p}$ is a prime $k$-ideal containing $I$.  Then $J\subseteq \sqrt{I}\subseteq \mathfrak{p}$, contradicting the $J$-lessness of $\mathfrak{p}$ again, so the assumption that $I$ is $J$-less is false.  This establishes the result.
\end{proof}

\subsubsection{Radical idealic semirings, bounded distributive lattices and spectral spaces}

\begin{mydef}\cite{takagi2010construction} \label{definition: idealic}
Let $A$ be an idempotent semiring. $A$ is said to be \emph{idealic} if $1$ is the maximum element, i.e., $x\leq 1$ for all $x\in A$.
\end{mydef}

\begin{mydef}
An idealic semiring is said to be \emph{radical} if $x^2=x$ for all $x\in A$.
\end{mydef}

Some simple examples of idealic semirings are the semiring of all ideals, the semiring of finitely generated ideals and the semiring of finitely generated $k$-ideals of a semiring.  We will later see that radicals of finitely generated $k$-ideals form a radical idealic semiring.

The next result implies that like addition, the multiplication operation on a radical idealic semiring is determined by the partial order.

\begin{lem}
Let $A$ be a radical idealic semiring and $x,y\in A$.  Then $xy$ is the greatest lower bound of $x$ and $y$.
\end{lem}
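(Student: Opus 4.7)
The plan is to verify the two defining properties of a greatest lower bound separately. First I would check that $xy$ is indeed a lower bound of $\{x,y\}$, and then that any common lower bound $z$ of $x$ and $y$ satisfies $z \leq xy$.

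For the lower bound property, I would use the idealic hypothesis together with the compatibility of the canonical order with multiplication. Since $y \leq 1$, multiplying the defining equation $y + 1 = 1$ through by $x$ and using distributivity yields $xy + x = x$, so $xy \leq x$. The same argument starting from $x \leq 1$ and multiplying by $y$ gives $xy \leq y$. This part is direct and uses only the idempotent and idealic axioms, not the radical condition.

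For the maximality, suppose $z \in A$ satisfies $z \leq x$ and $z \leq y$. The key move is to multiply the inequality $z \leq y$ by $z$ on the left, which (by compatibility of the order with multiplication) produces $z^2 \leq zy$; here the radical hypothesis $z^2 = z$ collapses this to $z \leq zy$. Next, multiplying $z \leq x$ by $y$ gives $zy \leq xy$. Combining via transitivity yields $z \leq zy \leq xy$, which is the desired conclusion.

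Together these two steps show that $xy$ is a lower bound of $\{x,y\}$ and dominates every other common lower bound, so $xy = x \wedge y$. There is essentially no obstacle here: the argument is a short chain of applications of compatibility of the order with multiplication, and the only nontrivial ingredient is the radical identity $z^2 = z$, which is exactly what lets us convert ``$z$ a common lower bound'' into ``$z \leq xy$.'' I would expect the author's proof to follow this same two-step template.
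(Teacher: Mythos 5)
Your proof is correct and follows essentially the same two-step template as the paper's: it shows $xy$ is a lower bound using $y\leq 1$ (resp. $x\leq 1$) and the idealic property, then uses the radical identity $z=z^2$ to get $z\leq xy$. The only cosmetic difference is that you spell out the chain $z=z^2\leq zy\leq xy$ in two steps, whereas the paper compresses it to $z=z^2\leq xy$ by directly invoking the fact (noted earlier in the paper) that $a\leq b$ and $c\leq d$ imply $ac\leq bd$.
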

\begin{proof}
$xy \leq x$ follows from $y \leq 1$, and similarly $xy \leq y$. Hence $xy$ is a lower bound.  Let $z\in A$ satisfy $z\leq x$ and $z\leq y$.  Then $z = z^2 \leq xy$, showing that $xy$ is the greatest lower bound. 
\end{proof}

The following theorem says that essentially radical idealic semirings are the same as bounded distributive lattices. 

\begin{mythm}\label{theorem: equivalence of cat}
There is an equivalence of categories between radical idealic semirings (as a subcategory of the category of semirings) and bounded distributive lattices. Furthermore, this equivalence commutes with forgetful functors (i.e. it is the identity on the level of sets). In particular, as the category of bounded distributive lattices is antiequivalent to the category of spectral spaces, we can conclude that the category of radical idealic semirings is also antiequivalent to the category of spectral spaces.
\end{mythm}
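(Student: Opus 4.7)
The plan is to construct mutually inverse functors $F \colon \mathbf{RadIdSemi} \to \mathbf{BDLat}$ and $G \colon \mathbf{BDLat} \to \mathbf{RadIdSemi}$ that act as the identity on underlying sets. For a radical idealic semiring $A$, set $F(A) := (A, \vee, \wedge, 0, 1)$ where $x \vee y := x+y$ and $x \wedge y := xy$. For a bounded distributive lattice $L$, set $G(L) := (L, +, \cdot, 0, 1)$ where $a+b := a \vee b$ and $a \cdot b := a \wedge b$.

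First I would verify that $F(A)$ is a bounded distributive lattice. The canonical order on $A$ satisfies $x \vee y = x+y$ directly, and the preceding lemma furnishes $x \wedge y = xy$. Boundedness comes from $0$ being the additive identity (hence the minimum) and from the idealic hypothesis $x \le 1$. The key verification is distributivity $x + yz = (x+y)(x+z)$; expanding the right-hand side via the semiring axioms and using $x^2 = x$ (radical) gives $x + xz + xy + yz$, and then $xz \le x$ and $xy \le x$ (since $y, z \le 1$) collapse this to $x + yz$. Conversely, for $G(L)$, the commutative-monoid axioms for $(L, \vee, 0)$ and $(L, \wedge, 1)$ give the two monoid structures of a semiring, the distributive lattice law gives $(a+b)c = ac + bc$, and $0 \wedge a = 0$ gives the annihilator axiom. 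Idempotence of $+$ is $a \vee a = a$, the idealic condition $a \le 1$ is built into bounded lattices, and the radical condition $a^2 = a$ is $a \wedge a = a$. The canonical order on $G(L)$ coincides with the lattice order since $a + b = b$ iff $a \vee b = b$.

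Next I would treat morphisms. Since $F$ and $G$ act as identity on underlying sets, it suffices to observe that a map $f \colon A \to B$ of radical idealic semirings is the same as a $\{0,1\}$-lattice map $F(A) \to F(B)$: both conditions reduce to preserving $+/\vee$, $\cdot/\wedge$, $0$, and $1$, and preservation of multiplicative and additive structure follows automatically from preservation of the corresponding lattice operations under the identifications above. Thus $F$ and $G$ are well-defined functors, and $FG$ and $GF$ are literally the identity on objects and morphisms, giving the categorical equivalence.

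Finally, the anti-equivalence between $\mathbf{BDLat}$ and the category of spectral spaces is the classical Stone duality (which is cited in the preliminaries, embodied in Hochster's theorem together with the lattice-theoretic framework sketched in \S 2). Composing with the equivalence $F$ yields the stated antiequivalence between radical idealic semirings and spectral spaces. The only genuinely substantive step is the distributivity check for $F(A)$; the rest is a bookkeeping verification that the axioms translate term-by-term, so the main obstacle is really making sure the idealic and radical hypotheses are used precisely where needed so that $(A, +, \cdot)$ yields a \emph{distributive} lattice rather than merely a lattice.
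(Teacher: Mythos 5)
Your proof is correct and follows essentially the same identity-on-underlying-sets approach as the paper, verifying the axioms term by term and inverting the translation. One minor simplification the paper exploits: once $xy$ is identified as the meet (via the preceding lemma), the lattice distributivity law $x\wedge(y\vee z)=(x\wedge y)\vee(x\wedge z)$ is \emph{literally} the semiring distributivity $x(y+z)=xy+xz$ and hence free, so your more elaborate check of the dual form $x\vee(y\wedge z)=(x\vee y)\wedge(x\vee z)$ using the radical and idealic hypotheses, while correct, is not needed.
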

\begin{proof}
	
Let $\mathcal{S}$ be the category of radical idealic semirings and $\mathcal{L}$ be the category of bounded distributive lattices. Define a functor $F$ from $\mathcal{S}$ to $\mathcal{L}$ by $F(A)=A$ for objects and $F(f)=f$ for morphisms. We first show that our functor $F$ is well defined. 

Let $A$ be a radical idealic semiring. Then $A$ has greatest lower bounds and least upper bounds (the product and sum resp.) of all two element sets, and has a  minimum and maximum (0 and 1).  Hence $A$ is a bounded lattice. $A$ is distributive because multiplication distributes over addition. Morphisms of radical idealic semirings are maps which preserve addition and multiplication, (i.e. joins and meets), and preserve 0 and 1 (i.e. the minimum and maximum) i.e., they are the same as morphisms of bounded lattices. 

It is clear from the definition that $F$ is fully faithful. For essential surjectivity, let $L$ be a bounded distributive lattice.  Let $A=L$ be an idempotent semiring with addition and multiplication defined as  the join and meet operations.  Since $L$ is a join semilattice, $A$ is a commutative idempotent semigroup.  The multiplication (i.e. meet) is associative and has as identity the  maximal element.  Distributivity of $L$ gives the distributive law in $A$.  $A$ is idealic because $1$ is the maximal element, and is radical because meet is an idempotent operator. Finally, it is clear that the functor $F$ commutes with forgetful functors. 
\end{proof}
 
We now proceed to show explicitly how every spectral space arises from a radical idealic semiring. 
 
\subsubsection{Algebraic Lattices}

We recall some basic results from lattice theory. Most of the results in this subsection are standard results of lattice theory. We only include them here for completeness. We refer the reader to \cite{steinberg2010lattice} for more details.

\begin{mydef}\label{compactdef}
Let $L$ be a complete lattice.  Let $x \in L$.  
\begin{enumerate}
\item 
A \emph{cover} of $x$ is a family of elements $y_i \in L$ indexed by some set $\Gamma$ such that  $x\leq \displaystyle\vee_{i\in \Gamma} y_i$.	
\item 
$x$ is \emph{compact} if every cover has a finite subcover, i.e., there is some finite $I\subseteq \Gamma$ such that  $\{y_i\mid i\in I\}$ is a cover of $x$.
\end{enumerate} 
\end{mydef}

Typically compactness agrees with some more concrete notion of finite generation, as shown in the following example.

\begin{myeg}\label{idealcompact}
Let $A$ be a semiring and $I'(A)$ be the semiring of ideals of $A$, where addition is given by the sum of two ideals and multiplication is given by the product of two ideals. Then, an element $x\in I'(A)$ is compact if and only if it is finitely generated. In fact, let $I$ be a finitely generated ideal of $A$ and let $\{J_i \mid i \in \Gamma\}$ be a cover, in other words, we assume that 
\[
I\subseteq\sum_{i\in\Gamma} J_i
\]
Any element $x$ of $I$ is a finite linear combination of elements of the $J_i$, which necessarily involves only finitely many of the $J_i$.  Thus each generator of $I$ is covered by a finite subfamily of $\{J_i\mid i \in \Gamma\}$. Taking the union over all generators of these gives us a finite subcover of $I$.
	
Conversely let $I$ be a compact element of $I'(A)$. Let $\Gamma=2^I$.  For any $i\in\Gamma$, let $J_i$ be the ideal generated by $i\subseteq I$, where $i$ is a finite subset of $I$. Clearly $I\subseteq \sum_{i\in\Gamma} J_i$, since for any $x\in I$, $x\in J_{\{x\}}$.  By compactness, there is a finite subset $\Gamma'$ such that $I\subseteq \sum_{i\in\Gamma'} J_i$.  On the other hand, by construction, $J_i\subseteq I$ for all $i$, so $I=\sum_{i\in\Gamma'} J_i$ is a finite sum of finitely generated ideals, in particular, $I$ is finitely generated. 
	
In addition, the same works for the semiring $I(A)$ of $k$-ideals - an element is compact if it is the smallest $k$-ideal containing some finite set of generators.  The only difference in this proof is that if $I\subseteq\sum_{i\in\Gamma} J_i$ and $x\in I$, then there is some $z$ such that both $z$ and $x+z$ are finite linear combinations of elements of the $J_i$\footnote{The proof is omitted for brevity, but involves using Proposition \ref{saturation}}.
\end{myeg}

\begin{mydef}
A lattice is \emph{algebraic} if it is complete and every element is a least upper bound of compact elements.
\end{mydef}

\begin{myeg}
The semiring of ideals of a semiring $A$ is a complete
lattice with the following operations
\begin{equation}\label{jm}
	\vee_{i\in \Gamma}I_i := \sum_{i\in \Gamma}I_i \qquad\qquad \wedge_{i\in \Gamma}I_i := \bigcap_{i\in \Gamma}I_i 
\end{equation}
Moreover, this complete lattice is also algebraic
for every ideal is a sum of finitely generated (and hence compact) ideals, for instance the sum of all principal subideals.
\\ The semiring of all $k$-ideals of a semiring $A$ is also an algebraic lattice with similar operations.
\end{myeg}

\begin{mydef}
For an algebraic lattice $L$, we will let $L^c$ be the set of compact elements.  For a commutative idempotent semigroup $M$, we will let $S(M)$ be the complete lattice (with operations similar to \eqref{jm}) of $k$-subsemigroups of $M$.
\end{mydef}

Note that the same proof as for ideals (as in Example \ref{idealcompact}) shows that $S(M)^c$ is the set of finitely generated $k$-subsemigroups of $M$, and that $S(M)$ is algebraic.  Similarly, the following lemma shows $L^c$ is an idempotent semigroup.

\begin{lem}\label{als}
Let $L$ be an algebraic lattice.  Then $L^c$ is a commutative idempotent semigroup under the order induced by $L$.
\end{lem}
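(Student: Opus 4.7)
The plan is to show that $L^c$ is closed under the join operation $\vee$ inherited from $L$; once this is done, commutativity and idempotence come for free from the lattice axioms on $L$, so $(L^c, \vee)$ is automatically a commutative idempotent semigroup.

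First I would take arbitrary compact elements $x, y \in L^c$ and an arbitrary cover $\{z_i\}_{i \in \Gamma}$ of $x \vee y$, i.e., $x \vee y \leq \bigvee_{i \in \Gamma} z_i$. Since $x \leq x \vee y$ and $y \leq x \vee y$, the family $\{z_i\}_{i \in \Gamma}$ is simultaneously a cover of $x$ and a cover of $y$. Applying compactness of $x$ yields a finite subset $I_1 \subseteq \Gamma$ with $x \leq \bigvee_{i \in I_1} z_i$, and applying compactness of $y$ yields a finite subset $I_2 \subseteq \Gamma$ with $y \leq \bigvee_{i \in I_2} z_i$. Taking the union $I := I_1 \cup I_2$, which is still finite, and using that $\vee$ is the least upper bound, I get $x \vee y \leq \bigvee_{i \in I} z_i$. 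This exhibits a finite subcover and shows $x \vee y \in L^c$, so $L^c$ is closed under $\vee$.

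The remaining observations are routine: $\vee$ is commutative, associative, and idempotent on $L$, hence restricts to the same on $L^c$, so $(L^c, \vee)$ is a commutative idempotent semigroup. The order induced by $L$ on $L^c$ agrees with the canonical order associated with this semigroup operation, since in any idempotent commutative semigroup one has $a \leq b$ iff $a \vee b = b$, which is exactly the lattice order on $L$ restricted to $L^c$.

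There is essentially no obstacle here — the content of the lemma is the closure of $L^c$ under finite joins, and it reduces immediately to combining finite subcovers of the two summands. The only subtlety worth flagging is that the argument uses only the definition of compactness as given in Definition \ref{compactdef}; algebraicity of $L$ (the hypothesis that every element is a sup of compacts) is not actually needed for this particular closure statement, but it is the natural setting in which $L^c$ is interesting.
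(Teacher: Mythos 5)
Your argument is essentially the paper's own: a cover of $x\vee y$ covers both $x$ and $y$, so you combine two finite subcovers. The one thing you omit, and which the paper does explicitly verify, is that the bottom element $0\in L$ lies in $L^c$: the paper's notion of ``commutative idempotent semigroup'' is really a commutative idempotent monoid (the semiring axioms in the paper require $(A,+)$ to be a monoid, and the subsequent constructions $S(M)$ and the isomorphism $M\cong S(M)^c$ use the identity element $0$), so this check is genuinely part of the statement. The check is trivial---every cover of $0$ admits the empty subcover---but you should include it. Your closing remark that algebraicity of $L$ is not used in the closure argument is correct; only the definition of compactness (hence completeness of $L$, to have arbitrary joins) is invoked.
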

\begin{proof}
As a lattice, $L$ is an idempotent semigroup, so we only need to show $L^c$ is closed under addition and contains $0$.  Of course, $0$ is compact because every cover contains an empty subcover.  If $x,y\in L^c$, then every cover of $x+y$ covers $x$ and $y$.  Thus we may pick two finite subcover which cover $x$ and $y$ respectively.  Their union covers $x+y$.
\end{proof}

The following well-known result shows that the study of algebraic lattices is equivalent to the study of commutative idempotent semigroups. We include a proof just for completeness.

\begin{mythm}\label{alglat} $ $
\begin{enumerate}
\item
Let $M$ be a commutative idempotent semigroup.  Then $M\cong S(M)^c$ as semigroups.
\item 
Let $L$ be an algebraic lattice.  Then there is a lattice isomorphism $L\cong S(L^c)$.
\end{enumerate}
\end{mythm}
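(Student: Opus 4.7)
The plan is to construct explicit mutually inverse maps in each part and verify that they transport the relevant structure. For part (1), I would define $\phi\colon M \to S(M)^{c}$ by sending $x$ to the principal $k$-subsemigroup $\angles{x}$. My first step is to obtain an explicit description of $\angles{x}$: since idempotence makes $\{x\}$a subsemigroup of $M$, the semigroup analogue of Proposition \ref{saturation} gives
\[
\angles{x} \;=\; \{ z \in M : z + x = x \} \;=\; \{ z \in M : z \leq x \},
\]
the downset of $x$ in the canonical order. The additivity identity $\angles{x+y} = \angles{x} + \angles{y}$ (with the right-hand side being the sum in $S(M)$) would then follow by verifying directly that $\{z : z \leq x+y\}$ is a $k$-subsemigroup containing $x$ and $y$, while any $k$-subsemigroup containing $x$ and $y$ must contain $x+y$ and, by the $k$-property, its entire downset. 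Injectivity of $\phi$ is antisymmetry of the canonical order, and for surjectivity onto $S(M)^{c}$ I would invoke the semigroup analogue of Example \ref{idealcompact} identifying compact elements of $S(M)$ with finitely generated $k$-subsemigroups; if $N$ is generated by $\{x_1,\dots,x_n\}$, then $N = \angles{x_1 + \cdots + x_n}$ by the same downset argument.

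For part (2), I would define $\psi\colon L \to S(L^{c})$ by $\psi(l) = \{x \in L^{c} : x \leq l\}$ and its candidate inverse $\theta\colon S(L^{c}) \to L$ by $\theta(N) = \bigvee N$. Verifying $\psi(l) \in S(L^{c})$ is straightforward: Lemma \ref{als} together with the fact that $l$ is an upper bound of $\psi(l)$ gives closure under $+$, while the $k$-property reduces to the implication $a + b \leq l,\; b \leq l \Rightarrow a \leq l$, which is clear from $a \leq a + b$. The identity $\theta \circ \psi = \mathrm{id}_{L}$ is then exactly the algebraicity hypothesis: every $l \in L$ is the join of the compact elements below it.

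The main obstacle will be the other identity $\psi \circ \theta = \mathrm{id}_{S(L^{c})}$, where compactness has to interact nontrivially with the $k$-property. One inclusion is immediate. For the reverse, given $x \in L^{c}$ with $x \leq \bigvee N$, compactness of $x$ produces finitely many $y_1,\dots,y_n \in N$ with $x \leq y_1 + \cdots + y_n$; since $N$ is closed under $+$, the element $z := y_1 + \cdots + y_n$ lies in $N$, and then $x + z = z$ together with $z \in N$ forces $x \in N$ by the defining axiom of a $k$-subsemigroup. Finally, $\psi$ and $\theta$ are manifestly order-preserving, so being mutual inverses they automatically give a lattice isomorphism between $L$ and $S(L^{c})$ (an order isomorphism between complete lattices necessarily preserves arbitrary joins and meets). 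Apart from this compactness--$k$-property interplay, everything else is bookkeeping with downsets and the canonical order.
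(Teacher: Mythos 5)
Your proof is correct and follows essentially the same strategy as the paper's: in part (1) you send $x$ to the principal $k$-subsemigroup $\angles{x}$, and in part (2) you use the map $l\mapsto\{x\in L^c\mid x\leq l\}$, which is exactly the paper's $f$, with the same compactness-meets-$k$-property argument at the key step. The only presentational differences are that you lean on the explicit downset description $\angles{x}=\{z\mid z\leq x\}$ throughout part (1) (the paper argues more abstractly and invokes this only for injectivity), and in part (2) you construct the inverse $\theta(N)=\bigvee N$ explicitly rather than checking injectivity and surjectivity separately; both are equivalent to the published argument.
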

\begin{proof}
$(1)$ Define $f: M \rightarrow S(M)^c$ by letting $f(x)$ be the $k$-subsemigroup generated by $x$.  To see $f$ is a homomorphism, note that $x+y\in f(x)+f(y)$, so $f(x+y)\subseteq f(x)+f(y)$.  Conversely $x$ is in any $k$-subsemigroup containing a larger element such as $x+y$, so $f(x),f(y)\subseteq f(x+y)$, showing that $f(x)+f(y) \subseteq f(x+y)$.  In addition, $f(0) = 0$.
	
The image of $f$ is the set of principal $k$-subsemigroups. Since $f$ is a homomorphism, this is closed under addition, so every finitely generated $k$-subsemigroup is in the image of $f$ (and is principal). Thus $f$ is surjective. For injectivity, note that for any $x\in M$, the set $M_x = \{y\in M\mid y\leq x\}$ is a $k$-subsemigroup with maximal element $x$ (in fact $M_x=f(x)$).  If $f(x)=f(y)$, any $k$-subsemigroup containing $x$ contains $y$. So, $y\in M_x$ and thus $y\leq x$.  Similarly $x\leq y$ so $x=y$, showing that $f$ is injective as well. 
	
$(2)$ Define the following function:
\[
f: L \rightarrow S(L^c), \quad x \mapsto \{y\in L^c \mid y\leq x\}.
\]
We first claim that $f$ is well defined. By the definition, $f(x)$ is closed under finite joins (i.e. under addition).  Clearly $f(x)$ contains the minimum element $0\in L^c$.  If $y\in f(x)$ and $z\in L^c$ satisfies $z\leq y$, then $z\in f(x)$.  Hence $f(x)$ is a $k$-subsemigroup. Note that any $x\in L$ is a join of compact elements, so satisfies $x=\vee_{y\in f(x)} y$.  Hence $f$ is injective.  For surjectivity, let $M\subseteq S(L^c)$ be a $k$-subsemigroup.  Let $x = \vee_{y\in M} y$.  Clearly $M\subseteq f(x)$.  Conversely if $y\in f(x)$, then $y\leq x$, and $M$ covers $y$. Since $y$ is compact, there is a finite subcover, i.e., $y$ is bounded by a finite sum of elements of $M$.  Since $M$ is a  $k$-subsemigroup, this implies $y\in M$.
	
Finally, if $x\leq y$, it is clear that $f(x)\leq f(y)$.  Conversely, if $f(x)\leq f(y)$, then
\[
x = \vee_{z\in f(x)} z \leq \vee_{z\in f(y)} z = y,
\]
showing that $f$ is a lattice isomorphism. 
\end{proof}

\begin{mydef}
By an \emph{algebraic lattice with multiplication}, we mean an algebraic lattice together with a multiplication operation which has a compact identity, is associative and commutative, distributes over arbitrary joins, and preserves compactness.
\end{mydef}

It easily follows from Lemma \ref{als} that an algebraic lattice with multiplication $L$ is an idempotent semiring with some additional properties.  In particular if $x,y,z,w\in L$ with $x\leq z$ and $y\leq w$ then $xy\leq zw$.

\begin{mydef}
Let $A$ be an idempotent semiring and $M,N\in S(A)$ be additive $k$-subsemigroups. Let $MN$ be the smallest $k$-subsemigroup containing $\{xy\mid x\in M,y\in N\}$.
\end{mydef}

Theorem \ref{alglat} extends to algebraic lattices with multiplication and idempotent semirings.  First we will need a lemma to understand the multiplication in $S(A)$.

\begin{lem}\label{satmult} $ $
\begin{enumerate}
 \item Let $A$ be an idempotent semiring and $M,N\in S(A)$.  If $S$ and $T$ are generating sets of $M$ and $N$ respectively, then $MN$ is the smallest $k$-subsemigroup containing $\{xy\mid x\in S,y\in T\}$.
 \item Let $A$ be an idempotent semiring and $M,N\in I(A)$.  If $S$ and $T$ are generating sets of $M$ and $N$ respectively, then $MN$ is the smallest $k$-ideal\footnote{Note that the product $MN$ here is the product of $k$-ideals, which differs from the first part of the lemma}  containing $\{xy\mid x\in S,y\in T\}$.
\end{enumerate}
\end{lem}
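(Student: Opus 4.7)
Both parts rest on the same mechanism, namely the description of $k$-closure as downward closure with respect to the canonical order, combined with distributivity and order-compatibility of multiplication.

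First, I will specialize Proposition \ref{saturation} to the idempotent case (and note that the same argument works verbatim for $k$-subsemigroups). In an idempotent semiring, $x$ lies in the $k$-closure of a subsemigroup $I$ iff $x\leq z$ for some $z\in I$: one direction uses $x\leq x+y$ when $y,x+y\in I$, and the other uses $x+z=z$ when $x\leq z\in I$. Consequently, the $k$-subsemigroup $M$ generated by $S$ admits the explicit description
$$M=\{x\in A\mid x\leq s_1+\cdots+s_n\text{ for some }s_i\in S\},$$
and, when instead $M$ denotes the $k$-ideal generated by $S$,
$$M=\{x\in A\mid x\leq a_1 s_1+\cdots+a_n s_n\text{ for some }a_i\in A,\,s_i\in S\}.$$
Analogous formulas hold for $N$ with $T$ in place of $S$.

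Let $K$ denote the smallest $k$-subsemigroup (resp.\ $k$-ideal) containing $\{st\mid s\in S,\,t\in T\}$. The inclusion $K\subseteq MN$ is immediate from $S\subseteq M$ and $T\subseteq N$. For the reverse inclusion, pick arbitrary $x\in M$ and $y\in N$, and write $x\leq\sigma$, $y\leq\tau$ using the descriptions above. By compatibility of $\leq$ with multiplication (recorded right before Proposition \ref{radicaldescription}) we have $xy\leq\sigma\tau$. Expanding $\sigma\tau$ by distributivity produces, in part (1), a finite sum of elements $s_it_j$ each of which lies in $K$, so $\sigma\tau\in K$. In part (2) the expansion becomes a finite sum of terms $(a_ib_j)(s_it_j)$; here I use that $K$ is a $k$-ideal and is therefore closed under multiplication by $A$, so again $\sigma\tau\in K$. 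Because $K$ is downward closed, $xy\in K$. This gives $\{xy\mid x\in M,\,y\in N\}\subseteq K$, and minimality of $MN$ yields $MN\subseteq K$.

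The only mild obstacle is keeping the two generating-set conventions straight: in the subsemigroup case the generators may only be combined by addition, while in the ideal case they may additionally be scaled by elements of $A$. Once this is made explicit via the two downward-closure formulas above, both parts collapse to the same short distributivity calculation, and the lemma follows.
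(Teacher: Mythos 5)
Your argument is correct, but it takes a genuinely different route from the paper's. The paper proves the lemma via a transporter (``colon'') construction: for a $k$-subsemigroup $C$ and $x\in A$ it introduces $(C:x)=\{y\in A\mid xy\in C\}$, observes that this is the preimage of a $k$-subsemigroup under a semigroup homomorphism (hence itself a $k$-subsemigroup, and a $k$-ideal when $C$ is), and then bootstraps from $S\subseteq\bigcap_{y\in T}(C:y)$ to $M\subseteq\bigcap_{y\in T}(C:y)$ and repeats to get $\{xy\mid x\in M, y\in N\}\subseteq C$. That argument never has to name the elements of $M$ or $N$ explicitly and works at the level of abstract $k$-subsemigroups. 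Your proof instead exploits the idempotent-specific description of $k$-closure as downward closure (Proposition \ref{saturation} specialized to the idempotent case, which the paper records separately right after it), writes down an explicit normal form for the $k$-subsemigroup (resp.\ $k$-ideal) generated by $S$, and then runs a distributivity-plus-order-monotonicity computation. Both approaches are sound and of comparable length; yours is more computational and makes the order-theoretic content explicit, while the paper's colon trick is slightly more abstract and avoids unpacking generators. One thing worth flagging, though you handled it correctly: your proof relies on the observation that Proposition \ref{saturation} carries over verbatim from $k$-ideals to $k$-subsemigroups, which is true but is not stated in the paper and needs to be checked (the paper's own proof does not need it, which is one small advantage of the transporter route).
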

\begin{proof}
We start by proving (1).  Clearly $\{xy\mid x\in S,y\in T\}\subseteq MN$.  Before proving the reverse inclusion, for each $x\in A$ and $C\in S(A)$, we define the following subset of $A$: 
\[
(C:x)=\{y\in A\mid xy\in C\}.
\]
Since multiplication by $x$ is a semigroup homomorphism, $(C:x)$ is the preimage of a $k$-subsemigroup under a homomorphism, and hence $(C:x)$ is a $k$-subsemigroup.
	
Let $C$ be a $k$-subsemigroup containing $\{xy\mid x\in S,y\in T\}$.  Then, we have that
\[
S\subseteq \bigcap_{y\in T} (C:y).
\] 
Since the right side is a $k$-subsemigroup, $M\subseteq \bigcap_{y\in T} (C:y)$.  Thus $\{xy\mid x\in M,y\in T\}\subseteq C$.  Applying the same trick a second time shows $\{xy\mid x\in M,y\in N\}\subseteq C$.

If $C$ is a $k$-ideal, then the above shows $(C:x)$ is a $k$-subsemigroup and a routine calculation shows $(C:x)$ is closed under multiplication, so it is a $k$-ideal.  The rest of the proof of (2) is similar to the first part.
\end{proof}

We now give two examples of algebraic lattices with multiplication.

\begin{lem} \label{lemma: S(A),I(A) algebraic lattices}
Let $A$ be an idempotent semiring. Then, $S(A)$ and $I(A)$ are algebraic lattices with multiplication.
\end{lem}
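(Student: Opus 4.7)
The plan is to verify in turn each clause of the definition of an algebraic lattice with multiplication: completeness and generation by compacts, commutativity and associativity of multiplication, existence of a compact identity, preservation of compactness, and distributivity over arbitrary joins. Lemma \ref{satmult} will do essentially all the work.

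First, I would check that $S(A)$ and $I(A)$ are algebraic lattices. Completeness is immediate from the fact that an arbitrary intersection of $k$-subsemigroups (resp.\ $k$-ideals) is again one; joins are then determined formally. By the same argument already spelled out in Example \ref{idealcompact}, an element is compact if and only if it is finitely generated (as a $k$-subsemigroup, resp.\ $k$-ideal). Any $M \in S(A)$ equals the join of the finitely generated $k$-subsemigroups it contains (e.g.\ the singletons $\langle x\rangle$ for $x \in M$), and analogously for $I(A)$, so both lattices are algebraic.

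Next, I would check the multiplicative structure. Commutativity of the product is immediate from commutativity of $A$. For associativity, using Lemma \ref{satmult} iteratively, both $(MN)P$ and $M(NP)$ are the smallest $k$-subsemigroup (resp.\ $k$-ideal) containing the set $\{xyz \mid x \in M,\, y \in N,\, z \in P\}$. The identity element in both $S(A)$ and $I(A)$ is $\langle 1\rangle$: it is finitely generated, hence compact, and Lemma \ref{satmult} applied with $\{1\}$ as generating set for $\langle 1\rangle$ shows $M\cdot \langle 1\rangle$ is generated by $\{x\cdot 1 \mid x \in M\} = M$, so equals $M$. Preservation of compactness is similarly clean: given finite generating sets $S$ of $M$ and $T$ of $N$, Lemma \ref{satmult} exhibits $MN$ as generated by the finite set $\{xy \mid x\in S,\, y \in T\}$, so $MN$ is compact.

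The only step requiring a little care is distribution over arbitrary joins, namely $M\cdot \bigl(\vee_i N_i\bigr) = \vee_i (MN_i)$. The inclusion $\supseteq$ follows from monotonicity. For $\subseteq$, fix any generating set $S$ for $M$ and generating sets $T_i$ for each $N_i$; then $\bigcup_i T_i$ generates $\vee_i N_i$, so by Lemma \ref{satmult} the product $M\cdot(\vee_i N_i)$ is the smallest $k$-subsemigroup (resp.\ $k$-ideal) containing $\bigcup_i \{xy \mid x \in S,\, y \in T_i\}$. Each set in this union is contained in $MN_i \subseteq \vee_j MN_j$, and since $\vee_j MN_j$ is already a $k$-subsemigroup (resp.\ $k$-ideal), the inclusion follows. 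The main obstacle, if any, is purely notational: one must apply the correct part of Lemma \ref{satmult} in each case and remember that ``generates'' means ``generates as a $k$-subsemigroup'' (resp.\ $k$-ideal), so that Proposition \ref{saturation} is used implicitly to pass between generators and elements.
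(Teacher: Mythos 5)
Your proposal is correct and takes essentially the same route as the paper: both proofs lean entirely on Lemma \ref{satmult} to identify generating sets of products, then read off compactness of the identity, preservation of compactness, associativity, and distributivity by comparing generating sets. The only cosmetic difference is that the paper proves distributivity by directly identifying the smallest $k$-subsemigroup containing $\bigcup_i\{xy \mid x\in M, y\in N_i\}$ with the join $\sum_i MN_i$, whereas you split the equality into two inclusions; and the paper takes algebraicity of $S(A)$ and $I(A)$ as already established (via the discussion following Lemma \ref{als} and Example \ref{idealcompact}) rather than re-verifying it inside the lemma.
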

\begin{proof}
The case of $I(A)$ is proven similarly to $S(A)$, so we will only show $S(A)$ is an algebraic lattice with multiplication.  In fact, it follows from Lemma \ref{satmult} that the product of compact elements is compact.  The $k$-subsemigroup generated by $1$ is the identity and is compact. Additionally, commutativity is clear.  For associativity, let $L,M,N\in S(A)$.  Define 
\[
g(L,M) = \{xy\mid x\in L,y\in M\}.
\]
$g(L,M)$ generates $LM$.  Then by Lemma \ref{satmult}, we have that $(LM)N$ is generated by
\[
\{xc\mid x\in g(L,M), c\in N\}=\{abc\mid a\in L, b\in M, c\in N\}.
\]
By symmetry, this generates $L(MN)$ as well, proving the associativity. 
	
It remains to show distributivity.  Let $M\in S(A)$ and $\{N_i\mid i\in \Gamma\}\subseteq S(A)$.  Let $N = \sum_i N_i$.  Then by Lemma \ref{satmult}, $MN$ is the smallest $k$-subsemigroup containing the following set:
\[
\{xy\mid x\in M,y\in \bigcup_i N_i\}=\bigcup_i \{xy\mid x\in M,y\in N_i\}.
\]
But this is the sum of the $k$-subsemigroups generated by $\{xy\mid x\in M,y\in N_i\}$. In particular, we have that
\[
M\left(\sum_i N_i \right) = MN = \sum_i \left(MN_i\right).
\]
\end{proof}

\begin{mythm}\label{theorem: allgat2}
Let $A$ be an idempotent semiring and $L$ be an algebraic lattice with multiplication. 
\begin{enumerate}
\item
$S(A)$ is an algebraic lattice with multiplication, and $A\cong S(A)^c$ as semirings.
\item
$L^c$ is an idempotent semiring, and there is a lattice isomorphism $L\cong S(L^c)$ which preserves multiplication.  
\end{enumerate}
\end{mythm}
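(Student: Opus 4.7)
My plan is to reduce the theorem to its predecessors, Theorem~\ref{alglat} and Lemma~\ref{lemma: S(A),I(A) algebraic lattices}, upgrading their lattice/semigroup level statements to the multiplicative setting.

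For part (1), Lemma~\ref{lemma: S(A),I(A) algebraic lattices} already tells us that $S(A)$ is an algebraic lattice with multiplication, so only the ring-level structure of the isomorphism $f : A \to S(A)^c$ from Theorem~\ref{alglat}(1) needs to be checked. Recall $f$ sends $x$ to the principal $k$-subsemigroup $\langle x \rangle$; I would show $f(1) = \langle 1 \rangle$ is the multiplicative identity of $S(A)$ (which is part of the definition of algebraic lattice with multiplication), and that $f(xy) = f(x) f(y)$. The latter is immediate from Lemma~\ref{satmult}(1) applied to the generating sets $\{x\}$ and $\{y\}$: it says $\langle x \rangle \langle y \rangle$ is the smallest $k$-subsemigroup containing $\{xy\}$, which is exactly $\langle xy \rangle$.

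For part (2), I would first check that $L^c$ inherits the structure of an idempotent semiring. By Lemma~\ref{als}, $L^c$ is already a commutative idempotent semigroup. Since multiplication on $L$ preserves compactness by hypothesis and has a compact identity, $L^c$ is closed under multiplication and contains $1$; commutativity, associativity, and distributivity of multiplication over finite joins are inherited pointwise from $L$. Next I would take the lattice isomorphism $g : L \to S(L^c)$, $x \mapsto \{y \in L^c \mid y \leq x\}$ from Theorem~\ref{alglat}(2) and show it is multiplicative. The inclusion $g(x)g(y) \subseteq g(xy)$ is straightforward: if $a \leq x$, $b \leq y$ are compact, then $ab \leq xy$ and $ab \in L^c$, so the generating set $\{ab \mid a \in g(x), b \in g(y)\}$ of $g(x)g(y)$ sits inside the $k$-subsemigroup $g(xy)$.

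The main obstacle is the reverse inclusion $g(xy) \subseteq g(x)g(y)$, which is where compactness and distributivity over arbitrary joins enter decisively. The key identity, obtained by writing $x = \bigvee_{a \in g(x)} a$ and $y = \bigvee_{b \in g(y)} b$ and applying distributivity of multiplication over arbitrary joins, is
\[
xy \;=\; \bigvee_{a \in g(x),\, b \in g(y)} ab.
\]
Now given any $c \in g(xy)$, the element $c$ is compact and $c \leq xy$, so the family $\{ab \mid a \in g(x), b \in g(y)\}$ covers $c$; compactness yields a finite subcover, so $c$ is bounded above by a finite sum of elements of $\{ab \mid a \in g(x), b \in g(y)\} \subseteq g(x)g(y)$. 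Because $g(x)g(y)$ is a $k$-subsemigroup, this forces $c \in g(x)g(y)$. Finally, $g(1)$ is the $k$-subsemigroup generated by the compact identity, which is the multiplicative identity of $S(L^c)$, completing the verification that $g$ is an isomorphism of semirings (and of lattices).
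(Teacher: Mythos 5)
Your proposal is correct and follows essentially the same route as the paper: part (1) via Lemma~\ref{satmult} applied to the singleton generating sets, and part (2) via the map $x \mapsto \{y \in L^c \mid y \leq x\}$, with forward inclusion from compactness-preservation and reverse inclusion from distributivity over arbitrary joins plus compactness. The only (cosmetic) difference is in the reverse inclusion: you perform a single compactness argument over the doubly-indexed family $\{ab\}_{a \in g(x),\, b \in g(y)}$, whereas the paper applies compactness twice, once in each variable, to produce a single pair $(w,z)$ with $c \leq wz$; both correctly finish by appealing to the $k$-subsemigroup property of $g(x)g(y)$.
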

\begin{proof}
$(1)$ From Lemma \ref{lemma: S(A),I(A) algebraic lattices}, we already know that $S(A)$ is an algebraic lattice with multiplication. Furthermore, we know that there is an isomorphism of semigroups from $A$ to $S(A)^c$ sending $x$ to $\langle x \rangle\in S(A)^c$, the smallest $k$-subsemigroup containing $x$.  It follows from Lemma \ref{satmult} that $\langle xy \rangle = \langle x \rangle \langle y \rangle$, proving that this isomorphism is indeed an isomorphism of semirings. 

$(2)$ It is clear that $L^c$ is an idempotent semiring. For the second assertion, we define the following function:
\[
f: L \rightarrow S(L^c), \quad x \mapsto \{y\in L^c \mid y\leq x\}.
\]
From Lemma \ref{satmult}, $f$ is an isomorphism of lattices. We only have to prove that $f$ is compatible with multiplication. In fact, if $a\in f(x)$ and $b\in f(y)$, then $ab$ is compact and $ab\leq xy$. So, $ab\in f(xy)$.  Thus, $f(x)f(y)\subseteq f(xy)$. 

For the reverse inclusion, let $c\in f(xy)$.  We may write $y$ as a join of compact elements $y=\vee_i z_i$.  Then the elements $xz_i$ cover $c$ and so it is covered by finitely many.  Setting $z$ to be the join of these finitely many $z_i$, we get $c\leq xz$ with $z$ compact and $z\leq y$.  Write $x=\vee_i w_i$ with $w_i$ compact.  As before, picking a finite subcover of $w_i z$ gives some compact $w$ with $w\leq x$ and $c\leq wz$.  We have $w\in f(x)$ and $z\in f(y)$ and so $c\in f(x)f(y)$, showing that $f(xy) \subseteq f(x)f(y)$. 
\end{proof}

\begin{rmk}\label{satimplideal}
When applying the above result to ideals, it is sometimes worth noting that any $k$-subsemigroup $I$ of an idealic semiring $A$ is an ideal.  This is because for any $r\in A, x\in I$, $rx\leq x$ and so $rx\in I$.
\end{rmk}

\begin{myeg}\label{idealcorr}
Let $I(A)$ be the lattice of $k$-ideals of a semiring $A$.  Then $$I(A)\cong S(I(A)^c) = I(I(A)^c)$$ by Remark \ref{satimplideal} and Theorem \ref{alglat}.
\end{myeg}

\subsubsection{Idealization, Radicalization, and Zariski space}

In this subsection, we prove that for a given spectral space $X$, there exists an idempotent semiring $A$ whose prime $k$-spectrum is homeomorphic to $X$. To this end, we first introduce two operations, \emph{idealization} and \emph{radicalization}.

\begin{mydef}
Let $A$ be an idempotent semiring.  The \emph{idealization} of $A$ is the quotient of $A$ by the congruence generated by the relations of the form $x+1\sim 1$ for all $x$ in $A$. 
\end{mydef}

It may be easily verified that the idealization of an idempotent semiring $A$ is the initial object in the category of idealic $A$-algebras\footnote{By an $A$-algebra $B$, we mean a semiring $B$ with a unit map $f:A \to B$.}
It turns out that the idealization of an idempotent semiring $A$ is the same as the semiring of finitely generated ideals of $A$ as the following proposition shows.

\begin{pro}\label{pro: idealization}
	Let $A$ be an idempotent semiring and $I(A)$ the semiring of $k$-ideals.  Then $I(A)^c$ is the idealization of $A$.
\end{pro}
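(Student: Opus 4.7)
The plan is to verify the universal property of idealization directly: I will show that $I(A)^c$, together with the map $\phi: A \to I(A)^c$ sending $x \mapsto \langle x \rangle$ (the smallest $k$-ideal containing $x$), is the initial object in the category of idealic $A$-algebras. First, $\phi$ is a semiring homomorphism by Lemma \ref{satmult}(2), which gives $\langle xy \rangle = \langle x \rangle \langle y \rangle$, together with the evident equality $\langle x+y \rangle = \langle x \rangle + \langle y \rangle$ in $I(A)^c$ and $\phi(0)=(0)$, $\phi(1)=\langle 1 \rangle = A$. Recall from Example \ref{idealcompact} that the elements of $I(A)^c$ are precisely the finitely generated $k$-ideals, all of which are contained in $A = \langle 1 \rangle$; thus $I(A)^c$ is idealic, and in particular $\phi(x+1) = \langle x \rangle + A = A = \phi(1)$, so $\phi$ factors through the idealization $\bar{A}$.

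For the universal property, given an idealic $A$-algebra $B$ with structure map $f: A \to B$, I will define $g: I(A)^c \to B$ by $g(\langle x_1,\ldots,x_n\rangle) := f(x_1)+\cdots+f(x_n)$ and show it is the unique $A$-algebra morphism extending $f$. Uniqueness is forced once well-definedness is known, since $I(A)^c$ is generated as a semiring by the image of $\phi$ (every finitely generated $k$-ideal is a finite sum of principal ones). The crucial input I will use is the explicit description of finitely generated $k$-ideals in an idempotent semiring: applying Proposition \ref{saturation} to the ordinary ideal $Ax_1+\cdots+Ax_n$ (and using that $k$-ideals are downward closed in idempotent semirings), one has
\[
\langle x_1,\ldots,x_n\rangle \;=\; \bigl\{z \in A \,\bigm|\, z \leq r_1 x_1 + \cdots + r_n x_n \text{ for some } r_i \in A\bigr\}.
\]

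The main obstacle, and the one nontrivial step, is well-definedness of $g$. Suppose $\langle x_1,\ldots,x_n\rangle = \langle y_1,\ldots,y_m\rangle$. Then by the above description each $x_i$ satisfies $x_i \leq \sum_j r_{ij} y_j$ for some $r_{ij} \in A$. Applying $f$, which preserves the canonical order because it preserves addition, yields $f(x_i) \leq \sum_j f(r_{ij}) f(y_j)$. Since $B$ is idealic, $f(r_{ij}) \leq 1_B$ for every $i,j$, which gives $f(r_{ij}) f(y_j) \leq f(y_j)$ and hence $f(x_i) \leq \sum_j f(y_j)$. Summing over $i$ and invoking the symmetric inequality produces $\sum_i f(x_i) = \sum_j f(y_j)$, establishing well-definedness. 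Once this is settled, the homomorphism property of $g$ is a routine consequence of Lemma \ref{satmult}(2), the distributivity $\bigl(\sum f(x_i)\bigr)\bigl(\sum f(y_j)\bigr) = \sum_{i,j} f(x_i y_j)$, and the fact that $g(\phi(x)) = f(x)$, so $g$ agrees with $f$ on the $A$-algebra structure. This identifies $I(A)^c$ with the initial idealic $A$-algebra, i.e., with the idealization of $A$.
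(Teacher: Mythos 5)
Your proof is correct, and it takes a genuinely different route from the paper's. The paper constructs the universal map as a composite: it first builds a homomorphism $f_*\colon I(A)^c \to I(S)^c$ by sending a finitely generated $k$-ideal to the $k$-ideal generated by its image under $f$ (proving well-definedness by exhibiting a choice-free description of $f_*(I)$), and then post-composes with the isomorphism $I(S)^c \cong S$ coming from Example \ref{idealcorr} and the algebraic-lattice machinery of Theorem \ref{alglat}. You instead define the map $g\colon I(A)^c \to B$ directly by $\langle x_1,\ldots,x_n\rangle \mapsto \sum_i f(x_i)$ and handle well-definedness hands-on, using the explicit order-theoretic description $\langle x_1,\ldots,x_n\rangle = \{z \mid z \leq \sum_i r_i x_i\text{ for some }r_i\}$ (which follows from Proposition \ref{saturation} plus downward-closure) together with the facts that $f$ preserves the canonical order and that $B$ is idealic. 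Unwinding the isomorphism $I(S)^c \cong S$ shows that the paper's composite agrees with your $g$; the difference is that your argument is elementary and self-contained (it avoids invoking the algebraic-lattice correspondence), at the cost of an explicit computation, while the paper's argument is slicker once the algebraic-lattice framework is in place but relies more heavily on it. Both correctly reduce uniqueness to the fact that the principal $k$-ideals additively generate $I(A)^c$.
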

\begin{proof}
	Clearly $I(A)^c$ is idealic and is equipped with an $A$-algebra structure by the map sending an element $x\in A$ to the corresponding principal $k$-ideal $\langle x \rangle$ as in Theorem \ref{alglat}. 
	
	Let $S$ be an idealic $A$-algebra, and let $f:A\rightarrow S$ be the unit of its algebra structure.  Define the function $f_*: I(A)^c\rightarrow I(S)^c$ as follows.  Let $I\in I(A)^c$, and let $X$ be a generating set of $I$.  Define $f_*(I)$ to be the $k$-ideal generated by $\{f(x)\mid x\in X\}$.  To show this is well defined, observe that $f^{-1}(f_*(I))$ is a $k$-ideal (since its the preimage of one) and contains $X$.  Hence $I\subseteq f^{-1}(f_*(I))$ and so $\{f(x)\mid x\in I\}\subseteq f_*(I)$.  Since $\{f(x)\mid x\in I\}$ contains a generating set, $f_*(I)$ may be described in a choice-free way as the $k$-ideal generated by $\{f(x)\mid x\in I\}$.  Since we can choose a finite generating set $X$, $f_*(I)$ is finitely generated.  
	
	Let $I, J\in I(A)^c$ and fix generating sets $X\subseteq I$ and $Y\subseteq J$. Then, $X \cup Y$ generates $I+J$ and so $f_*(I+J)$ is generated by $\{f(x)\mid x\in X\} \cup \{f(y)\mid y\in Y\}$.  Hence $f_*(I+J)=f_*(I)+f_*(J)$.  Similarly $IJ$ is generated by $XY$, so $f_*$ preserves multiplication as well.  It clearly preserves $0$ and $1$, and hence $f_*$ is a homomorphism.  Furthermore taking $X=\{x\}$ we see $f_*(\angles{x})$ is the principal $k$-ideal generated by $f(x)$.  
	
	Combining the above with Example \ref{idealcorr} gives homomorphisms $I(A)^c\rightarrow I(S)^c \cong S$ whose composition sends $\langle x \rangle$ to $f(x)$ and hence is an $A$-algebra homomorphism.  The uniqueness part of the universal property follows from the fact that there is only one homomorphism sending $\langle x \rangle$ to $f(x)$ since the principal ideals generate $I(A)^c$ (and in fact are all the elements).
\end{proof}

Next, we introduce a key definition in proving our main theorem in this section. 

\begin{mydef}
Let $A$ be a semiring.  $I_{rad}(A)$ is defined to be the lattice of radical ideals\footnote{The definition of radical ideals assumes that they are $k$-ideals. To be specific, by a radical ideal we mean that a $k$-ideal $I$ such that $\sqrt{I}=I$. Equivalently, $I=\sqrt{J}$ for some $k$-ideal $J$.} together with the multiplication operation sending a pair $(I,J)$ to $\sqrt{IJ}$.
\end{mydef}

The meet and join operations in $I_{rad}(A)$ are $(I,J) \mapsto I\cap J$ and $(I,J)\mapsto \sqrt{I+J}$.  It is easy to show that these descriptions apply to infinite meets and joins as well, so $I_{rad}(A)$ is complete.

\begin{lem}\label{lemma: 3.34}
Let $A$ be a semiring. 
\begin{enumerate}
	\item
	The compact elements of $I_{rad}(A)$ are the radicals of finitely generated $k$-ideals.
	\item
	$I_{rad}(A)$ is an algebraic lattice.
	\item
 $I_{rad}(A)$ is an algebraic lattice with multiplication (and hence an idempotent semiring).
\end{enumerate}
\end{lem}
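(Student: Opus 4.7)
The plan is to prove the three parts in order, since each builds on the previous, with the main work concentrated in part (1).

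For part (1), I want to characterize compact elements of $I_{rad}(A)$. The key preliminary observation is that for any radical ideal $I$, one has the identity
\[
I \;=\; \bigvee_{x\in I}\sqrt{\langle x\rangle}
\]
in $I_{rad}(A)$: the right-hand side equals $\sqrt{\sum_{x\in I}\sqrt{\langle x\rangle}}$, which sits between $\sqrt{I}=I$ and itself. Now, if $I=\sqrt{J}$ with $J$ a finitely generated $k$-ideal and $I\leq \bigvee_\alpha K_\alpha=\sqrt{\sum_\alpha K_\alpha}$, then $J\subseteq \sqrt{\sum_\alpha K_\alpha}$, and Proposition \ref{radicaldescription} gives $n>0$ with $J^n\subseteq \sum_\alpha K_\alpha$. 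Since $J^n$ is a finitely generated $k$-ideal, I use the explicit description of the smallest $k$-ideal containing a set (Proposition \ref{saturation}) together with the sum description of $\sum_\alpha K_\alpha$ to see that each of the finitely many generators of $J^n$ already lies in $\sum_{\alpha\in F}K_\alpha$ for some finite $F$. Hence $J\subseteq \sqrt{\sum_{\alpha\in F}K_\alpha}$, giving a finite subcover and proving compactness of $\sqrt{J}$. For the converse, if $I$ is compact, applying compactness to the cover $I=\bigvee_{x\in I}\sqrt{\langle x\rangle}$ yields a finite subcover, so $I=\sqrt{\langle x_1,\dots,x_n\rangle}$, the radical of a finitely generated $k$-ideal.

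Part (2) is then immediate: completeness was already noted (meets are intersections, joins are $\sqrt{\sum}$), and the display $I=\bigvee_{x\in I}\sqrt{\langle x\rangle}$ together with part (1) exhibits every element as a join of compact elements.

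For part (3), I need to verify every axiom for an algebraic lattice with multiplication. The identity is $A=\sqrt{\langle 1\rangle}$, compact by (1). Commutativity is clear. For associativity and distributivity, the central calculation is
\[
\sqrt{\,\sqrt{IJ}\cdot K\,} \;=\; \sqrt{IJK},
\]
verified in both directions: the inclusion $\supseteq$ uses $IJ\subseteq \sqrt{IJ}$, while for $\subseteq$ it suffices to note that for $y\in \sqrt{IJ}$ with $y^n\in IJ$ and $z\in K$, one has $(yz)^n\in IJ\cdot K^n\subseteq IJK$ (using that $K^n\subseteq K$), whence $yz\in \sqrt{IJK}$, which is a $k$-ideal. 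By symmetry both triple products equal $\sqrt{IJK}$, yielding associativity. The same trick gives
\[
I*\bigvee_\alpha J_\alpha \;=\; \sqrt{I\cdot\sqrt{\textstyle\sum_\alpha J_\alpha}} \;=\; \sqrt{\textstyle\sum_\alpha I J_\alpha} \;=\; \bigvee_\alpha (I*J_\alpha),
\]
establishing distribution over arbitrary joins. Finally, for compactness preservation, if $I=\sqrt{J_1}$ and $J=\sqrt{J_2}$ with $J_1,J_2$ finitely generated $k$-ideals, a straightforward radical computation shows $I*J=\sqrt{J_1 J_2}$, and $J_1J_2$ is finitely generated (generated by products of generators), so $I*J$ is compact by (1).

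The main obstacle will be part (1): the sum in $I_{rad}(A)$ is $\sqrt{\sum}$ rather than $\sum$, so compactness cannot be read off directly from the compact-element description of $S(A)^c$ in Example \ref{idealcompact}. The crucial leverage is Proposition \ref{radicaldescription}, which converts the radical inclusion $J\subseteq \sqrt{\sum_\alpha K_\alpha}$ into the honest ideal inclusion $J^n\subseteq \sum_\alpha K_\alpha$ and thereby transfers the argument back to the familiar setting of finitely generated $k$-ideals inside an ordinary sum.
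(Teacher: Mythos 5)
Your argument for parts (1) and (2) follows the paper's proof essentially verbatim: you cover an arbitrary radical ideal by radicals of principal $k$-ideals, and for the converse you use Proposition~\ref{radicaldescription} to convert the inclusion $J\subseteq\sqrt{\sum_\alpha K_\alpha}$ into the honest inclusion $J^n\subseteq\sum_\alpha K_\alpha$, then apply compactness of the finitely generated $k$-ideal $J^n$ inside $I(A)$.

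For part (3) you take a genuinely different route. The paper verifies every identity --- $\sqrt{\sqrt{I}\sqrt{J}}=\sqrt{IJ}$, associativity, and distributivity --- by observing that two radical ideals coincide iff they are contained in the same prime $k$-ideals, and then checking membership in primes directly from the definitions of prime and radical. You instead work at the level of individual elements, pushing an exponent $y^n\in IJ$ through a twisted product of ideals (e.g.\ $(yz)^n=y^nz^n\in IJ\cdot K^n\subseteq IJK$) to show all the relevant $k$-ideal products land inside $\sqrt{IJK}$ before the outer radical is even taken. Both approaches work. The paper's is shorter and avoids element-chasing, since the needed facts ($\mathfrak{p}\supseteq MN$ iff $\mathfrak{p}\supseteq M$ or $\mathfrak{p}\supseteq N$, and $\mathfrak{p}\supseteq M$ iff $\mathfrak{p}\supseteq\sqrt M$) fall straight out of the definitions. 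Your approach is more hands-on, but note one gap in exposition: the radical $\sqrt I$ of a $k$-ideal is \emph{defined} in this section as the intersection of the prime $k$-ideals containing $I$, not as $\{x\mid x^n\in I\}$. You invoke the element-wise power characterization without justification; strictly speaking it needs to be derived, and the derivation is exactly Proposition~\ref{radicaldescription} applied to $J=\langle x\rangle$ together with $\langle x\rangle^n=\langle x^n\rangle$, so you should state that. Once this is in place your argument for (3) is correct, and the observation that $\mathfrak{a}\mathfrak{b}=\sqrt{J_1J_2}$ with $J_1J_2$ finitely generated settles compactness of the product exactly as the paper does.
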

\begin{proof}
$(1)$ and $(2)$: Any radical ideal $I$ can be written as $I=\sqrt{\sum_{x\in I} \langle x \rangle}$ and so it is covered by radicals of finitely generated $k$-ideals.  Thus (1) implies (2).  This remark also shows that any compact element is the radical of a finitely generated $k$-ideal.

Conversely, let $I\in I_{rad}(A)$ be the radical of a finitely generated $k$-ideal $J$.  Let $N_i$ for $i \in\Gamma$ be a cover of $I$ in $I_{rad}(A)$.  Then we have 
\[
J \subseteq \sqrt{J} = I\subseteq \sqrt{\sum_{i\in\Gamma} N_i}.
\]
By Proposition \ref{radicaldescription}, there is some $n > 0$ such that $J^n\subseteq\sum_{i\in\Gamma} N_i$.  Since $J^n$ is finitely generated, $J^n\subseteq\sum_{i\in\Gamma'} N_i$ for some finite $\Gamma'\subseteq \Gamma$.  Taking radicals, $I\subseteq \sqrt{\sum_{i\in\Gamma'} N_i}$.  Thus $I$ is compact.
	
(3): To show multiplication preserves compactness, let $\mathfrak{a},\mathfrak{b}\in I_{rad}(A)^c$.  Then we can choose finitely generated ideals $I,J$ such that $\mathfrak{a}=\sqrt{I}$ and $\mathfrak{b}=\sqrt{J}$.  Then the product $\mathfrak{a}\mathfrak{b}$ is defined as the ideal $\sqrt{\sqrt{I}\sqrt{J}}$.  It is easy to see that this equals $\sqrt{IJ}$ - since both ideals are radical, we only need to check they belong to the same primes.  But since $IJ$ is finitely generated $\mathfrak{a}\mathfrak{b}$ is compact.  Since the identity is $A = \sqrt{A}$, it is compact as well.

Associativity amounts to the statement that
\[
\sqrt{\sqrt{IJ}N}=\sqrt{I\sqrt{JN}}.
\]
This can be checked by noting that a prime $k$-ideal $\mathfrak{p}$ contains $\sqrt{IJ}N$ if and only if $\sqrt{IJ} \subseteq \mathfrak{p}$ or $N\subseteq \mathfrak{p}$ if and only if $IJ\subseteq \mathfrak{p}$ or $N\subseteq \mathfrak{p}$ if and only if at least one of $I$, $J$, $N$ is a subset of $\mathfrak{p}$. By a similar argument, this holds if and only if $I\sqrt{JN}\subseteq \mathfrak{p}$.  Thus $\sqrt{IJ}N$ and $I\sqrt{JN}$ are contained in the same prime $k$-ideals, so have the same radical.  All that remains is to prove distributivity.

Let $I$ be a radical ideal and $\{J_i\mid i\in \Gamma\}$ be a collection of radical $k$-ideals.  After unpacking the distributive law, what we must show is that
\[
\sqrt{\sum_i \sqrt{IJ_i}}=\sqrt{I\sqrt{\sum J_i}}.
\]
Since the radical of a $k$-ideal is the intersection of the prime $k$-ideals containing it, we must show $\sum_i \sqrt{IJ_i}$ and $I\sqrt{\sum J_i}$ are contained in the same  prime $k$-ideals.  

Let $\mathfrak{p}$ be a prime $k$-ideal such that $\sum_i \sqrt{IJ_i}\subseteq \mathfrak{p}$. Then, $IJ_i\subseteq \mathfrak{p}$ for all $i$ since $\mathfrak{p}$ is a $k$-ideal.  If $I\not\subseteq p$, then $J_i\subseteq \mathfrak{p}$ for all $i$ and hence $\sum_i J_i\subseteq \mathfrak{p}$.  Hence $\sqrt{\sum_i J_i}\subseteq \mathfrak{p}$ and so $I\sqrt{\sum J_i} \subseteq \mathfrak{p}$.  The case where $I\subseteq \mathfrak{p}$ is trivial.

Now, suppose that $\mathfrak{p}$ is a prime $k$-ideal such that $I\sqrt{\sum J_i} \subseteq \mathfrak{p}$.  If $I\not\subseteq \mathfrak{p}$, then $\sqrt{\sum J_i} \subseteq \mathfrak{p}$ and hence $\sum J_i$ are contained in $\mathfrak{p}$.  Thus in this case, $IJ_i\subseteq \mathfrak{p}$ for all $i$, and establishing this in the case $I\subseteq \mathfrak{p}$ is easier.  It follows that $\sqrt{IJ_i}$ and hence $\sum_i \sqrt{IJ_i}$ are contained in $\mathfrak{p}$.
\end{proof}

Now, we introduce the second operation. 

\begin{mydef}
Let $A$ be an idealic semiring.  The \emph{radicalization} of $A$ is the quotient of $A$ by the congruence generated by relations of the form $x^2\sim x$ for all $x$ in $A$.
\end{mydef}

It may be easily verified that the radicalization of $A$ is the initial object in the category of radical idealic $A$-algebras.  Note that if $A$ is a radical idealic semiring, then clearly $A$ satisfies the universal property above.

When $A$ is an idempotent semiring and $\sim$ is a congruence relation on $A$, the quotient semiring $A/\sim$ is also idempotent, and hence equipped with a partial order: $[a] \leq [b]$ if $[a]+[b]=[b]$, where $[a]$ is the equivalence class of $a$. In particular, if $a \leq b$, then $[a] \leq [b]$. 

\begin{pro}
Let $A$ be an idealic semiring.  Then the radicalization of $A$ is isomorphic to $I_{rad}(A)^c$.
\end{pro}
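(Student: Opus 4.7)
The plan is to define $\phi: A \to I_{rad}(A)^c$ by $\phi(a) = \sqrt{\langle a \rangle}$, show it is a semiring homomorphism that factors through the radicalization quotient $A \twoheadrightarrow A_{rad}$, and prove the induced map $\bar\phi: A_{rad} \to I_{rad}(A)^c$ is a bijection; since both sides are radical idealic and $\bar\phi$ is a semiring map, this will give an isomorphism.

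First I would verify that $I_{rad}(A)^c$ is itself radical idealic: the top element is $A = \sqrt{A}$, and for any $I=\sqrt{I}$ the product in $I_{rad}(A)^c$ satisfies $I \cdot I = \sqrt{II} = I$, since a prime $k$-ideal contains $II$ iff it contains $I$. For $\phi$ to be a homomorphism, the two key identities are $\langle a+b\rangle = \langle a\rangle + \langle b\rangle$ and $\langle ab\rangle = \langle a\rangle\langle b\rangle$ of $k$-ideals of $A$; the first follows from the concrete description $\langle a\rangle = \{x \in A : x \leq a\}$ available in the idealic setting, and the second is Lemma \ref{satmult}(2). Combined with the general identities $\sqrt{\sqrt{I}+\sqrt{J}} = \sqrt{I+J}$ and $\sqrt{\sqrt{I}\sqrt{J}} = \sqrt{IJ}$ (both obtained by matching the sets of prime $k$-ideals containing each side, using primality), this shows $\phi$ respects addition and multiplication. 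The computation $\phi(a^2) = \sqrt{\langle a\rangle \langle a\rangle} = \sqrt{\langle a\rangle} = \phi(a)$ then gives the factorization $\bar\phi: A_{rad} \to I_{rad}(A)^c$.

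For surjectivity of $\bar\phi$, every element of $I_{rad}(A)^c$ is the radical of a finitely generated $k$-ideal by Lemma \ref{lemma: 3.34}(1); in the idealic case such a $k$-ideal is principal, since $\langle a_1,\dots,a_n\rangle = \langle a_1+\cdots+a_n\rangle$ (both $k$-ideals contain the $a_i$ and their sum and are downward closed), so it lies in the image of $\phi$. Injectivity is the main obstacle: suppose $\sqrt{\langle a\rangle} = \sqrt{\langle b\rangle}$. Proposition \ref{radicaldescription} applied to the finitely generated $k$-ideals $\langle a\rangle, \langle b\rangle$ yields exponents $m,n \geq 1$ with $\langle a^n\rangle = \langle a\rangle^n \subseteq \langle b\rangle$ and $\langle b^m\rangle \subseteq \langle a\rangle$; using $\langle x\rangle \subseteq \langle y\rangle \iff x \leq y$ in the idealic semiring $A$, these reduce to $a^n \leq b$ and $b^m \leq a$ in $A$. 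Passing to $A_{rad}$, where $[x]^k = [x]$ for all $k \geq 1$, the inequalities become $[a] \leq [b]$ and $[b] \leq [a]$, forcing $[a] = [b]$ in $A_{rad}$. The crux of the proof is precisely this translation: Proposition \ref{radicaldescription} converts the abstract equality of radicals into finite power containments, and the single relation $x^2 \sim x$ generating the radicalization congruence is exactly what is needed to collapse these containments to equality in the quotient.
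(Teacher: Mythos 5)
Your proof is correct and follows essentially the same approach as the paper's: both rest on the same crux, namely applying Proposition \ref{radicaldescription} to convert the radical-equality condition $\sqrt{\langle a\rangle}=\sqrt{\langle b\rangle}$ into finite power containments that collapse under the relation $x^2\sim x$. The paper phrases it as showing two congruences on $I(A)^c$ (the radicalization congruence and "having equal radicals") coincide, after which it asserts the quotient is $I_{rad}(A)^c$; you instead build the explicit map $\phi(a)=\sqrt{\langle a\rangle}$, verify it is a homomorphism, and prove bijectivity of the induced map directly, which is the same argument unfolded. Your version is somewhat more careful on a couple of points the paper treats tersely: you spell out why $\sqrt{\cdot}$ commutes with the lattice operations (i.e., why the relation "same radical" is actually a congruence, which the paper implicitly needs when concluding $\equiv\subseteq\sim$), and you give the short surjectivity argument via $\langle a_1,\dots,a_n\rangle=\langle a_1+\cdots+a_n\rangle$ rather than quoting "it is easy to check."
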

\begin{proof}
Using the isomorphism $A\cong I(A)^c$, we may view the radicalization as the quotient of $I(A)^c$ by the congruence $\equiv$ generated by the relations $x^2\equiv x$.  Define the equivalence relation $\sim$ on $I(A)^c$ as follows:
\[
I\sim J \iff \sqrt{I}=\sqrt{J}.
\] 
The equation $\sqrt{I^2}=\sqrt{I}$ shows that $I\equiv J$ implies $I\sim J$. 

Conversely, suppose that $I \sim J$ for $I,J \in I(A)^c$.  Since $\sqrt{I}=\sqrt{J}$, we have that $I\subseteq \sqrt{J}$.  By Proposition \ref{radicaldescription} and Lemma \ref{lemma: 3.34}, there is some $n>0$ such that $I^n\subseteq J$.  This implies $[I]\leq [J]$, where the brackets denote the equivalence classes inside the quotient semiring $I(A)^c/\equiv$.  Similarly, we have $[J] \leq [I]$ and so $I\equiv J$.  Thus the two equivalence relations are equal, and the radicalization is $I(A)^c/\sim$.  It is easy to check that this is isomorphic to $I_{rad}(A)^c$.
\end{proof}

\begin{pro}\label{radquot}
Let $A$ be an idempotent semiring.  Then $I_{rad}(A)^c$ is isomorphic to the quotient of $A$ by the congruence generated by relations of the form $x^2\sim x$ and $x+1\sim 1$ for all $x$ in $A$.
\end{pro}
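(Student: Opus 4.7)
The plan is to split the quotient into two stages and identify each stage with the corresponding construction we have already established. The congruence generated by the relations $\{x+1 \sim 1 \mid x \in A\} \cup \{x^2 \sim x \mid x \in A\}$ can be built up in two steps: first quotient by the $x+1\sim 1$ relations (yielding the idealization of $A$), then quotient the result by the $x^2 \sim x$ relations (yielding the radicalization of the idealization). Since the universal property of pushouts of quotients ensures that the two-step quotient agrees with the one-step quotient, we obtain
\[
A/\sim \; \cong \; \operatorname{Rad}\bigl(\operatorname{Ideal}(A)\bigr).
\]

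By Proposition \ref{pro: idealization}, $\operatorname{Ideal}(A) \cong I(A)^c$, which is idealic. Applying the previous proposition to the idealic semiring $I(A)^c$, the radicalization of $I(A)^c$ is isomorphic to $I_{rad}(I(A)^c)^c$. Thus
\[
A/\sim \;\cong\; I_{rad}(I(A)^c)^c,
\]
and the remaining task is to identify this with $I_{rad}(A)^c$.

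For the final identification, I would use Example \ref{idealcorr}, which gives a lattice isomorphism $I(A) \cong I(I(A)^c)$. This isomorphism sends a $k$-ideal $J \subseteq A$ to the $k$-ideal of $I(A)^c$ consisting of all finitely generated $k$-subideals of $J$. Under this correspondence, prime $k$-ideals correspond to prime $k$-ideals (as one can check directly from the compatibility of the isomorphism with products, using Lemma \ref{satmult}), so radicals are preserved and we obtain an induced bijection $I_{rad}(A) \cong I_{rad}(I(A)^c)$. Tracing through, this bijection is a lattice isomorphism that also commutes with the operation $(I,J) \mapsto \sqrt{IJ}$, so it is an isomorphism of algebraic lattices with multiplication, and in particular restricts to an isomorphism on compact elements $I_{rad}(A)^c \cong I_{rad}(I(A)^c)^c$.

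The main obstacle I expect is the last step: verifying that the lattice isomorphism $I(A) \cong I(I(A)^c)$ descends to the radical quotient and is compatible with the twisted product $(I,J) \mapsto \sqrt{IJ}$. This is essentially a bookkeeping check, but one has to carefully track how prime $k$-ideals correspond across the isomorphism and confirm that a $k$-ideal of $I(A)^c$ is the radical of a finitely generated $k$-ideal precisely when its image in $I(A)$ is. Once this compatibility is established, composing the isomorphisms produces the desired identification $A/\sim \;\cong\; I_{rad}(A)^c$.
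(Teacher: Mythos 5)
Your proof follows essentially the same route as the paper's: factor the quotient through the idealization and then the radicalization, identify these with $I(A)^c$ and $I_{rad}(I(A)^c)^c$ respectively, and then use the lattice isomorphism $I(A)\cong I(I(A)^c)$ to see that primes, radicals, and compact elements match up, yielding $I_{rad}(A)^c\cong I_{rad}(I(A)^c)^c$. (One small terminology slip: the map $(I,J)\mapsto\sqrt{IJ}$ is the multiplication on $I_{rad}(A)$, not the ``twisted product,'' which in this paper refers to the operation on $A\times A$ used for congruences.)
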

\begin{proof}
Instead of quotienting by all relations at once, we may first quotient by relations of the form $x+1\sim 1$ then by relations of the form $x^2\sim x$.  In other words, the quotient semiring is the radicalization of the idealization of $A$.  This is $I_{rad}(I(A)^c)^c$, and we must show this semiring is isomorphic to $I_{rad}(A)^c$.  
	
We know there is an isomorphism $I(I(A)^c)\cong I(A)$.  It clearly preserves primality, and the description of radical ideals as intersections of prime ideals implies it must preserve radicalness as well.  Thus we have $I_{rad}(I(A)^c)\cong I_{rad}(A)$.  As an isomorphism, this preserves infinite joins, so preserves compactness and we have $I_{rad}(I(A)^c)^c\cong I_{rad}(A)^c$
\end{proof}


For an idempotent semiring $A$, we call $A$ complete if the least upper bound exists for any subset $M$ of $A$ with respect to the canonical partial order of $A$. Now, we introduce the Zariski space of a complete idealic semiring.  Such spaces have been studied in \cite{takagi2010construction}.

\begin{mydef}
Let $A$ be a complete idealic semiring. 
\begin{enumerate}
\item 
An element $\mathfrak{p}\in A$ is called \emph{prime} if $xy\leq \mathfrak{p}$ implies $x\leq \mathfrak{p}$ or $y\leq \mathfrak{p}$.	
\item 
The \emph{Zariski space} of $A$, denoted $\mathrm{Zar}(A)$, is the set of prime elements of $A$ with the topology given by closed subsets of the form $V(x) = \{\mathfrak{p} \textrm{ prime}\mid x\leq \mathfrak{p}\}$ for $x\in A$.	
\end{enumerate}
\end{mydef}

It is easy to check that this does define a topology.  The completeness is needed for closure under infinite intersections, while being idealic ensures that the empty set is closed.

For a semiring $A$, it is clear that the idempotent semirings $I(A)$ and  $I_{rad}(A)$ are both complete and idealic. The prime $k$-spectrum $\mathrm{Spec}_k A$ of a semiring $A$ is homeomorphic to the Zariski space of $I(A)$ as we will show next. Moreover, we will also show that $\mathrm{Zar}(I_{rad}(A))$ is homeomorphic to $\mathrm{Zar}(I(A))$ (hence homeomorphic to $\mathrm{Spec}_k A$). This is analogous to the fact that $\Spec R$ is homeomorphic to $\Spec R_{\textrm{rad}}$ for a commutative ring $R$.

\begin{pro}\label{proposition: zar}
Let $A$ be a semiring. Then, the prime $k$-spectrum $\mathrm{Spec}_k A$ is homeomorphic to the following Zariski spaces:
$$ \mathrm{Spec}_k A\cong \mathrm{Zar}(I(A)) \cong \mathrm{Zar}(I_{rad}(A))$$
\end{pro}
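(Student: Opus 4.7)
The plan is to establish the two homeomorphisms separately: first $\mathrm{Spec}_k A\cong \mathrm{Zar}(I(A))$, then $\mathrm{Zar}(I(A))\cong \mathrm{Zar}(I_{rad}(A))$.

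For the first, I would observe that the underlying sets coincide essentially by definition. An element $\mathfrak p\in I(A)$ is prime in the Zariski sense exactly when $IJ\le \mathfrak p$ (meaning $IJ\subseteq\mathfrak p$, the product of $k$-ideals) implies $I\subseteq\mathfrak p$ or $J\subseteq\mathfrak p$, which is precisely the definition of a prime $k$-ideal. It then remains to match topologies. The closed sub-basis for the hull-kernel topology on $\mathrm{Spec}_k A$ consists of sets $V(f)=\{\mathfrak p : f\in\mathfrak p\}$ for $f\in A$; since any prime $k$-ideal containing $f$ contains the smallest $k$-ideal $\langle f\rangle$, we may replace these by $V(\langle f\rangle)$. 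Using $V(I)\cap V(J)=V(I+J)$, $V(I)\cup V(J)=V(IJ)$ (where the union identity uses primeness together with $IJ\subseteq I\cap J$ and the definition of prime $k$-ideal), and $\bigcap_\alpha V(I_\alpha)=V\!\left(\sum_\alpha I_\alpha\right)$, the closed sets of $\mathrm{Spec}_k A$ are exactly $\{V(I): I\in I(A)\}$. These are precisely the closed sets of $\mathrm{Zar}(I(A))$ by definition, giving the first homeomorphism.

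For the second homeomorphism, the key point is that every prime $k$-ideal of $A$ is radical (being the intersection of the prime $k$-ideals containing it, itself included), so as sets $\mathrm{Zar}(I(A))\subseteq I_{rad}(A)$. I would then check that these prime $k$-ideals are exactly the prime elements of $I_{rad}(A)$. If $\mathfrak p$ is a prime $k$-ideal and $I,J\in I_{rad}(A)$ satisfy $\sqrt{IJ}\subseteq\mathfrak p$ (the product in $I_{rad}$), then $IJ\subseteq\sqrt{IJ}\subseteq\mathfrak p$, so $I\subseteq\mathfrak p$ or $J\subseteq\mathfrak p$ by primeness in $I(A)$. Conversely, if $\mathfrak p$ is prime in $I_{rad}(A)$ and $IJ\subseteq \mathfrak p$ for $I,J\in I(A)$, then passing to radicals gives $\sqrt{I}\cdot_{I_{rad}}\sqrt{J}=\sqrt{\sqrt I\cdot\sqrt J}=\sqrt{IJ}\subseteq\sqrt{\mathfrak p}=\mathfrak p$; the middle equality holds because both sides are the intersection of the same set of prime $k$-ideals. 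Primeness in $I_{rad}(A)$ then yields $\sqrt I\subseteq\mathfrak p$ or $\sqrt J\subseteq\mathfrak p$, hence $I\subseteq\mathfrak p$ or $J\subseteq\mathfrak p$.

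For the topology on the second homeomorphism, I would note that $I\subseteq\mathfrak p$ iff $\sqrt I\subseteq\sqrt{\mathfrak p}=\mathfrak p$, so every closed set $V(I)$ of $\mathrm{Zar}(I(A))$ with $I\in I(A)$ equals $V(\sqrt I)$ with $\sqrt I\in I_{rad}(A)$, which is closed in $\mathrm{Zar}(I_{rad}(A))$; conversely every $V(J)$ for $J\in I_{rad}(A)\subseteq I(A)$ is already closed in $\mathrm{Zar}(I(A))$. The families of closed sets therefore coincide, giving the homeomorphism.

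The main technical obstacle is the verification that primeness transfers between $I(A)$ and $I_{rad}(A)$ despite their different multiplications; this rests on the identity $\sqrt{\sqrt I\cdot\sqrt J}=\sqrt{IJ}$, which is best proved by comparing the two sides prime-by-prime and using Proposition \ref{radicaldescription} for the finitely generated case if needed. Once this identity is in hand, both homeomorphisms reduce to routine bookkeeping.
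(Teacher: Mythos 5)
Your proof is correct and follows essentially the same route as the paper's: you identify the prime elements of $\mathrm{Zar}(I(A))$ with prime $k$-ideals, match the topologies via the common description of closed sets as $V(I)$ for $k$-ideals $I$, and then transfer primeness between $I(A)$ and $I_{rad}(A)$ using the identity $\sqrt{\sqrt{I}\sqrt{J}}=\sqrt{IJ}$ verified prime-by-prime. You spell out the topology-matching step for $\mathrm{Spec}_k A\cong\mathrm{Zar}(I(A))$ in slightly more detail than the paper (which only asserts it), but the argument is the same in substance.
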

\begin{proof}
By definition, any prime element $\mathfrak{p}$ of $\mathrm{Zar}(I(A))$ is a prime $k$-ideal $\mathfrak{p}$ of $A$ and vice versa. Also, for any element $I\in I(A)$ (or equivalently, any $k$-ideal $I$ of $A$), clearly, $I\leq\mathfrak{p}$ in $I(A)$ is same as $I\subseteq \mathfrak{p}$ as $k$-ideals. Thus, $ \mathrm{Spec}_k A$ is homeomorphic to $\mathrm{Zar}(I(A))$.\\
Now, if $\mathfrak{p}$ is a prime element of $I(A)$, then $\mathfrak{p}$ is radical and so $\mathfrak{p}\in I_{rad}(A)$.  If $I,J\in I_{rad}(A)$ satisfy $IJ\leq \mathfrak{p}$ in $I_{rad}(A)$, then $\sqrt{IJ}\subseteq \mathfrak{p}$ and so $I\subseteq \mathfrak{p}$ or $J\subseteq \mathfrak{p}$.  Hence $\mathfrak{p}$ is prime in $I_{rad}(A)$.  
	
Conversely, let $\mathfrak{p}$ be prime in $I_{rad}(A)$. If $I,J\in I(A)$ satisfy $IJ\subseteq \mathfrak{p}$, then $\sqrt{\sqrt{I}\sqrt{J}}=\sqrt{IJ}\subseteq \mathfrak{p}$ because $IJ$ and $\sqrt{I}\sqrt{J}$ are contained in the same prime $k$-ideals and because $\mathfrak{p}$ is radical.  Then either $\sqrt{I}\subseteq \mathfrak{p}$ or $\sqrt{J}\subseteq \mathfrak{p}$.  This implies $I\subseteq \mathfrak{p}$ or $J\subseteq \mathfrak{p}$.  Thus $ \mathrm{Zar}(I(A)) = \mathrm{Zar}(I_{rad}(A))$ as sets.
	
An ideal $I$ induces the same closed subset of $ \mathrm{Zar}(I(A))$ as its radical and so every closed subset has the form $V(I)=\{\mathfrak{p}\,\mathrm{prime}\mid I\subseteq \mathfrak{p}\}$ for some radical ideal $I$.  Similarly, every closed subset of $\mathrm{Zar}(I_{rad}(A))$ has the form $\{\mathfrak{p}\,\mathrm{prime}\mid \sqrt{I}\subseteq \sqrt{\mathfrak{p}}\}$ for some radical ideal $I$.  But $\sqrt{I}\subseteq \sqrt{\mathfrak{p}}$ is the same as $I\subseteq \mathfrak{p}$ since $\mathfrak{p}$ is radical. This proves that $\mathrm{Zar}(I(A))$ is homoemorphic to $\mathrm{Zar}(I_{rad}(A))$. 
\end{proof}

\begin{mythm}
Let $A$ be an idempotent semiring.  Then the semiring $I_{rad}(A)^c$, which by Proposition \ref{radquot} is a quotient of $A$, has the same prime $k$-spectrum as $A$, i.e. there is a homeomorphism: $$\mathrm{Spec}_k A \cong \mathrm{Spec}_k \left( I_{rad}(A)^c \right).$$
\end{mythm}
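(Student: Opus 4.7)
The plan is to reduce the theorem to Proposition \ref{proposition: zar} and Theorem \ref{theorem: allgat2}, bypassing any direct manipulation of prime $k$-ideals. The key observation is that once we write both spectra as Zariski spaces of complete idealic semirings, the theorem becomes an algebraic statement: the two semirings $I_{rad}(A)$ and $I(I_{rad}(A)^c)$ are isomorphic in the appropriate sense.

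First I would apply Proposition \ref{proposition: zar} to both $A$ and $B := I_{rad}(A)^c$ to obtain
\[
\mathrm{Spec}_k A \cong \mathrm{Zar}(I_{rad}(A)) \quad \text{and} \quad \mathrm{Spec}_k B \cong \mathrm{Zar}(I(B)).
\]
Thus it suffices to exhibit a homeomorphism $\mathrm{Zar}(I_{rad}(A)) \cong \mathrm{Zar}(I(B))$. Since the Zariski space of a complete idealic semiring is determined by the partial order together with the multiplication, it will suffice in turn to show that $I_{rad}(A)$ and $I(B)$ are isomorphic as complete idealic semirings (i.e.\ as algebraic lattices with multiplication).

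To produce such an isomorphism, I would invoke Lemma \ref{lemma: 3.34}, which tells us that $I_{rad}(A)$ is an algebraic lattice with multiplication. Theorem \ref{theorem: allgat2}(2) then furnishes a multiplication-preserving lattice isomorphism
\[
I_{rad}(A) \;\cong\; S(I_{rad}(A)^c) \;=\; S(B).
\]
Next, since $B = I_{rad}(A)^c$ is (radical) idealic by Proposition \ref{radquot}, Remark \ref{satimplideal} yields $S(B) = I(B)$, so the isomorphism above becomes $I_{rad}(A) \cong I(B)$ as complete idealic semirings. Composing homeomorphisms then gives the result.

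The only subtle point—what I would regard as the main obstacle—is verifying that this algebraic isomorphism genuinely induces a homeomorphism on Zariski spaces. This requires checking that the notion of prime element and the subbasic closed sets $V(x)$ in $\mathrm{Zar}$ depend only on the underlying partially ordered set and the multiplication, both of which are preserved by the isomorphism from Theorem \ref{theorem: allgat2}(2); once this is confirmed, the homeomorphism is essentially tautological. In particular, there is no need to re-examine the explicit description of prime $k$-ideals of $B$ in terms of those of $A$—the algebraic lattice machinery has already packaged that correspondence for us.
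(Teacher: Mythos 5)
Your proposal is correct and follows essentially the same route as the paper: both reduce to the algebraic isomorphism $I_{rad}(A)\cong I(I_{rad}(A)^c)$ and then transfer it through Proposition \ref{proposition: zar} to a homeomorphism of Zariski spaces. The only difference is that the paper compresses the key isomorphism into the phrase ``by the theory of algebraic lattices,'' while you unfold it explicitly via Theorem \ref{theorem: allgat2}(2) together with Remark \ref{satimplideal}—a helpful elaboration, but not a genuinely different argument.
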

\begin{proof}
By the theory of algebraic lattices, $I(I_{rad}(A)^c)\cong I_{rad}(A)$.  Thus, from Proposition \ref{proposition: zar}, we have
\[
\mathrm{Spec}_k A\cong \mathrm{Zar}(I_{rad}(A))\cong \mathrm{Zar}(I(I_{rad}(A)^c))\cong\mathrm{Spec}_k \left( I_{rad}(A)^c\right).
\]
\end{proof}

We have shown that the category of spectral spaces is antiequivalent to the category of radical idealic semirings by Theorem \ref{theorem: equivalence of cat}.  We are now finally ready to state explicitly how every spectral space arises from a radical idealic semiring. 
\vspace{2mm}

Let $X$ be a topological space.  The collection of all open sets of $X$, ordered by inclusion, is a complete lattice with {\it join} given by the union and {\it meet} given by the interior of the intersection. Let $\mathcal{O}(X)$ denote this complete lattice of open subsets of $X$.  The compact objects of $\mathcal{O}(X)$ are the compact open subsets.  We define the product of open subsets to be the intersection.

\begin{lem}Let $X$ be a spectral space.  Then $\mathcal{O}(X)$ is an algebraic lattice with multiplication.
\end{lem}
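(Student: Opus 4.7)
The plan is to identify the compact elements of $\mathcal{O}(X)$ with the quasi-compact open subsets of $X$ in the topological sense, and then verify each clause in the definition of an algebraic lattice with multiplication directly from the spectral space axioms. The definition preceding the lemma already specifies the product as $U\cdot V = U\cap V$, so multiplication is built-in; what remains is to check that $X$ serves as a compact identity, that the operation is associative and commutative, that it distributes over arbitrary joins, and that it preserves compactness, together with algebraicity of the underlying lattice.

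First I would verify the identification $\mathcal{O}(X)^c = \{U\subseteq X\mid U\text{ is open and quasi-compact}\}$. This is essentially tautological: a family $\{V_i\}_{i\in\Gamma}\subseteq \mathcal{O}(X)$ is a cover of $U$ in the lattice sense in the sense of Definition \ref{compactdef} exactly when $U\subseteq \bigcup_{i\in\Gamma} V_i$ as subsets of $X$, and a finite subcover has the same meaning in either setting. Granted this, algebraicity follows immediately from the spectral space axiom that the quasi-compact open subsets form a basis: every open $U\subseteq X$ is the union of the quasi-compact opens contained in it, i.e.\ a join of compact elements of $\mathcal{O}(X)$.

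For the multiplicative structure, the identity is $X$ itself, which lies in $\mathcal{O}(X)^c$ because a spectral space is by definition quasi-compact. Associativity and commutativity of intersection are clear, and distributivity over arbitrary joins is the standard set-theoretic identity
\[
U\cap \bigcup_{i\in\Gamma} V_i \;=\; \bigcup_{i\in\Gamma} (U\cap V_i),
\]
which is the relevant statement in $\mathcal{O}(X)$ because joins there are given by unions. Preservation of compactness under the product is exactly the remaining half of the spectral space axiom: a finite intersection of quasi-compact open subsets is again quasi-compact open, so $U,V\in \mathcal{O}(X)^c$ implies $U\cap V\in \mathcal{O}(X)^c$.

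There is really no single hard step here; the proof is a direct unpacking of the definitions of spectral space and of algebraic lattice with multiplication. The closest thing to a subtlety is making sure that the lattice-theoretic notion of compactness coincides with the topological one, so I would make that identification explicit at the outset and then package the remaining verifications as a short list.
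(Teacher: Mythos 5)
Your proof is correct and follows essentially the same route as the paper's: identify the lattice-compact elements with the quasi-compact opens, invoke the spectral-space axioms for algebraicity and closure of compactness under intersection, take $X$ as the compact identity, and dispatch associativity/distributivity via the set-theoretic facts about intersections and unions. The only difference is that you spell out the identification $\mathcal{O}(X)^c = \{\text{quasi-compact opens}\}$ explicitly, which the paper takes as already noted in the surrounding discussion.
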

\begin{proof}
Since $X$ is spectral, then quasi-compact open subsets form a basis for $X$, so $\mathcal{O}(X)$ is algebraic.  Furthermore, the intersection of two quasi-compact open subsets of a spectral space is quasi-compact, so the product of compact elements is compact.  Additionally $X$ is the multiplicative identity, and is compact.  Distributivity and associativity follow from the set-theoretic fact that intersections are associative and distribute over unions.  
\end{proof}

$\mathcal{O}(X)$ is obtained from the lattice of closed subsets by reversing the order of inclusion.  If $X=\Spec R$ for a commutative ring $R$, there is an order reversing correspondence between closed subsets and radical ideals, so $\mathcal{O}(X)\cong I_{rad}(R)$.  Then, one has the following homeomorphisms:
\[
X=\mathrm{Spec} R \cong \mathrm{Spec}_s (I_{rad}(R)^c) \cong \mathrm{Spec}_s \left(\mathcal{O}(X)^c\right),
\]
showing that a spectral space is homeomorphic to the prime $k$-spectrum of an idempotent semiring. We now show that this isomorphism can be constructed explicitly without appeal to Hochster's theorem.

\begin{mythm}\label{theorem: spectral theorem}
Let $X$ be a spectral space and $\mathcal{O}(X)$ be the lattice of open subsets.  
\begin{enumerate}
\item
$\mathcal{O}(X)^c$ is a radical idealic semiring.
\item 
$X\cong \mathrm{Zar}(\mathcal{O}(X))\cong\mathrm{Spec}_s (\mathcal{O}(X)^c)$.	
\end{enumerate}
\end{mythm}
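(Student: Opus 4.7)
The first assertion is immediate from the preceding lemma together with Theorem \ref{theorem: allgat2}. Since $\mathcal{O}(X)$ is an algebraic lattice with multiplication, Theorem \ref{theorem: allgat2}(2) tells us that $\mathcal{O}(X)^c$ (the quasi-compact open subsets of $X$) is an idempotent semiring whose addition is union and whose multiplication is intersection. Radicality is then the identity $U\cap U=U$ for open $U$, so that $U^2=U$ in $\mathcal{O}(X)^c$, and the idealic property follows because $X$ is quasi-compact and is the maximum element of $\mathcal{O}(X)$.

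For the second homeomorphism in (2), I would apply Theorem \ref{theorem: allgat2}(2) to $L=\mathcal{O}(X)$ to obtain a multiplication-preserving lattice isomorphism $\mathcal{O}(X)\cong S(\mathcal{O}(X)^c)$. Because $\mathcal{O}(X)^c$ is idealic, Remark \ref{satimplideal} identifies every $k$-subsemigroup with a $k$-ideal, so $S(\mathcal{O}(X)^c)=I(\mathcal{O}(X)^c)$ as algebraic lattices with multiplication. Applying the construction $\mathrm{Zar}(-)$ and using Proposition \ref{proposition: zar}, I then obtain $\mathrm{Zar}(\mathcal{O}(X))\cong \mathrm{Zar}(I(\mathcal{O}(X)^c))\cong \mathrm{Spec}_k(\mathcal{O}(X)^c)$, which is the right-hand side of (2) (matching the notation $\mathrm{Spec}_s$ used in the theorem statement).

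For the remaining homeomorphism $X\cong \mathrm{Zar}(\mathcal{O}(X))$, I would construct the map $\phi\colon X\to \mathrm{Zar}(\mathcal{O}(X))$ by $\phi(x)=X\setminus\overline{\{x\}}$ and verify four points in order. First, $\phi(x)$ is prime in $\mathcal{O}(X)$: if $V\cap W\subseteq X\setminus\overline{\{x\}}$, then $\overline{\{x\}}\subseteq (X\setminus V)\cup(X\setminus W)$, and the irreducibility of $\overline{\{x\}}$ forces $\overline{\{x\}}\subseteq X\setminus V$ or $\overline{\{x\}}\subseteq X\setminus W$, i.e.\ $V\subseteq\phi(x)$ or $W\subseteq\phi(x)$. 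Second, $\phi$ is injective by the $T_0$ axiom, which distinguishes points by their closures. Third, $\phi$ is surjective by soberness of spectral spaces: a prime open $\mathfrak{p}$ has an irreducible closed complement, whose unique generic point $x$ satisfies $\phi(x)=\mathfrak{p}$. Fourth, the equivalence $U\subseteq X\setminus\overline{\{x\}}\Longleftrightarrow x\notin U$ gives $\phi^{-1}(V(U))=X\setminus U$, so $\phi$ carries closed sets to closed sets bijectively and is therefore a homeomorphism.

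The main obstacle is the surjectivity step, which is the only place the sober axiom is essential. Some care is also required with the primality convention: to get a genuine bijection with points one must implicitly exclude the top element $X$ from the set of prime elements of $\mathcal{O}(X)$ (corresponding to the empty closed set, which has no generic point), consistent with the ``proper'' convention used for prime $k$-ideals in Proposition \ref{proposition: zar}.
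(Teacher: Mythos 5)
Your proof is correct and follows essentially the same route as the paper: part (1) is immediate, the second homeomorphism in (2) comes from Theorem \ref{theorem: allgat2} plus Remark \ref{satimplideal} and Proposition \ref{proposition: zar}, and the first homeomorphism is the standard point-to-generic-point bijection. The only cosmetic difference is that you work directly with $\phi(x)=X\setminus\overline{\{x\}}$ in $\mathcal{O}(X)$, whereas the paper first passes to the lattice $\mathcal{C}(X)$ of closed subsets with reversed order and uses $p\mapsto\overline{\{p\}}$; these are the same map under the order-reversing isomorphism $\mathcal{C}(X)\cong\mathcal{O}(X)$. Your closing caveat about excluding the top element from the prime elements of $\mathcal{O}(X)$ is a genuine and worthwhile observation: as written, the definition of a prime element in a complete idealic semiring does not impose properness, so $X$ itself would vacuously satisfy the primality condition but corresponds to the empty closed subset, which has no generic point; the paper silently relies on the convention that irreducible closed sets are nonempty, and making that explicit (as you do) is the right fix to keep the bijection honest and consistent with the properness built into prime $k$-ideals in Proposition \ref{proposition: zar}.
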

\begin{proof}
Part (1) is trivial.  Since $\mathcal{O}(X)$ is algebraic and $\mathcal{O}(X)^c$ is idealic as a spectral space $X$ itself is compact, from Theorem \ref{theorem: allgat2}, one has that
\begin{equation}\label{eq 1}
\mathcal{O}(X)\cong I(\mathcal{O}(X)^c),
\end{equation}
and hence
\begin{equation}\label{eq 2}
\mathrm{Zar}(\mathcal{O}(X))\cong\mathrm{Spec}_s (\mathcal{O}(X)^c).
\end{equation}
Let $\mathcal{C}(X)$ be the lattice of closed subsets with the reverse inclusion order.  Then, obviously we have that
\begin{equation}\label{eq 3}
\mathcal{C}(X)\cong \mathcal{O}(X).
\end{equation}

We claim that $X\cong \mathrm{Zar}(\mathcal{C}(X))$ as topological spaces; this will prove the desired result by \eqref{eq 2} and \eqref{eq 3}. Indeed, prime elements of $\mathcal{C}(X)$ are irreducible closed subsets of $X$, which are in one-to-one correspondence with points of $X$, so we have $X\cong \mathrm{Zar}(\mathcal{C}(X))$ as sets. To be specific, we have the following set bijection:
\[
f:X \to \mathrm{Zar}(\mathcal{C}(X)), \quad p \mapsto \bar{p},
\]
where $\bar{p}$ is the topological closure of $\{p\}$ in $X$. Then, $f$ is injective since $X$ is a $T_0$ space; $x \in \bar{y}$ and $y \in \bar{x}$ happens at the same time only when $x=y$. The function $f$ is also clearly surjective, since any irreducible closed subset $Y$ of $X$ should be of the form $\overline{\eta_{Y}}$, where $\eta_{Y}$ is the generic point of $Y$. 

Now, we show that $f$ is continuous. Suppose $Z\subseteq X$ such that $f(Z)$ is closed.  Then, we have
\[
 f(Z) = V(Y) = \{ \bar{p} \mid Y \leq  \bar{p} \} \textrm{ for some } Y \in \mathcal{C}(X). 
\]
It follows from the definition of $\leq$ in $\mathcal{C}(X)$ that
 \[
f(Z) = \{ \bar{p} \mid \bar{p} \textrm{ is a subset of $Y$} \} = \{ \bar{p} \mid p \in Y \} = f(Y),
 \]
where the second to last equality uses that $Y$ is closed.  Since $f$ is injective, we have that $Z = Y$. In particular, $Z$ is closed in $X$ and $f$ is continuous.  Showing $f^{-1}$ is continuous is similar. This proves our claim.
\end{proof}

\subsection{Spectral spaces arising from congruence relations}

In this subsection, we study spectral spaces arising from sets of congruence relations on a semiring. In particular, we prove that the set of prime congruences (as in \cite{joo2014prime}) of an idempotent semiring $A$ is a spectral space. We first recall some definitions. 

Let $A$ be an idempotent semiring. In \cite{bertram2017tropical}, Bertram and Easton introduced the notion of prime congruences on an idempotent semiring to prove a version of tropical nullstellensatz, which was then fully proved\footnote{Bertram and Easton proved one inclusion of the set equality for tropical nullstellensatz.} by Jo\'o and Mincheva \cite{joo2014prime}. One first defines the \emph{twisted product} $x \cdot_t y$ of elements $x=(x_1,x_2),y=(y_1,y_2) \in A \times A$ as follows:
\[
(x\cdot_t y):=(x_1y_1+x_2y_2,x_1y_2+x_2y_1). 
\]
Now, a congruence $C$ on $A$ is said to be \emph{prime} if $C$ is proper (i.e, $C \neq A \times A$) and satisfies the following condition:
\[
\textrm{If } x\cdot_t y \in C \textrm{ then } x \in C \textrm{ or } y \in C \quad \forall x,y \in A \times A.
\]

We let $\Spec_\mathfrak{c} A$ be the set of prime congruences. We impose the \emph{hull-kernel topology} on $\Spec_\mathfrak{c} A$, which is defined as follows:

\begin{mydef}
	Let $S$ be a set. One may impose the \emph{hull-kernel topology} on the power set $2^S$ by declaring that the open sub-basis of the topology is given by the sets of the form $$D(F):= \{I \in 2^S \mid F \not \subseteq I\},$$ where $F$ is a finite subset of $S$. We will denote by $V(F)$ the complement of $D(F)$ in $2^S$. 	
\end{mydef}

\begin{pro}\label{proposition: subcongruences}
	Let $A$ be a semiring and $C$ be a congruence on $A$. Then,
	\begin{enumerate}
		\item 
		The collection of subcongruences $S_C\subseteq 2^{(A \times A)}$ of the congruence $C$, endowed with the hull-kernel topology induced from $2^{(A \times A)},$ is a spectral space.	
		\item 
		If $C$ is finitely generated, the collection of all proper subcongruences of the congruence $C$ and the collection of all proper prime subcongruences of the congruence $C$ are spectral spaces.
	\end{enumerate}
\end{pro}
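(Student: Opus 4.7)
The plan is to adapt the strategy of Proposition \ref{sat prime spec}: realize each of the collections as a patch closed subset of the ambient spectral space $2^{A\times A}$ equipped with the hull-kernel topology, and then invoke Hochster's theorem that patch closed subsets of spectral spaces are themselves spectral \cite[Proposition 9]{hochster1969prime}. Recall that for any finite $F \subseteq A \times A$, the set $D(F)$ is a basic quasi-compact open and $V(F)$ is a basic closed of $2^{A \times A}$, so both are patch closed; consequently arbitrary intersections and finite unions of sets of the form $V(\{x\})$ and $D(\{x\})$ remain patch closed.

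For part (1), I would translate each axiom defining a subcongruence of $C$ into a patch closed condition on $E \subseteq A \times A$. Containment $E \subseteq C$ is $\bigcap_{(a,b) \notin C} D(\{(a,b)\})$; reflexivity is $\bigcap_{a \in A} V(\{(a,a)\})$; symmetry is $\bigcap_{(a,b)} \bigl( D(\{(a,b)\}) \cup V(\{(b,a)\}) \bigr)$; transitivity is $\bigcap_{a,b,c} \bigl( D(\{(a,b)\}) \cup D(\{(b,c)\}) \cup V(\{(a,c)\}) \bigr)$; and additive and multiplicative compatibility are the analogous intersections indexed over quadruples $(a,b,c,d)$. Each single constraint is a finite union of patch closed sets, hence patch closed. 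Intersecting all of them exhibits $S_C$ as a patch closed subset of $2^{A \times A}$, which is therefore spectral.

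For part (2), I would first observe that a congruence $E$ on $A$ is proper if and only if $(0,1) \notin E$: once $(0,1) \in E$, multiplicative compatibility yields $(0,a) \in E$ for every $a \in A$, and then symmetry together with transitivity force $(a,b) \in E$ for every pair. Hence the collection of proper subcongruences of $C$ is $S_C \cap D(\{(0,1)\})$, still patch closed. For primality I would further impose, for each pair $x, y \in A \times A$, the patch closed condition
\[
E \in D(\{x \cdot_t y\}) \cup V(\{x\}) \cup V(\{y\}),
\]
which encodes ``$x \cdot_t y \in E$ implies $x \in E$ or $y \in E$.'' Intersecting this family over all $(x,y)$ with $S_C \cap D(\{(0,1)\})$ produces the set of proper prime subcongruences of $C$ as a patch closed subset of $2^{A \times A}$, and hence as a spectral space.

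The main technical hinge is the simultaneous patch-closedness of the basic building blocks $V(\{x\})$ and $D(\{x\})$, which follows directly from Hochster's description of the patch topology on $2^S$; once this is in hand, each congruence and primality axiom translates mechanically into a first-order implication whose graph is a finite union of patch closed sets, and the argument becomes systematic bookkeeping in the spirit of Proposition \ref{sat prime spec}. I would expect the finite-generation hypothesis on $C$ in (2) to play an auxiliary role (for instance in restricting the index sets in the intersections above to a finite generating set of $C$) rather than being essential to the patch-closed argument itself.
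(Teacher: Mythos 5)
Your argument for part (1) is correct and takes essentially the same route as the paper: the paper likewise writes $S_C$ as an intersection of patch-closed constraints encoding the equivalence-relation axioms, closure under the semiring operations, and containment in $C$. (The paper phrases the closure axioms via the addition and twisted product on $A\times A$ — conditions of the form $D(a)\cup D(b)\cup V(a+b)$ and $D(a)\cup V(a\cdot_t b)$ for $a,b\in A\times A$ — while you index over quadruples of elements of $A$; these are equivalent bookkeeping choices.) Once each condition is seen to be a finite Boolean combination of sets $V(\{x\})$ and $D(\{x\})$, hence patch closed, the conclusion is Hochster's result that patch closed subsets of spectral spaces are spectral.

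Part (2), however, has a genuine gap: you have misread what \emph{proper subcongruence of $C$} means. In the paper it means a subcongruence $E$ with $E\subsetneq C$, not one with $E\neq A\times A$. Your set $S_C\cap D(\{(0,1)\})$ only removes $A\times A$; when $C\neq A\times A$ (so $(0,1)\notin C$ already), this intersection is all of $S_C$, including $C$ itself, so it is not the collection of proper subcongruences. The paper instead intersects with $D(F)$, where $F$ is a \emph{finite generating set} of $C$: among subcongruences $E\subseteq C$, the condition $F\not\subseteq E$ is exactly $E\neq C$, since $F\subseteq E\subseteq C$ with $E$ a congruence forces $E=C$. This is precisely why the finite-generation hypothesis is essential, not ``auxiliary'' as you conjecture: without it, the condition $C\not\subseteq E$ would have to be written as the infinite union $\bigcup_{z\in C} D(\{z\})$, which is open but has no reason to be patch closed, and the whole argument would break. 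Your primality constraint $D(\{x\cdot_t y\})\cup V(\{x\})\cup V(\{y\})$ matches the paper's; once the ``proper'' condition is corrected to $D(F)$, the rest of part (2) goes through. (Your observation that $(0,1)\notin E$ characterizes $E\neq A\times A$ is fine — and indeed, in the corollary the paper takes $C=A\times A$ with generating set $\{(1,0)\}$, where your version coincides with the paper's — but it does not prove the stated proposition for general finitely generated $C$.)
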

\begin{proof}
	(1) For any $a,b \in A\times A$, let $a\cdot_tb$ denote the twisted product of $a$ and $b$. The set of equivalence relations
	\begin{equation*}
	\mathcal{E}=[\underset{x,y,z\in A}\cap V(x, z) \cup D(x, y) \cup D(y, z)] \cap [\underset{x,y\in A}\cap (V(x, y) \triangle V(y, x))] \cap [\underset{x\in A}\cap V(x, x)]
	\end{equation*}
	is in the Boolean algebra generated by sets of the form $V(x, y)$ so is a patch closed subset (here $\triangle$ is the symmetric difference).  Now it can be easily seen that
	\begin{equation*}
	S_C=\mathcal{E} \cap [\underset{a,b\in A\times A}\cap D(a)\cup D(b)\cup V(a+b)] \cap [\underset{a,b\in A\times A}\cap D(a)\cup V(a\cdot_tb)]\cap [\underset{z\in A\times A\setminus C}\cap D(z)].
	\end{equation*}
	is a patch closed subset of $2^{(A \times A)}$ and therefore it is spectral.
	
	(2) Let $F$ be a finite set of generators of $C$. Then, the patch closed subset $D(F)\cap S_C$ of $2^{(A \times A)}$ gives the collection of all proper subcongruences of the congruence $\mathcal{C}$ and therefore it is spectral. The collection of all proper prime subcongruences of the congruence $C$ is also a spectral space since it is given by the following patch closed subset of $2^{(A \times A)}$
	\begin{equation*}
	D(F)\cap S_C \cap[\underset{a,b\in A\times A}\cap D(a\cdot_tb)\cup V(a)\cup V(b)].
	\end{equation*}
\end{proof}

\noindent In particular, Proposition \ref{proposition: subcongruences} implies that $\Spec_\mathfrak{c} A\subseteq 2^{(A \times A)}$ is a spectral space as follows. 

\begin{cor}
	Let $A$ be a semiring. 
	\begin{enumerate}
		\item 
		The collection of all congruences and the collection of all proper congruences of $A$ are spectral spaces with the hull-kernel topology.
		\item 
		If $A$ is an idempotent semiring, the collection of all prime congruences $\Spec_\mathfrak{c} A$ of $A$ is a spectral space with the hull-kernel topology.
	\end{enumerate}
\end{cor}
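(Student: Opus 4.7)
My plan is to deduce both parts directly from Proposition \ref{proposition: subcongruences} by taking the ``ambient'' congruence $C$ to be the total congruence $A \times A$. The set $S_{A \times A}$ in Proposition \ref{proposition: subcongruences}(1) is, tautologically, the set of all congruences on $A$, since every congruence on $A$ is a subcongruence of $A \times A$. This immediately yields the first assertion of part~(1).

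To obtain the sets of \emph{proper} congruences and of \emph{prime} congruences as patch closed subsets, I would invoke Proposition \ref{proposition: subcongruences}(2), which requires the ambient congruence to be finitely generated. The key claim is that the total congruence $A \times A$ is generated as a congruence by the single pair $(0,1)$. This is a one-line computation using the four-step recipe recalled in \S2: compatibility with multiplication applied to $(0,1)$ and the diagonal element $(a,a)$ (which lies in every congruence by reflexivity) gives $(0\cdot a,\, 1\cdot a) = (0,a)$ for any $a \in A$, and then $(a,b)$ for arbitrary $a,b$ follows by symmetry and transitivity, chaining $(a,0)$ with $(0,b)$. Hence $A \times A$ is finitely generated, with generating set $F = \{(0,1)\}$.

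With this finite generation in hand, Proposition \ref{proposition: subcongruences}(2) applies with $C = A \times A$ and $F = \{(0,1)\}$: the collection of proper subcongruences of $A \times A$ is literally the collection of proper congruences on $A$, and the collection of proper prime subcongruences of $A \times A$ is precisely $\Spec_\mathfrak{c} A$, since prime congruences are proper by definition. Both are therefore spectral, finishing parts~(1) and~(2).

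I expect no real obstacle; the corollary is essentially a repackaging of Proposition \ref{proposition: subcongruences}. The only point worth flagging is that the idempotency hypothesis in part~(2) plays no role in the topological argument itself and is included only because prime congruences in the sense of the twisted product are traditionally studied in the idempotent setting following \cite{bertram2017tropical,joo2014prime}.
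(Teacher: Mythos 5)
Your proposal is correct and follows essentially the same route as the paper: both take $C = A\times A$ in Proposition~\ref{proposition: subcongruences} and observe that the total congruence is generated by a single pair (the paper writes $(1,0)$, you write $(0,1)$; these are the same by the symmetrization step of the generating recipe). Your remark that idempotency plays no role in the topological argument is also accurate — the patch-closedness computation in Proposition~\ref{proposition: subcongruences}(2) uses only the twisted product, which is defined over any semiring.
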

\begin{proof}
	The result follows from Proposition \ref{proposition: subcongruences} by taking $C=A\times A$ which is generated by the element $(1,0)\in A\times A$.
\end{proof}

\begin{mydef}
	Let $A$ be a semiring, $C$ a congruence on $A$, and $S_C$ the poset of all subcongruences of $C$. A closure operation $c:S_C \to S_C$ is said to be \emph{of finite type} if for any $D\in S_C$, 
	\[
	D^c=\bigcup\{E^c \mid E \subseteq D,E \in S_C,\textrm{ $E$ is finitely generated}\}.
	\]
\end{mydef}
 We will discuss examples of closure operation on $S_{C}$ in Section \ref{Cl by cong}. We will now show that given a finite type closure operation on $S_C$, the collection of all subcongruences of $C$ which remain fixed under the closure operation, forms a spectral space. The proof of this is analogues to the proof of \cite[Proposition 3.4]{Fo}, but we include it here for completeness.
\begin{pro}\label{proposition: of fnite type}
	Let $A$ be a semiring and $C$ be a congruence on $A$. Let $c$ be a closure operation of finite type on $S_C$ (as in Proposition \ref{proposition: subcongruences}). Then the following set
	\[
	X:=\{D \in S_C \mid D^c=D\}
	\]
	is a spectral space. 
\end{pro}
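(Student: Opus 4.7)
The plan is to mimic the argument of Proposition \ref{proposition: fnite type and spectral space} essentially verbatim, with ideals replaced by subcongruences of $C$. Since $S_C$ is already known to be a spectral space by Proposition \ref{proposition: subcongruences}(1), we have a natural subbasis $\mathbb{S}$ on $X$ induced from the hull-kernel topology of $2^{A\times A}$, and by Finocchiaro's criterion (Theorem \ref{2.2}) it suffices to verify that $X$ is $T_0$ (which is inherited from $2^{A\times A}$) and that for every ultrafilter $\mathcal{U}$ on $X$ the set
\[
X_{\mathbb{S}}(\mathcal{U}) = \{D \in X \mid \forall S \in \mathbb{S},\ D \in S \iff S \in \mathcal{U}\}
\]
is nonempty.

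To produce a witness, I would define
\[
D_{\mathcal{U}} := \{z \in C \mid V(z) \cap X \in \mathcal{U}\},
\]
where $V(z) = \{D' \in 2^{A\times A} \mid z \in D'\}$. The standard ultrafilter argument (using that $\mathcal{U}$ is closed under finite intersection, is upward-closed, and contains exactly one of $Y$ or its complement for each $Y$) shows that $D_{\mathcal{U}}$ meets each subbasic condition $D_{\mathcal{U}} \in S \iff S \in \mathcal{U}$ for $S \in \mathbb{S}$. One then checks that $D_{\mathcal{U}}$ is actually a subcongruence of $C$: the conditions defining $S_C$ are the finite Boolean combinations used in the proof of Proposition \ref{proposition: subcongruences}(1), and each of these is preserved because $\mathcal{U}$ is an ultrafilter.

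The main content is showing $D_{\mathcal{U}}^c \subseteq D_{\mathcal{U}}$, so that $D_{\mathcal{U}} \in X$. Here I use that $c$ is of finite type: given $z \in D_{\mathcal{U}}^c$, there is a finitely generated subcongruence $E \subseteq D_{\mathcal{U}}$ with $z \in E^c$. By monotonicity $z$ lies in $F^c$ for every subcongruence $F$ containing $E$. If $E$ is generated by $\{z_1,\ldots,z_r\}$, then
\[
\bigcap_{i=1}^r V(z_i) \cap X \subseteq V(z) \cap X,
\]
because any $D \in X$ containing each $z_i$ contains the subcongruence $E$ (generated by them) and hence contains $E^c \ni z$. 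Each $V(z_i) \cap X$ is in $\mathcal{U}$ by construction of $D_{\mathcal{U}}$, so the finite intersection lies in $\mathcal{U}$, and by upward closure $V(z) \cap X \in \mathcal{U}$. Thus $z \in D_{\mathcal{U}}$, giving $D_{\mathcal{U}}^c \subseteq D_{\mathcal{U}}$ and the reverse inclusion from extensivity.

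The only conceptual obstacle is verifying that the constructed $D_{\mathcal{U}}$ is genuinely a subcongruence of $C$ rather than merely a subset of $A \times A$; this is the analogue of checking that $I_{\mathcal{U}}$ in Proposition \ref{proposition: fnite type and spectral space} is an ideal. However, since Proposition \ref{proposition: subcongruences}(1) has already expressed $S_C$ as a patch-closed subset cut out by Boolean combinations of subbasic sets $V(\cdot)$ and $D(\cdot)$, and ultrafilters commute with all such Boolean operations, this verification is routine.
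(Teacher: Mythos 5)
Your proof follows the paper's argument essentially verbatim: the same witness $D_{\mathcal{U}} = \{z \in C \mid V(z)\cap X \in \mathcal{U}\}$, the same appeal to Finocchiaro's criterion, and the same finite-type argument via the inclusion $\bigcap_i V(z_i)\cap X \subseteq V(z)\cap X$. If anything you are slightly more careful than the paper in explicitly flagging the need to verify that $D_{\mathcal{U}}$ is a subcongruence of $C$ — the paper leaves that routine ultrafilter check implicit.
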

\begin{proof}
Let $\mathcal{U}$ be an ultrafilter on $X$ and $\mathbb{S}$ is the subbasis of $X$ (induced from the hull-kernel topology of $2^{A \times A}$). By Theorem \ref{2.2}, it is enough to prove that
\begin{equation*}
X_{\mathbb{S}}({\mathcal{U}}):= \{x \in X\ |\ [\forall S\in \mathbb{S}, x \in S \iff S \in \mathcal{U}]\} \neq \emptyset.
\end{equation*}
Consider the set $D_{\mathcal{U}}:=\{a \in C \mid V(a)\cap X\in \mathcal{U}\}$. Since $\mathcal{U}$ is an ultrafilter, we have 
\[
D(a) \in \mathcal{U} \iff V(a) \not \in \mathcal{U} \iff D_\mathcal{U} \not \in V(a) \iff D_\mathcal{U} \in D(a) 
\]
Thus, to show  that $D_{\mathcal{U}} \in X_{\mathbb{S}}({\mathcal{U}})$, it is enough to prove $D_{\mathcal{U}}^c \subseteq D_{\mathcal{U}}$. Suppose $a \in D_{\mathcal{U}}^c$. Since our closure operation $c$ is of finite type, there is a finitely generated congruence $D' \subseteq D_{\mathcal{U}}$ such that $a \in (D')^c$. It follows that $a \in F^c$ for any congruence $F$ containing $D'$. Therefore, if $D'$ is generated by $\{a_1,...,a_n\}$, then we have
\begin{equation}\label{inclusion}
\cap_{i=1}^n V(a_i)\cap X = V(D') \cap X \subseteq V(a) \cap X
\end{equation}
As $a_i \in D_\mathcal{U}$, we have $V(a_i) \cap X\in \mathcal{U}$. Since $\mathcal{U}$ is an ultrafilter, it follows from \eqref{inclusion} that $V(a) \cap X \in \mathcal{U}$.

\end{proof}

\section{Valuations, valuation orders, and prime congruences}

In this section, by appealing to results in \cite{tolliver2016extension} by the third author, we prove that for an idempotent semiring $A$, there is a bijection between the space of valuations on $A$ and the set of prime congruences on $A$. We then prove that indeed the space of valuations on $A$ is a spectral space, which is analogous to the fact that adic spaces are spectral.

For a totally ordered abelian group $(\Gamma,+_\Gamma)$, following the notation of \cite{tolliver2016extension}, we let $\Gamma_{\max}$ be the semifield with the underlying set $\Gamma \cup \{-\infty\}$ together with the following addition and multiplication: for $x,y \in \Gamma$,
\begin{equation}
x+y:= \max\{x,y\}, \qquad xy:= x+_\Gamma y
\end{equation}
with $ x+(-\infty) = x=(-\infty) +x$ and $x (-\infty) = (-\infty) = (-\infty) x$. For instance, when $\Gamma=(\mathbb{R},+)$, $\Gamma_{\max}$ is the tropical semifield. Now, we recall the definition of a valuation on an idempotent semiring. For the notational convenience, for $\Gamma_{\max}$, we will just write $1$ for the multiplicative identity and $0$ for the additive identity of $\Gamma_{\max}$.

\begin{mydef}\cite[Definition 2.2.]{izhakian2011glimpse}\label{definition: valuation}
Let $A$ be an idempotent semiring. By a valuation on $A$, we mean a function $\nu:A \to \Gamma_{\max}$ for some totally ordered abelian group $\Gamma$ satisfying the following properties. 
\begin{enumerate}
	\item[(a)]
$\nu(0)=0$ and $\nu(1)=1$.  
	\item[(b)] 
$\nu(xy) = \nu(x)\nu(y)$ $\forall x,y \in A$. 
\item[(c)]\label{equation: strict valuation}
$\nu(x+y)=\nu(x)+\nu(y) ~(= \max \{\nu(x),\nu(y)\})$. 
\end{enumerate}
\end{mydef}

\begin{rmk}
As it was pointed out in \cite{tolliver2016extension} (in the paragraph right after Definition 1.2), the following two conditions together are equivalent to (c):
\begin{enumerate}
	\item[(c1)] 
$\nu(x+y) \leq \nu(x) +\nu(y)$ $\forall x,y \in A$. 
\item[(c2)]
$\nu(x) \leq \nu(x+y)+\nu(y)$ $\forall x,y \in A$. 
\end{enumerate}
Strictly speaking, as there may be an element $a \neq 0 \in A$ such that $\nu(a)=0$, we should call $\nu$ a semivaluation in Definition \ref{definition: valuation}. However, we will just call $\nu$ a valuation so that our terminology is compatible with \cite{tolliver2016extension}. 
\end{rmk}

\begin{rmk}
A valuation as in the above definition means simply a homomorphism from $A$ to $\Gamma_{\max}$. In fact, a function $\nu:A \to \Gamma_{\max}$ satisfying the conditions in Definition \ref{definition: valuation} is first introduced in \cite{izhakian2011glimpse}, where the authors called it a \emph{strict valuation}. Some properties of strict valuations were studied in \cite{jun2018valuations} in connection to tropical geometry. 
\end{rmk}

For a semiring $A$, and a multiplicative subset $S$ of $A$, one can define the localization $S^{-1}A$ as in the classical case. When $A$ is multiplicatively cancellative, we let $\Frac(A):=S^{-1}A$, where $S=A-\{0\}$. In this case, the canonical map $A \to \Frac(A)$ is an injection and $\Frac(A)$ is a semifield. We further recall the following standard definition. 

\begin{mydef}
Let $A$ be an idempotent semiring. Let $\nu_1$ and $\nu_2$ be valuations on $A$. We say that $\nu_1$ and $\nu_2$ are equivalent if there exists an isomorphism $f:\nu_1(A)\to \nu_2(A)$ of semirings such that the following diagram commutes:
\begin{equation}
\begin{tikzcd}[column sep=small]
& A \arrow[dl,swap, "\nu_1"] \arrow[dr,"\nu_2"] & \\
\nu_1(A) \arrow[swap]{rr}{f} & & \nu_2(A)
\end{tikzcd}
\end{equation}
We let $\Spv A$ be the set of the equivalences of valuations on $A$. 
\end{mydef}

It was shown in \cite[Proposition 4.11]{tolliver2016extension} that there is a split surjection from the set of valuations to the set of prime congruences.  Now, we prove that if we instead work with equivalence classes of valuations, this becomes a one-to-one correspondence between $\Spv A$ and the set $\Spec_{\mathfrak{c}}A$ of prime congruences on $A$.

\begin{pro}\label{pro: Spv and Spec}
Let $A$ be an idempotent semiring. Then we have a bijection of sets:
\[
\Spv A \simeq  \Spec_\mathfrak{c} A,
\]
where $\Spv A$ is the set of equivalence classes of valuations on $A$ and $\Spec_\mathfrak{c} A$ is the set of prime congruences on $A$. 
\end{pro}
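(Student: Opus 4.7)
The plan is to realize the bijection via the kernel-congruence map $\nu \mapsto \ker(\nu) := \{(x,y) \in A \times A \mid \nu(x) = \nu(y)\}$. By the third author's earlier result \cite[Proposition 4.11]{tolliver2016extension}, this map sends valuations onto $\Spec_\mathfrak{c} A$; what must be added here is that the fiber over each prime congruence is exactly one equivalence class of valuations.

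First I would record that $\ker(\nu)$ is indeed a prime congruence (either directly or by reference to the cited proposition). It is clearly a congruence; for primeness, if $(x_1,x_2)\cdot_t(y_1,y_2) \in \ker(\nu)$, setting $a_i=\nu(x_i)$ and $b_j=\nu(y_j)$ gives
\[
\max(a_1 b_1, a_2 b_2) = \max(a_1 b_2, a_2 b_1)
\]
in the totally ordered semifield $\Gamma_{\max}$. A short case analysis on which of $a_1,a_2$ and which of $b_1,b_2$ is larger, using that multiplication in $\Gamma$ preserves the total order, forces $a_1 = a_2$ or $b_1 = b_2$; when $a_1$ or $b_1$ is the zero element of $\Gamma_{\max}$, the conclusion is immediate since zero is the unique minimum.

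Well-definedness on equivalence classes is automatic: an isomorphism $f : \nu_1(A) \to \nu_2(A)$ with $f\circ\nu_1 = \nu_2$ gives $\ker(\nu_1)=\ker(\nu_2)$ since $f$ is a bijection. For surjectivity I would invoke \cite[Proposition 4.11]{tolliver2016extension}: given a prime congruence $\mathfrak{p}$, the quotient $A/\mathfrak{p}$ is cancellative and totally ordered, so it embeds into $\Gamma_{\max}$ for $\Gamma$ the value group of its semifield of fractions, yielding a valuation $A \twoheadrightarrow A/\mathfrak{p} \hookrightarrow \Gamma_{\max}$ whose kernel is precisely $\mathfrak{p}$.

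The main content is injectivity modulo equivalence. Given $\nu_i : A \to \Gamma_{i,\max}$ with $\ker(\nu_1)=\ker(\nu_2)=\mathfrak{p}$, the first isomorphism theorem for semirings applied to each $\nu_i$ produces a factorization $A \xrightarrow{\pi} A/\mathfrak{p} \xrightarrow{\overline{\nu_i}} \nu_i(A)$ with $\overline{\nu_i}$ a semiring isomorphism onto the image. Then $f := \overline{\nu_2}\circ \overline{\nu_1}^{-1} : \nu_1(A) \to \nu_2(A)$ is a semiring isomorphism satisfying $f\circ\nu_1 = \nu_2$, so $\nu_1$ and $\nu_2$ represent the same class in $\Spv A$. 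I expect the only nonroutine step to be the case analysis verifying primeness of $\ker(\nu)$ under the twisted product; the rest consists of the first isomorphism theorem for semirings together with the construction from \cite{tolliver2016extension} supplying the inverse.
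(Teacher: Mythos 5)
Your proposal is correct and follows the same essential strategy as the paper: both build the bijection from the kernel-congruence map $\nu \mapsto \{(x,y) : \nu(x)=\nu(y)\}$, both use the fact that $A/\ker(\nu) \cong \nu(A)$ (the paper phrases this as verifying $g\circ f=\mathrm{id}$ after writing down the explicit inverse $C \mapsto [\,A \twoheadrightarrow A/C \hookrightarrow \Frac(A/C)\,]$, you phrase it as injectivity via the first isomorphism theorem), and both get surjectivity from the quotient-by-a-prime-congruence being totally ordered and cancellative. The one local deviation is your direct case analysis proving that $\ker(\nu)$ is closed under the twisted-product primeness condition; the paper instead appeals to the characterization in \cite{joo2014prime} that a congruence $C$ is prime iff $A/C$ is multiplicatively cancellative and totally ordered, which it applies to $A/\ker(\nu)\cong\nu(A)\subseteq\Gamma_{\max}$. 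Your case analysis is sound (after reducing to comparing $a_1$ with $a_2$ and $b_1$ with $b_2$, equality of $\max(a_1b_1,a_2b_2)$ and $\max(a_1b_2,a_2b_1)$ forces $a_1=a_2$ or $b_1=b_2$ using cancellativity in $\Gamma$ and handling the zero element separately), and is self-contained where the paper delegates to an external lemma — but it is a small piece of the proof and does not change the overall architecture.
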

\begin{proof}
Let $C$ be a prime congruence on $A$. It is proved in \cite{joo2014prime} that the quotient $A/C$ is multiplicatively cancellative and totally ordered. It follows that we have an injection $A/C \to \Frac(A/C)$ and $\Frac(A/C)$ is an idempotent semifield. In fact, as $A/C$ is totally ordered, $\Frac(A/C)$ is totally ordered and hence the following map
\[
\nu_C:A \longrightarrow \Frac(A/C), \quad a \mapsto \frac{[a]}{1},
\]	
where $[a]$ is the equivalence class of $a$ in $A/C$ is a valuation. 

Conversely, suppose that we have a valuation $\nu:A \to S=\Gamma_{\max}$. We claim that the following set:
\[
C_{\nu}:=\{(x,y) \in A \times A \mid \nu(x)=\nu(y)\}
\]
is a prime congruence on $A$. In fact, clearly $C_{\nu}$ is a congruence relation since $C_\nu$ is the kernel congruence of $\nu$. Furthermore, $A/C_\nu$ is isomorphic to $\nu(A)$ which is totally ordered and cancellative by being a subsemiring of $S$, which is totally ordered and cancellative. It follows again from the results in \cite{joo2014prime} that $C_\nu$ is prime. It is clear then from the definition that if $\nu_1$ and $\nu_2$ are valuations on $A$ which are equivalent, then $C_{\nu_1}=C_{\nu_2}$. Hence, we have two functions:
\[
f:\Spv A \longrightarrow \Spec_{\mathfrak{c}} A,\quad [\nu] \mapsto C_\nu,
\]
where $[\nu]$ is the equivalence class of a valuation $\nu$ in $\Spv A$, and 
\[ 
g: \Spec_{\mathfrak{c}} A \longrightarrow \Spv A, \quad C \mapsto [\nu_C].  
\]

All it remains to show is that $f$ and $g$ are inverses to each other. Let $C \in \Spec_{\mathfrak{c}} A$, then we have
\[
g(C): A \longrightarrow \Frac(A/C), \quad a \mapsto \frac{[a]}{1},
\]
where $[a]$ is the equivalence class of $a \in A$ in $A/C$. Now, $f\circ g(C)$ is the following congruence:
\[
(f\circ g)(C)=\{(x,y) \in A \times A \mid g(C)(x)=g(C)(y)\} =\{(x,y) \in A \times A \mid [x]=[y]\}=C,
\]
showing that $f\circ g$ is the identity on ${\Spec_{\mathfrak{c}} A}$. 

Finally, let $[\nu] \in \Spv A$ such that $\nu:A \to S$. Then, $f([\nu])$ is the following prime congruence:
\[
f([\nu])=\{(x,y) \in A \times A \mid \nu(x)=\nu(y)\}.
\]
Notice that $f([\nu])$ is the kernel congruence of $\nu$, we have that $A/f([\nu]) \simeq \nu(A)$. It follows that $g\circ f ([\nu])$ is a valuation defined as follows:
\[
g( f ([\nu])): A \longrightarrow \Frac(\nu(A)), \quad a \mapsto \frac{[a]}{1},
\]
where $[a]$ is the equivalence class of $a \in A$ in $\nu(A)$. But, with the injection $\nu(A) \hookrightarrow \Frac(\nu(A))$, we have that $[\nu]=[g(f ([\nu]))]$, showing that $g \circ f$ is the identity on $\Spv A$. 
\end{proof}

Now, we prove that $\Spv A$ is a spectral space. For this, we first use the notion of the space of valuation orders in \cite[\S 7]{tolliver2016extension}. Note that we do not give the original definition given in \cite{tolliver2016extension}, but rather an equivalent description proved in the same paper. 

\begin{mydef}
Let $A$ be an idempotent semiring. A preorder $\preceq$ on $A$ is said to be a \emph{valuation order}  if there exists a valuation $\nu:A \to \Gamma_{\max}$ such that
\[
x \preceq y \iff \nu(x) \leq \nu(y). 
\]
\end{mydef}

\begin{pro}\label{proposition: valuation order = space of valuations}
Let $A$ be an idempotent semiring. Then, there is a bijection of sets between $\Spv A$ and the set of valuation orders on $A$. 
\end{pro}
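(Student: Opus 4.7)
The plan is to construct the bijection directly via the canonical map $\phi:\Spv A \to \{\text{valuation orders on }A\}$ defined by sending the class $[\nu]$ of a valuation $\nu:A\to\Gamma_{\max}$ to the preorder $\preceq_\nu$ given by $x\preceq_\nu y \iff \nu(x)\leq \nu(y)$. Surjectivity of $\phi$ is immediate from the definition of a valuation order. So the entire content is to verify well-definedness and injectivity of $\phi$, and the key observation making both directions work is that for an idempotent semiring $S$, the canonical partial order is entirely determined by the additive structure, via $a\leq b \iff a+b=b$. Consequently any semiring homomorphism between idempotent semirings is automatically order-preserving.

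For well-definedness, suppose $\nu_1\sim\nu_2$ via an isomorphism $f:\nu_1(A)\to\nu_2(A)$ with $f\circ\nu_1=\nu_2$. Since $\nu_i(A)\subseteq\Gamma_{\max}$ is idempotent and $f$ is a semiring isomorphism, the preceding observation gives $\nu_1(x)\leq \nu_1(y) \iff f(\nu_1(x))\leq f(\nu_1(y)) \iff \nu_2(x)\leq \nu_2(y)$, so $\preceq_{\nu_1}=\preceq_{\nu_2}$ and $\phi$ is well-defined on equivalence classes.

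For injectivity, suppose $\nu_1:A\to \Gamma_{1,\max}$ and $\nu_2:A\to \Gamma_{2,\max}$ satisfy $\preceq_{\nu_1}=\preceq_{\nu_2}$. Taking the antisymmetric part of the preorder shows that for all $x,y\in A$, $\nu_1(x)=\nu_1(y) \iff \nu_2(x)=\nu_2(y)$, so the kernel congruences of $\nu_1$ and $\nu_2$ coincide. Define
\[
f:\nu_1(A)\longrightarrow \nu_2(A),\qquad f(\nu_1(a)):=\nu_2(a).
\]
The equivalence of kernels makes $f$ a well-defined bijection, and the fact that both $\nu_i$ are semiring homomorphisms makes $f$ a semiring isomorphism satisfying $f\circ\nu_1=\nu_2$. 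Hence $[\nu_1]=[\nu_2]$ in $\Spv A$, completing the proof of injectivity and thus the bijection.

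The main (and essentially only) obstacle is conceptual rather than technical: one has to notice that an equivalence of valuations is required \emph{only} to be a semiring isomorphism commuting with the valuations, with no a priori compatibility with the order, and then to observe that for idempotent semirings this compatibility is automatic. Once this is in hand, both directions are formal, and one could alternatively phrase the whole argument as a composition with the bijection $\Spv A\simeq \Spec_\mathfrak{c} A$ of Proposition \ref{pro: Spv and Spec} by noting that a valuation order determines and is determined by its antisymmetric part (a prime congruence) together with the induced total order on the quotient.
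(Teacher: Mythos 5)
Your proof is correct and follows essentially the same route as the paper: map a class $[\nu]$ to its induced preorder $\preceq_\nu$, observe surjectivity is immediate from the paper's stated definition of valuation order, and for injectivity recover the kernel congruence from the antisymmetric part of the preorder. The one place you diverge is in how injectivity is wrapped up: you construct the isomorphism $f:\nu_1(A)\to\nu_2(A)$ directly once you know the two kernel congruences coincide, whereas the paper routes through \cite[Proposition 7.6]{tolliver2016extension} (the quotient by the antisymmetric part is totally ordered cancellative, hence a prime congruence) and then invokes the already-proved bijection $\Spv A\simeq\Spec_\mathfrak{c}A$ of Proposition \ref{pro: Spv and Spec}. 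Your version is slightly more self-contained, avoiding both external citations; the paper's version is shorter on the page because it can lean on the machinery already set up. You also make explicit the key observation that the paper leaves as ``it is clear'' in the well-definedness step: a semiring homomorphism between idempotent semirings automatically preserves and (when invertible) reflects the canonical order, since that order is encoded in addition alone. That is worth spelling out, and your last sentence correctly notes the equivalent reformulation through $\Spec_\mathfrak{c}A$, which is precisely what the paper does.
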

\begin{proof}
Let $X$ be the set of valuations on $A$ and $Y$ be the set of valuation orders on $A$. We define the function $f: X \to Y$, sending a valuation $\nu$ to the valuation order $\preceq_\nu$ defined by $\nu$, that is, $x \preceq_\nu y$ if and only if $\nu(x) \leq \nu(y)$. It follows from \cite[Corollary 7.7.]{tolliver2016extension} that $f$ is onto. Furthermore, it is clear that if $\nu_1$ and $\nu_2$ are two equivalent valuations, then the induced valuation orders $\preceq_{\nu_1}$ and $\preceq_{\nu_2}$ are the same. Hence, $f$ induces a surjective map $\tilde{f}:\Spv A \to Y$. 

Finally, we claim that $\tilde{f}$ is injective. From \cite[Proposition 7.6.]{tolliver2016extension}, it follows that for a given valuation order $\preceq$, if $\sim$ is the relation defined by $x\sim y$ if and only if $x\preceq y \preceq x$, then $A/\sim$ is a totally ordered cancellative idempotent semiring, and the canonical order on $A/\sim$ agrees with the one induced by $\preceq$. In particular, this determines an element in $\Spec_{\mathfrak{c}} A$. Suppose that $\nu_1$ and $\nu_2$ are valuations inducing the same valuation orders. By what we just described, if $\nu_1$ and $\nu_2$ determine the same valuation order then they will determine the same prime congruence on $A$. Therefore, $\nu_1$ and $\nu_2$ should be equivalent, showing that $\tilde{f}$ is an injection. 
\end{proof}

We will be using the following proposition to show that $\Spv A$ is a spectral space. One may also find more details for the case for rings in \cite{adicspace}.

\begin{pro}\cite[Proposition 3.31]{adicspace2} \label{proposition: Hochster criterion}
Let $X'=(X_0,\tau')$ be a quasi-compact topological space. Let $\mathcal{U} \subseteq \tau'$ be a collection of clopen subsets of $X'$. Let $\tau$ be the topology on $X_0$ generated by $\mathcal{U}$. If $X=(X_0,\tau)$ is $T_0$, then $X$ is a spectral space. 
\end{pro}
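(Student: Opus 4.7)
The plan is to verify the hypotheses of the Finocchiaro ultrafilter criterion (Theorem \ref{2.2}) for the topology $\tau$, taking $\mathcal{U}$ itself as the subbasis $\mathbb{S}$. Since $\mathcal{U}$ generates $\tau$, it is manifestly a subbasis of $X$, and the $T_0$ hypothesis is given. Thus the only nontrivial content is to produce, for each ultrafilter $\mathcal{F}$ on $X_0$, a point $x \in X_0$ satisfying
\[
x \in S \iff S \in \mathcal{F} \qquad \text{for all } S \in \mathcal{U}.
\]

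To construct such an $x$, I would leverage the \emph{auxiliary} topology $\tau'$, which carries the quasi-compactness but whose role in the argument is purely to produce nonempty intersections of closed sets. Given the ultrafilter $\mathcal{F}$, define the family
\[
\mathcal{C} := \{S \in \mathcal{U} : S \in \mathcal{F}\} \cup \{X_0 \setminus S : S \in \mathcal{U}, \ S \notin \mathcal{F}\}.
\]
The key observation is that every member of $\mathcal{C}$ is \emph{closed} in $\tau'$: sets of the first kind are clopen in $\tau'$ by hypothesis on $\mathcal{U}$, and sets of the second kind are complements of clopen sets, hence again clopen in $\tau'$. Moreover, since $\mathcal{F}$ is an ultrafilter, $S \notin \mathcal{F}$ implies $X_0 \setminus S \in \mathcal{F}$, so every member of $\mathcal{C}$ belongs to $\mathcal{F}$. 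Because $\mathcal{F}$ is a filter, any finite subfamily of $\mathcal{C}$ has nonempty intersection, i.e.\ $\mathcal{C}$ has the finite intersection property.

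Now I would invoke quasi-compactness of $X'$: any collection of $\tau'$-closed sets with the finite intersection property has nonempty intersection. Hence there exists $x \in \bigcap \mathcal{C}$, and unwinding the definition of $\mathcal{C}$ gives the desired biconditional: for each $S \in \mathcal{U}$, if $S \in \mathcal{F}$ then $x \in S$, while if $S \notin \mathcal{F}$ then $x \in X_0 \setminus S$, i.e.\ $x \notin S$. Thus $X_{\mathcal{U}}(\mathcal{F}) \neq \emptyset$, and Theorem \ref{2.2} yields that $X = (X_0, \tau)$ is a spectral space.

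I do not foresee a genuine obstacle here: the argument is essentially bookkeeping once one notices that the clopenness of $\mathcal{U}$ in $\tau'$ is precisely what turns the subbasic sets (and their complements) into $\tau'$-closed sets, thereby allowing the quasi-compactness of $X'$ to do the work of producing the witness point. The only small point to be careful about is the symmetry coming from $\mathcal{F}$ being an ultrafilter (not merely a filter), which is used both to ensure $\mathcal{C} \subseteq \mathcal{F}$ and to verify the forward and backward halves of $x \in S \iff S \in \mathcal{F}$ simultaneously.
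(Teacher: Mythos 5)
The paper does not actually prove this proposition; it cites it from \cite{adicspace2} and uses it as a black box in the proof of Proposition \ref{pro: spectral1}, so there is no in-paper proof to compare against.

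That said, your argument is correct and self-contained. Taking $\mathbb{S}=\mathcal{U}$ as the subbasis in the Finocchiaro criterion (Theorem \ref{2.2}) is legitimate precisely because $\mathcal{U}$ generates $\tau$, and the $T_0$ hypothesis is given. The witness point for an arbitrary ultrafilter $\mathcal{F}$ on $X_0$ is produced correctly: the family $\mathcal{C}$ is contained in $\mathcal{F}$ (the ultrafilter property is what makes the complementary sets $X_0\setminus S$ belong to $\mathcal{F}$ when $S\notin\mathcal{F}$), hence $\mathcal{C}$ has the finite intersection property since finite intersections of filter members are filter members and $\emptyset\notin\mathcal{F}$; and every member of $\mathcal{C}$ is $\tau'$-closed because each $S\in\mathcal{U}$ is $\tau'$-clopen, so its complement is too. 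Quasi-compactness of $(X_0,\tau')$, in its finite-intersection-property form, then produces a point of $\bigcap\mathcal{C}$, and unwinding definitions shows this point lies in $X_{\mathcal{U}}(\mathcal{F})$. You have correctly isolated where each hypothesis is used: quasi-compactness of $\tau'$ for the existence of the witness, clopenness of $\mathcal{U}$ in $\tau'$ so that both halves of the ultrafilter dichotomy are captured by $\tau'$-closed sets, $T_0$ of $\tau$ for the remaining condition of the criterion. Your route has the virtue of reusing a criterion already stated in this paper, rather than redeveloping the constructible (patch) topology machinery explicitly, which is the route usually taken for statements of this type in the adic-spaces literature.
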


In \cite{tolliver2016extension}, the third author imposed the topology on the space of valuation orders which we recall now. First, we identify the space of valuation orders with a subset of $2^{A \times A}$ by identifying each valuation order $\preceq$ with the following subset of $A\times A$:
\[
S_\preceq:=\{(x,y) \in A \times A \mid x\preceq y\}.
\]

Each subset $S \subseteq A \times A$ can be considered as a function $f_S: A \times A \to \{0,1\}$, where $f_S(a)=1$ if and only if $a \in S$. In particular, we can identify $2^{A \times A}$ with the set of functions $f:A \times A \to \{0,1\}$ which in turn can be considered as follows:
\[
2^{A \times A} = \prod_{a \in A\times A}\{0,1\}^{(a)}, 
\]
that is a product of copies of $\{0,1\}$. Now, we impose the product topology on $\prod\limits_{a \in A\times A}\{0,1\}^{(a)}$ and then impose the subspace topology to the space of valuation orders, which can identified with $\Spv A$ by Proposition \ref{proposition: valuation order = space of valuations}, hence $\Spv A$ becomes a topological space. We let $\tau'$ be this topology. 

\begin{rmk}
We note that the above topology is different from the hull-kernel topology on $A \times A$. 
\end{rmk}

To apply Proposition \ref{proposition: Hochster criterion}, we impose another topology on $\Spv A$ which is analogous to the case of rings. Let $\mathcal{U}$ be the sets of the form: for $(x,y) \in A \times A$, 
\begin{equation}\label{eq: open}
D(x,y)=\{[\nu] \in \Spv A \mid \nu(x) \leq \nu(y),\nu(y) \neq 0\}.
\end{equation}
Let $\tau$ be the topology on $\Spv A$ with a basis of open subsets $\{D(x,y)\}_{(x,y) \in A \times A}$. 

\begin{pro}\label{pro: spectral1}
With the notation as above, $(\Spv A, \tau)$ is a spectral space. 
\end{pro}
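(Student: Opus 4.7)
The plan is to apply Proposition \ref{proposition: Hochster criterion} with the auxiliary topology being $\tau'$. Via Proposition \ref{proposition: valuation order = space of valuations}, I identify $\Spv A$ with the set of valuation orders, realized as a subset of $2^{A\times A} = \prod_{a \in A \times A}\{0,1\}^{(a)}$, and $\tau'$ is by construction the induced subspace topology from the product topology. Thus I need to verify three items: (i) $(\Spv A, \tau')$ is quasi-compact; (ii) each $D(x,y)$ is clopen in $\tau'$, so that the family $\mathcal{U} = \{D(x,y)\}_{(x,y) \in A \times A}$ sits inside the clopens of $\tau'$; and (iii) $(\Spv A, \tau)$ is $T_0$.

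For (i), by Tychonoff's theorem the ambient product $\{0,1\}^{A \times A}$ is compact, so it suffices to exhibit the image of $\Spv A$ as a closed subset. Using the characterization of valuation orders from \cite{tolliver2016extension}, a subset $S \subseteq A \times A$ arises from a valuation order iff it satisfies a list of axioms (reflexivity, totality, transitivity, compatibility with addition and multiplication, and so on), each of which is a universally quantified (over tuples of elements of $A$) Boolean condition on finitely many coordinates. Each such condition cuts out a clopen cylindrical set, so their simultaneous imposition is an intersection of closed sets and hence closed. This is the main technical step: to identify the right axiomatization so that the defining conditions really are finitary in the coordinates of the product.

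For (ii), since $\nu(0)$ is the minimum of $\Gamma_{\max}$, the condition $\nu(y) \neq 0$ is equivalent to $(y,0) \notin S_{\preceq_\nu}$. Therefore
\[
D(x,y) \;=\; \{S_\preceq : (x,y) \in S_\preceq\} \;\cap\; \{S_\preceq : (y,0) \notin S_\preceq\}
\]
is an intersection of two clopen cylinders in $\prod \{0,1\}^{(a)}$, hence clopen in $\tau'$. In particular, $\tau \subseteq \tau'$.

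For (iii), suppose $[\nu_1] \neq [\nu_2]$. Then their valuation orders differ at some pair, so there exists $(x,y)$ with (say) $\nu_1(x) \leq \nu_1(y)$ but $\nu_2(x) > \nu_2(y)$. If $\nu_1(y) \neq 0$, then $D(x,y)$ contains $[\nu_1]$ but not $[\nu_2]$. Otherwise $\nu_1(y) = 0$, which forces $\nu_1(x) = 0$, whereas $\nu_2(x) > \nu_2(y) \geq 0$ gives $\nu_2(x) \neq 0$; then $D(y,x)$ contains $[\nu_2]$ but not $[\nu_1]$. Combining (i)--(iii) with Proposition \ref{proposition: Hochster criterion}, we conclude that $(\Spv A, \tau)$ is a spectral space.
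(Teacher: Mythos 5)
Your overall strategy matches the paper exactly: identify $\Spv A$ with the set of valuation orders inside $2^{A\times A}$, take $\tau'$ as the subspace topology from the product, and apply Proposition \ref{proposition: Hochster criterion} after verifying (i) quasi-compactness of $(\Spv A,\tau')$, (ii) clopenness of each $D(x,y)$ in $\tau'$, and (iii) $T_0$ for $(\Spv A,\tau)$. Steps (ii) and (iii) are correct; in fact your $T_0$ argument is slightly more direct than the paper's (you explicitly produce a separating basic open $D(x,y)$ or $D(y,x)$, whereas the paper argues that topologically indistinguishable classes share a kernel and hence induce the same valuation order).

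The one place where you diverge from the paper, and where your argument is not yet complete, is step (i). The paper dispatches quasi-compactness of $(\Spv A,\tau')$ with a single citation to \cite[Proposition 7.4]{tolliver2016extension}. You instead propose to reprove it via Tychonoff plus closedness of the image of $\Spv A$ in $\{0,1\}^{A\times A}$, but you only sketch the closedness argument, and you yourself flag the sketch as incomplete: you need the \emph{intrinsic} axiomatization of valuation orders from \cite{tolliver2016extension} (the one the paper alludes to but deliberately does not reproduce) in order to know that each defining condition is a finitary Boolean condition on the coordinates. The equivalent description used in this paper --- ``there exists a valuation $\nu$ with $x\preceq y\iff\nu(x)\leq\nu(y)$'' --- involves an existential quantifier over valuations and is \emph{not} manifestly a closed condition; closedness requires replacing it by the finitary axiom list from Tolliver's paper, which you do not write down. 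So to make your version of (i) rigorous you must either spell out those axioms and check each cuts out a clopen cylinder, or simply cite \cite[Proposition 7.4]{tolliver2016extension} as the paper does; as written there is a genuine gap in (i).
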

\begin{proof}
First, by \cite[Proposition 7.4]{tolliver2016extension}, we know that $(\Spv A, \tau')$ is quasi-compact. One further notices that in $\prod\limits_{a \in A\times A}\{0,1\}^{(a)}$, for any $(x,y) \in A \times A$, the following set
\[
C(x,y):=\{Z \in \prod\limits_{a \in A\times A}\{0,1\}^{(a)} \mid Z_{(x,y)}=1 \}
\]
is clopen with the topology $\tau'$ since one only has to look at the coordinate $(x,y)$. Furthermore, we have that
\[
C(x,y) \cap \Spv A=\{[\nu] \in \Spv A \mid \nu(x) \leq \nu(y)\}. 
\]
In particular, 
\[
D(x,y)=C(x,y) \cap (\Spv A - C(y,0)), 
\]
where $D(x,y)$ is an open subset of $(\Spv A, \tau)$ as in \eqref{eq: open}. This proves that $D(x,y)$ is clopen with the topology $\tau'$. 

Next, suppose that $[\nu_1],[\nu_2] \in \Spv A$ are topologically indistinguishable. For any $x \in A$, we have that
\[
\nu_1(x) \neq 0 \iff \nu_1 \in D(x,x) \iff \nu_2 \in D(x,x) \iff \nu_2(x) \neq 0.
\]
In particular, $\nu_1$ and $\nu_2$ have the same kernel. Now, for any $x,y \in A \times A$, we have that
\[
\nu_1(x) \leq \nu_1(y) \iff x,y \in \ker(\nu_1) \textrm{ or } \nu_1 \in D(x,y)
\]
Since $\nu_1$ and $\nu_2$ have the same kernel and they are topologically indistinguishable, we further have that 
\[
x,y \in \ker(\nu_1) \textrm{ or } \nu_1 \in D(x,y) \iff x,y \in \ker(\nu_2) \textrm{ or } \nu_2 \in D(x,y) \iff \nu_2(x) \leq \nu_2(y). 
\]
Therefore, $\nu_1(x) \leq \nu_1(y)$ if and only if $\nu_2(x) \leq \nu_2(y)$ for any $x,y \in A$, showing that $[\nu_1]=[\nu_2]$ since $\nu_1$ and $\nu_2$ induce the same valuation order. 

Finally, we conclude that $(\Spv A, \tau')$ is quasi-compact, $\mathcal{U}:=\{D(x,y)\}_{(x,y) \in A \times A}$ is a collection of clopen subsets of $\Spv A$ with respect to the topology $\tau'$, and $(\Spv A, \tau)$ is $T_0$. Hence, it follows from Proposition \ref{proposition: Hochster criterion} that $(\Spv A, \tau)$ is a spectral space. 
\end{proof}

\begin{rmk}\label{remark: rem}
	Here are two final remarks. 
	\begin{enumerate}
		\item
		Our bijection in Proposition \ref{pro: Spv and Spec} is analogous to the case of commutative rings. For a commutative ring $R$, one has the following function
		\[
		\ker: \Spv R \longrightarrow \Spec R, \quad \nu \mapsto \ker (\nu). 
		\]
		The function $\ker$ is continuous and quasi-compact map, but the map $\ker$ is not a bijection in general. For instance, if $R = \mathbb{Q}$, then we have $\Spv R = \Spec \mathbb{Z}$ (as sets) whereas $\Spec \mathbb{Q} = \{0\}$. 
		\item 
		Let $A=\mathbb{T}[x_1,...,x_n]$. 
		Mincheva showed in her thesis \cite{mincheva2016semiring} that the subset of $\Spec_\mathfrak{c} A$ consisting of all geometric congruences\footnote{A geometric congruence simply means a congruence $C$ on $A$ such that $A/C \simeq \mathbb{T}$. In particular, they are prime congruences. } could provide a way to interpret the set-theoretic tropicalization in the setting of prime congruences. 
	\end{enumerate}
\end{rmk}

\section{Closure operations for semirings}\label{section: closure operations for semirings}

In this section, we explore closure operations for semirings. We investigate closure operations on the set of ideals and the set of congruences on an idempotent semiring $A$. 
If $\mathcal{I}$ denotes the poset of all ideals of an idempotent semiring $A$, then recall that (Definition \ref{clo-op}) a closure operation on $\mathcal{I}$ is a map
$cl: \mathcal{I}\longrightarrow \mathcal{I}, I\mapsto I^{cl}$ which satisfies extension, order-preservation and idempotence properties.
Some simple examples of closure operations on $\mathcal{I}$ are: 
\begin{enumerate}
	\item 
(Identity closure) $I^{cl}:= I$ for all $I \in \mathcal{I}$.
\item 
(Indiscrete closure) $I^{cl}:= A$ for all $I \in \mathcal{I}$.
\item 
(Radical closure) $I^{cl}:= \sqrt{I}= \{x\in A \mid \exists n \in \mathbb{N} \text{ such that }x^n \in I\}$ for all $I \in \mathcal{I}$.
\end{enumerate}

We now introduce some interesting closure operations on an idempotent semiring like closure at a congruence, integral closure and Frobenius closure. We also recall the closure operation from \cite[Theorem 3.7]{les2} which we call {\it $k$-closure operation}. It can be easily verified that all the closure operations that we discuss in this section are closure operations of finite type. Closure operations like integral and Frobenius are named so because they are inspired by similar operations for rings in classical commutative
algebra.
\subsection{$k$-closure}
The kernel of a congruence is just the equivalence class of the 0 element i.e., $\Ker(C) = \{ a \in A \mid (a,0) \in C\}$.
It is easy to see that the kernel of a congruence is always an ideal. It follows from \cite[Proposition 2.2 (iii)]{joo2014prime} that $(a+b, 0) \in C$ implies $(a, 0), (b,0)\in C$. Hence, $\Ker(C)$ is in fact a $k$-ideal.

We now briefly discuss $k$-closure operation. For details, see \cite[Theorem 3.7]{les2}. Let $A$ be an idempotent semiring. For an ideal $I$ of $A$, let $ C_I$ denote the corresponding congruence i.e.,
\[
C_I= \langle\{ (a,0) \in A\times A \mid a \in I\}\rangle = \{ (x,y) \in A\times A \mid \exists\  z \in I \text{ such that } x+z = y+z\}.
\]
It can be shown that $\Ker(C_I)$ is the unique smallest $k$-ideal containing $I$. In other words, $\Ker(C_I)$ is the $k$-closure of $I$ i.e., $\Ker(C_I) = \{ x\in A \mid \exists\ z\in I \text{ such that } x+z = z\}$
and 
\begin{equation}\label{Ker}
I \mapsto \Ker(C_I)
\end{equation}
defines a closure operation on the collection of all ideals of $A$ which is called the $k$-closure operation. 

The integral closure operation, which we will introduce soon, is essentially a generalization of the $k$-closure operation.
\subsection{Closure with respect to a congruence}
Let $C$ be a congruence on an idempotent semiring $A$. Let $\mathcal{I}$ be the set of all ideals of $A$. For an ideal $I\in \mathcal{I}$, define
\begin{equation}
I^{C}:=\{x\in A~ \mid ~\exists~ z\in I \text{ such that } (x,z)\in C\}.
\end{equation}
For $x\in I^{C}$ and $a\in A$, we have $(ax,az)=(a,a)(x,z)\in C$. Since $az \in I$, it implies that $ax\in I^{C}$. It follows that $I^{C}\in \mathcal{I}$. Extension and order-preservation properties are obvious and therefore we also have $I^{C}\subseteq ({I^{C}})^{C}$. Using transitivity of $C$ it can be easily verified that $(I^{C})^{C}\subseteq I^{C}$. Hence, $I \mapsto I^{C}$ defines a closure operation on $\mathcal{I}$.

A slight variant of the above operation also gives a closure operation as we explain now. Given a congruence $C$, for an ideal $I\in \mathcal{I}$, define
\begin{equation}\label{relation with int}
I^{[C]}:=\{x\in A~ \mid ~\exists~ z\in I \text{ and } y\geq x \text{ such that } (y,z)\in C\}
\end{equation}
Equivalently, we have 
\begin{equation}\label{cl at C}
I^{[C]}=\{x\in A~ \mid ~\exists~ z\in I \text{ such that } (z, x+z)\in C\}=\{x\in A~ \mid ~\exists~ z\in I \text{ such that } [x]\leq [z]\in A/C\}
\end{equation} 
We only check the idempotence condition:
\begin{align*}
(I^{[C]})^{[C]} &= \{x\in A~ \mid ~\exists~ z\in I^{[C]} \text{ such that } [x]\leq [z]\}\\
&= \{x\in A~ \mid ~\exists~ z\in A \text{ and } y\in I \text{ such that } [x]\leq [z]\leq [y]\}\\
&= \{x\in A~ \mid ~\exists~ y\in I \text{ such that } [x]\leq [y]\} = I^{[C]}
\end{align*}
It could be interesting to note that $I^{[C]}$ is a $k$-ideal even if $I$ is not.

\subsection{Integral closure}
For an ideal $I$ of an idempotent semiring $A$, define
 \begin{equation}
 I^{int}:=\{x\in A~ \mid ~x^n+ a_1x^{n-1} + \hdots + a_n = b_1x^{n-1} + \hdots + b_n \text{ for some } n \in \mathbb{N} \text{ and } a_i,b_i\in I^{i}\}
  \end{equation}
We first give an useful interpretation of the set $I^{int}$.  
\begin{lem}
	Let $I$ be an ideal of an idempotent semiring $A$. Then, 
	\begin{equation}\label{ new intprt1}
		I^{int}=\{x\in A~ \mid \exists~ z \in I \text{ such that }(x+z)^n = z(x+z)^{n-1} \text{ for some } n \in \mathbb{N}\}.
	\end{equation}
\end{lem}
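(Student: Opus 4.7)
The plan is to prove the equality by checking both inclusions, making essential use of the fact that in an idempotent semiring the ``binomial theorem'' degenerates: since $1+1=1$, one has $(x+z)^m = \sum_{k=0}^m x^{m-k}z^k$, i.e.\ every monomial appears exactly once.

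For the inclusion $(\supseteq)$, suppose $z\in I$ and $n\in \mathbb{N}$ satisfy $(x+z)^n = z(x+z)^{n-1}$. Expanding with the identity above gives
\[
x^n + x^{n-1}z + x^{n-2}z^2 + \cdots + z^n \;=\; x^{n-1}z + x^{n-2}z^2 + \cdots + z^n.
\]
Setting $a_i := b_i := z^i$ (which lies in $I^i$ since $z\in I$), this is precisely the defining relation of $I^{\mathrm{int}}$, so $x \in I^{\mathrm{int}}$.

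For the inclusion $(\subseteq)$, suppose $x^n + a_1 x^{n-1} + \cdots + a_n = b_1 x^{n-1} + \cdots + b_n$ with $a_i, b_i \in I^i$. Each $a_i$ and $b_i$ is by definition a finite sum of products of $i$ elements of $I$; let $z_1, \ldots, z_M \in I$ be the (finite) collection of all elements of $I$ appearing in any such product expression of some $a_i$ or $b_i$, and set $z := z_1 + \cdots + z_M \in I$. Then for every $i$, the expansion $z^i = (z_1 + \cdots + z_M)^i$ realizes, by idempotence, every monomial $z_{k_1}\cdots z_{k_i}$ in the $z_j$'s as a summand. In particular, each monomial occurring in $a_i$ or $b_i$ is $\le z^i$ in the canonical order, and since $c\le d$ and $e\le d$ force $c+e\le d$, we conclude $a_i \le z^i$ and $b_i \le z^i$. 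Multiplying by $x^{n-i}$ (which preserves the order) and summing gives $\sum b_i x^{n-i} \le \sum z^i x^{n-i}$, while the hypothesis yields $x^n \le \sum b_i x^{n-i}$. Combining, $x^n \le \sum_{i=1}^n z^i x^{n-i}$, i.e.\
\[
x^n + \sum_{i=1}^n z^i x^{n-i} \;=\; \sum_{i=1}^n z^i x^{n-i}.
\]
Reversing the expansion from the first paragraph, this is exactly $(x+z)^n = z(x+z)^{n-1}$, so $x$ belongs to the right hand set.

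The only genuinely delicate point is the $(\subseteq)$ direction, specifically the passage from arbitrary expressions $a_i, b_i \in I^i$ to a single element $z \in I$ whose powers dominate them simultaneously. This rests on two features of the idempotent setting: closure of the ideal $I$ under arbitrary finite sums (so $z\in I$), and the idempotent multinomial expansion which ensures every length-$i$ product of the $z_j$'s appears as a summand of $z^i$. Once this ``absorption'' step is in place, the remainder is a direct translation between the equational form defining $I^{\mathrm{int}}$ and the order-theoretic form on the right, via $a \le b \Leftrightarrow a+b=b$.
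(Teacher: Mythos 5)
Your proof is correct and follows essentially the same strategy as the paper's: both directions are handled by the binomial expansion in an idempotent semiring, and the key step in the forward direction is to collect all the generators appearing in the $a_i$'s and $b_i$'s into a single $z \in I$ and show $a_i, b_i \le z^i$, which is exactly what the paper does. The only cosmetic difference is that the paper concludes by adding $zx^{n-1}+\cdots+z^n$ to both sides of the defining equation rather than passing through the inequality $x^n \le \sum z^i x^{n-i}$ first, but these are interchangeable manipulations of the same underlying idea.
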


\begin{proof}
	Let $x\in A$ be such that $(x+z)^n = z(x+z)^{n-1}$ for some $z\in I$ and $n\in \mathbb{N}$. Equivalently, we have 
	$$	x^n + z x^{n-1} + \hdots + z^n =(x+z)^n=  z(x+z)^{n-1}= z x^{n-1} + \hdots + z^n$$ with $z^i\in I^i$ and this shows that $x\in I^{int}$.\\
	Conversely, let $x\in  I^{int}$. Then, there exists some $n \in \mathbb{N} \text{ and } a_i,b_i\in I^{i}$ such that 
	\begin{equation}\label{intdef}
		x^n+ a_1x^{n-1} + \hdots + a_n = b_1x^{n-1} + \hdots + b_n 
	\end{equation}
For each $i\in \{1,\hdots, n\}$,
	there exists $n_i,m_i\in \mathbb{N}$ such that
	\begin{align}\label{sumofprod}
		a_i=\sum_{k=1}^{n_i}\prod_{j=1}^i {a_{ij}}_{k}\qquad\qquad b_i=\sum_{k=1}^{m_i}\prod_{j=1}^i {b_{ij}}_{k}
	\end{align}
	for some ${a_{ij}}_{k}, {b_{ij}}_{k}\in I$. Let
	\begin{equation}\label{2sum}
		\sigma_{a_i}:= \sum {a_{ij}}_{k}\qquad\qquad  	\sigma_{b_i}:= \sum {b_{ij}}_{k}
	\end{equation}
 where the summations respectively run over all the ${a_{ij}}_{k}$ and ${b_{ij}}_{k}$ appearing in \eqref{sumofprod}.
	Let
	\begin{equation}\label{1sum}
		z:=\sum_{i=1}^n\sigma_{a_i} + \sigma_{b_i}
	\end{equation}
	Clearly, we have $z\in I$.  It follows from \eqref{2sum} and \eqref{1sum} that 
	\begin{equation}
		{a_{ij}}_{k}\leq \sigma_{a_i}\leq z\qquad\qquad 	{b_{ij}}_{k}\leq \sigma_{b_i}\leq z
	\end{equation}
Therefore, by \eqref{sumofprod} we have $a_i\leq z^i$ and $b_i\leq z^i$ for all $i\in\{1,\hdots, n\}$. In other words, $a_i+z^i=z^i$ and $b_i+z^i=z^i$ for all $i\in\{1,\hdots, n\}$. 
	Thus, adding $z x^{n-1} + \hdots + z^n$ to both sides of the equation \eqref{intdef}, we obtain 
	\begin{equation}\label{ new intprt}
		(x+z)^n=x^n + z x^{n-1} + \hdots + z^n = z x^{n-1} + \hdots + z^n  = z(x+z)^{n-1}
	\end{equation}
	This completes the proof.
\end{proof}

To show that $I^{int}$ is indeed an ideal of $A$, we will use a binomial expansion trick for idempotent semiring which we prove next. 

\begin{lem}\label{binom trick}
Let $A$ be an idempotent semiring. For any $a, b\in A$ and $m, n\in \mathbb{N}$, we have 
\begin{itemize}
\item[(1)] $(a + b)^{m + n} = a^m(a + b)^n + b^n(a + b)^m$
\item[(2)] If $A$ is also cancellative, we have $(a+b)^n = a^n + b^n$.
\end{itemize}
\end{lem}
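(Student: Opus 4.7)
The plan is to reduce everything to an ``idempotent binomial formula'' $(a+b)^N = \sum_{k=0}^N a^k b^{N-k}$, which holds in any commutative idempotent semiring. This follows from the usual binomial expansion $(a+b)^N = \sum_{k=0}^N \binom{N}{k} a^k b^{N-k}$ together with the observation that in an idempotent semiring every positive-integer multiple of an element $c$ equals $c$ itself (since $c+c=c$ and induction), so the coefficients $\binom{N}{k}$ collapse to $1$ whenever they are positive, i.e.\ for all $0 \le k \le N$.

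For (1), I would expand both sides via this formula. The left-hand side becomes $\sum_{k=0}^{m+n} a^k b^{m+n-k}$. For the right-hand side, the substitution $k = m+j$ rewrites $a^m(a+b)^n = \sum_{j=0}^n a^{m+j} b^{n-j}$ as $\sum_{k=m}^{m+n} a^k b^{m+n-k}$, while $b^n(a+b)^m = \sum_{k=0}^m a^k b^{m+n-k}$. The two index ranges $\{0,\ldots,m\}$ and $\{m,\ldots,m+n\}$ together cover $\{0,\ldots,m+n\}$ with a single overlap at $k=m$; idempotence absorbs the duplicate $a^m b^n$ term, yielding exactly the left-hand side.

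For (2), I would specialize (1) by taking $m=n$ to obtain
\[
(a+b)^{2n} = a^n(a+b)^n + b^n(a+b)^n = (a^n + b^n)(a+b)^n.
\]
On the other hand, $(a+b)^{2n} = (a+b)^n \cdot (a+b)^n$. Comparing, we have $(a+b)^n \cdot (a+b)^n = (a^n+b^n) \cdot (a+b)^n$; if $(a+b)^n \neq 0$, multiplicative cancellativity immediately gives $(a+b)^n = a^n + b^n$.

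The one place requiring care, and the main obstacle, is the degenerate case $(a+b)^n = 0$: here one must show both sides vanish. Cancellativity forbids nonzero zero divisors, so a straightforward induction on $n$ forces $a+b = 0$; then in an idempotent semiring the identity $a = a + (a+b) = (a+a) + b = a+b = 0$ (and symmetrically $b=0$) shows $a=b=0$, whence $a^n + b^n = 0$ as well. This completes the argument in all cases.
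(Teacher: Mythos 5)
Your proof is correct and follows essentially the same path as the paper: for (1) you expand via the idempotent binomial formula, reindex the two halves to match $a^m(a+b)^n$ and $b^n(a+b)^m$, and let idempotence absorb the overlap at $k=m$; for (2) you set $m=n$ in (1) to obtain $(a+b)^{2n}=(a^n+b^n)(a+b)^n$ and cancel. The only difference is that you spell out the degenerate case $(a+b)^n=0$ (cancellativity forbids nonzero zero divisors, and idempotence gives strictness, forcing $a=b=0$), which the paper compresses into the parenthetical remark that $A$ is ``idempotent (and hence strict) and cancellative''; your more explicit treatment is appropriate since the paper's cancellation axiom only permits cancelling nonzero factors.
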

\begin{proof}
We have that
\[
(a+b)^{m+n} = \sum_{k=0}^{m+n} a^kb^{m+n-k} = \sum_{k=0}^m a^kb^{m+n-k} + \sum_{k=m+1}^{m+n}a^kb^{m+n-k}
\]
\[
=\sum_{k=0}^m a^kb^{m+n-k} + a^mb^n +  \sum_{k=m+1}^{m+n}a^kb^{m+n-k} = \sum_{k=0}^m a^kb^{m+n-k} + \sum_{k=m}^{m+n}a^kb^{m+n-k}
\]
\[
=b^n\sum_{k=0}^ma^kb^{m-k} + \sum_{j=0}^{n} a^{m+j}b^{n-j} = b^n\sum_{k=0}^ma^kb^{m-k} + a^m\sum_{j=0}^{n} a^{j}b^{n-j}=b^n(a+b)^m + a^m(a+b)^n. 
\]
This proves (1).

Now, assume $A$ is cancellative. Putting $m=n$ in (1), we obtain
\[
(a + b)^{n + n} = a^n(a + b)^n + b^n(a + b)^n = (a^n +b^n)(a+b)^n.
\]
Since $A$ is idempotent (and hence strict) and cancellative, this gives us $(a+b)^n = a^n + b^n$.
\end{proof}
\begin{lem}
Let $A$ be an idempotent semiring. For any ideal $I$ of $A$, 
\begin{equation}\label{int}
I^{int}:=\{x\in A~ \mid \exists~ z \in I \text{ such that }(x+z)^n = z(x+z)^{n-1} \text{ for some } n \in \mathbb{N}\}
\end{equation}
is an ideal of $A$.
\end{lem}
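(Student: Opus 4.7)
The plan is to verify the three ideal axioms (containing $0$, closed under addition, absorbing under multiplication by $A$), using the reformulation \eqref{ new intprt1} of $I^{int}$ as the set of $x\in A$ admitting some $z\in I$ with $(x+z)^n = z(x+z)^{n-1}$ for some $n\in\mathbb{N}$. The element $0$ lies in $I^{int}$ trivially with $z=0$, so the real content is closure under addition and multiplicative absorption.

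For multiplicative absorption, given $x\in I^{int}$ with witness $(z,n)$ and any $a\in A$, I would propose the witness $(az,n)$ for $ax$. Indeed $az\in I$ because $I$ is an ideal, and multiplying $(x+z)^n=z(x+z)^{n-1}$ by $a^n$ gives
\[
(ax+az)^n = a^n(x+z)^n = a^n z(x+z)^{n-1}=az\,(ax+az)^{n-1},
\]
so $ax\in I^{int}$.

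The main obstacle is closure under addition, and here I would exploit Lemma \ref{binom trick}(1) together with the canonical partial order on $A$. Suppose $x_1,x_2\in I^{int}$ with witnesses $(z_i,n_i)$, set $y_i=x_i+z_i$ and $z=z_1+z_2\in I$, so $(x_1+x_2)+z=y_1+y_2$. I would take $m=n_1+n_2$ and apply Lemma \ref{binom trick}(1) to write
\[
(y_1+y_2)^{n_1+n_2}=y_1^{n_1}(y_1+y_2)^{n_2}+y_2^{n_2}(y_1+y_2)^{n_1}=z_1 y_1^{n_1-1}(y_1+y_2)^{n_2}+z_2 y_2^{n_2-1}(y_1+y_2)^{n_1}.
\]
Since $y_i\le y_1+y_2$ for $i=1,2$, the right-hand side is bounded above (in the canonical order) by $(z_1+z_2)(y_1+y_2)^{n_1+n_2-1}$. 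Conversely, because $z_i\le y_i$, we have $z_1+z_2\le y_1+y_2$, and multiplying by $(y_1+y_2)^{n_1+n_2-1}$ yields the reverse inequality $(z_1+z_2)(y_1+y_2)^{n_1+n_2-1}\le(y_1+y_2)^{n_1+n_2}$. Combining the two inequalities gives equality in $A$, which is exactly the required witness that $x_1+x_2\in I^{int}$.

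Putting these three facts together shows $I^{int}$ is an ideal. The only subtle part of the argument is pinning down the right exponent $m=n_1+n_2$ and using the binomial identity together with the canonical order to produce both $\le$ and $\ge$; I do not expect any further difficulty beyond this.
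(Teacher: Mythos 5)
Your proof is correct and takes essentially the same approach as the paper: closure under addition is established via Lemma \ref{binom trick}(1) together with the canonical partial order, and the final witness for $x_1+x_2$ is $(z_1+z_2,\,n_1+n_2)$ in both arguments. The only difference is organizational — you apply the binomial identity directly to $y_1=x_1+z_1$ and $y_2=x_2+z_2$ and sandwich $(y_1+y_2)^{n_1+n_2}$ between two inequalities, whereas the paper first bounds $(x_1+x_2)^{n_1+n_2-1}$ by $(z_1+z_2)s^{n_1+n_2-2}$ and then uses a second application of the binomial trick to pass to $s^{n_1+n_2}$; your version is a touch more direct but the underlying computation is the same.
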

\begin{proof}
Let $x, y \in I^{int}$. Then, there exists $z, w \in I$ and $n, m \in \mathbb{N}$ such that  $(x+z)^n = z(x+z)^{n-1}$ and $(y+w)^m = w(y+w)^{m-1}$. This implies $x^n \leq z(x+z)^{n-1}$ and $y^m \leq w(y+w)^{m-1}$.  Let $s = x + y + z + w$.  Then, using the binomial trick (Lemma \ref{binom trick}(1)) we have  
\begin{align}\label{sum1}
(x+y)^{m+n-1} &= x^n(x+y)^{m-1} + y^m(x+y)^{n-1} \leq x^ns^{m-1} + y^ms^{n-1}\nonumber\\
&\leq z(x+z)^{n-1}s^{m-1} + w (y+w)^{m-1}s^{n-1}\nonumber\\
&\leq (z+w)s^{m+n-2} = (z+w)(x+y+z+w)^{m+n-2}
\end{align} 
Multiplying $s$ on either sides of \eqref{sum1}, we have $$(x+y)^{m+n-1}(x+y+z+w)\leq(z+w)(x+y+z+w)^{m+n-1}$$
In other words, we have 
\begin{equation}\label{sum2}
(z+w)(x+y+z+w)^{m+n-1} +(x+y)^{m+n-1}(x+y+z+w) = (z+w)(x+y+z+w)^{m+n-1}
 \end{equation}
Using the binomial trick (Lemma \ref{binom trick}(1)) again, we obtain from \eqref{sum2}
$$(x+y+z+w)^{m+n}=(z+w)(x+y+z+w)^{m+n-1}$$
Since $z+w\in I$, this implies $x+y \in I^{int}$. Also, for $x\in I^{int}$ it can be easily seen that $ax\in I^{int}$ for any $a\in A$. It follows that $I^{int}$ is an ideal of $A$. 
\end{proof}

Let $I'$ denote the $k$-closure of an ideal $I\in \mathcal{I}$. In other words, $I'$ is the intersection of all $k$-ideals containing $I$. 
\begin{lem}\label{int-sat}
Let $A$ be an idempotent semiring. For any ideal $I$ of $A$, we have $I^{int}= (I')^{int}$.
\end{lem}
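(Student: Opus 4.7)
The inclusion $I^{int} \subseteq (I')^{int}$ is immediate once one observes that the operation $I \mapsto I^{int}$ is order-preserving, which follows directly from the definition in \eqref{int}: any witness $z \in I$ for $x \in I^{int}$ also lies in $I'$. The real content is the reverse inclusion $(I')^{int} \subseteq I^{int}$.

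My plan is to use the explicit description of $I'$ in the idempotent case. Since $A$ is additively idempotent, Proposition \ref{saturation} together with the compatibility of the canonical order with multiplication gives $I' = \{u \in A \mid \exists\, w \in I \text{ with } u \leq w\}$; indeed, if $u + y \in I$ with $y \in I$, take $w = u + y$, so $u \leq w \in I$. Now suppose $x \in (I')^{int}$, so there exist $z \in I'$ and $n \in \mathbb{N}$ with $(x+z)^n = z(x+z)^{n-1}$, by \eqref{ new intprt1}. Choose $w \in I$ with $z \leq w$. I claim $w$ serves as a witness that $x \in I^{int}$, namely $(x+w)^n = w(x+w)^{n-1}$.

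The key computation is a binomial-style expansion valid in any additively idempotent commutative semiring (all multinomial coefficients become $1$ in $\mathbb{B}$): writing $x + w = (x+z) + w$ (using $z \leq w$, so $x + w = x + z + w$), one gets
\begin{equation*}
(x+w)^n \;=\; \sum_{k=0}^{n} (x+z)^k\, w^{n-k} \;=\; (x+z)^n \;+\; w\sum_{k=0}^{n-1} (x+z)^k\, w^{n-1-k} \;=\; (x+z)^n + w(x+w)^{n-1}.
\end{equation*}
Substituting $(x+z)^n = z(x+z)^{n-1}$ and using $z \leq w$ and $x+z \leq x+w$ (so $z(x+z)^{n-1} \leq w(x+w)^{n-1}$, meaning the first summand is absorbed by the second), we obtain $(x+w)^n = w(x+w)^{n-1}$. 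Since $w \in I$, this shows $x \in I^{int}$ by \eqref{ new intprt1}, completing the proof.

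The only mildly subtle point is the absorption step at the end, and that just uses the basic fact that in an idempotent semiring $a \leq b$ implies $a + b = b$, together with the order-preservation of multiplication. No finiteness or cancellation hypotheses on $I$ or $A$ are needed.
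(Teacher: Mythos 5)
Your proof is correct and follows essentially the same route as the paper's: both arguments take a witness $z\in I'$ for $x\in(I')^{int}$, bound it above by some $w\in I$ (valid by the idempotent description of the $k$-closure), and then run a binomial-plus-absorption computation in the idempotent semiring to produce the required integral equation with witness $w$. The only cosmetic difference is which form of the idempotent binomial identity is invoked — you expand $((x+z)+w)^n$ directly, whereas the paper applies Lemma \ref{binom trick}(1) to $(x+z)^n$ — but the underlying mechanism is identical.
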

\begin{proof}
Let $x \in (I')^{int}$. Then, by definition, there exists $y\in I'$ with $(x+y)^n = y(x+y)^{n-1}$. Since $y\in I'$, there exists some $z\in I$ such that $y\leq z $. Therefore, we have
\begin{equation}\label{sat-int}
x^n\leq (x+y)^n = y(x+y)^{n-1}\leq z(x+z)^{n-1} 
\end{equation}
 Using the binomial trick (Lemma \ref{binom trick}(1)), it follows from \eqref{sat-int}  $$z(x+z)^{n-1} = z(x+z)^{n-1}+x^n = z(x+z)^{n-1}+ x^{n-1}(x+z) = (x+z)^n$$
Thus, $x \in I^{int}$. The other inclusion $I^{int} \subseteq (I')^{int}$ is obvious.
\end{proof}
\begin{pro}\label{pro: integral closure}
Let $A$ be an idempotent semiring. Let $ \mathcal{I}$ denote the collection of all ideals of $A$. For any ideal $I\in \mathcal{I}$,
\begin{equation}
I \mapsto (I^{int})'
\end{equation}
defines a closure operation on $ \mathcal{I}$ which we call the integral closure operation.
\end{pro}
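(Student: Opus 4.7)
My plan is to verify the three axioms of a closure operation: extension, order-preservation, and idempotence. Extension is immediate: for any $x \in I$, taking $n=1$ and $z=x$ in the equivalent form \eqref{ new intprt1} gives $(x+z)^1 = x = z(x+z)^0$, so $I \subseteq I^{int} \subseteq (I^{int})'$. Order-preservation is equally routine: if $I_1 \subseteq I_2$, any witness $z \in I_1$ in the definition of $I_1^{int}$ still lies in $I_2$, so $I_1^{int} \subseteq I_2^{int}$, and taking $k$-closures preserves the inclusion.

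The real content is idempotence, namely $(((I^{int})')^{int})' = (I^{int})'$. Invoking Lemma \ref{int-sat} (which says $J^{int} = (J')^{int}$ for any ideal $J$), I can replace $((I^{int})')^{int}$ by $(I^{int})^{int}$, reducing the goal to $((I^{int})^{int})' = (I^{int})'$. The inclusion $\supseteq$ is formal from extension of integral closure applied to $I^{int}$ combined with monotonicity of the $k$-closure. For the other inclusion, since $(I^{int})'$ is a $k$-ideal, it suffices to show $(I^{int})^{int} \subseteq (I^{int})'$; because in an idempotent semiring the $k$-closure of a set consists of the elements dominated by something in the set, this in turn reduces to showing that every $x \in (I^{int})^{int}$ satisfies $x \leq w$ for some $w \in I^{int}$.

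The key computation is a direct witness construction. Given $x \in (I^{int})^{int}$, unpacking the definitions produces $y \in I^{int}$ with $(x+y)^n = y(x+y)^{n-1}$ together with $z \in I$ satisfying $(y+z)^m = z(y+z)^{m-1}$. I will show that $w := x+y$ lies in $I^{int}$, using the same $z$ and exponent $k = n+m-1$; since $x \leq x+y$, this yields $x \in (I^{int})'$. Setting $s = x+y+z$, the plan is to prove $s^{n+m-1} = z s^{n+m-2}$ in three steps: first, iterating $(x+y)^n = y(x+y)^{n-1}$ gives $(x+y)^{n+m-1} = y^m(x+y)^{n-1}$; second, from $y^m \leq (y+z)^m = z(y+z)^{m-1} \leq zs^{m-1}$ one deduces $(x+y)^{n+m-1} \leq zs^{n+m-2}$; third, expanding $s^{n+m-1} = (x+y)s^{n+m-2} + zs^{n+m-2}$ and matching each summand of $(x+y)s^{n+m-2}$ of $(x+y)$-degree less than $n+m-1$ with an identical summand of $zs^{n+m-2}$ (while the top term $(x+y)^{n+m-1}$ is absorbed by the previous bound), one gets $s^{n+m-1} \leq zs^{n+m-2}$, which together with the obvious reverse inequality forces equality.

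The main obstacle is precisely this idempotence step. The standard ring-theoretic argument that integral closure is idempotent relies on finite generation of $R[a_1,\dots,a_n,x]$ as an $R$-module together with a Cayley-Hamilton determinant trick, neither of which has a literal semiring analogue (semimodules lack additive inverses). The binomial-style manipulation above is the substitute; it crucially exploits both idempotence (so that $a \leq b$ allows one to collapse $a+b$ to $b$, and so that a sum of terms each dominated by $b$ is itself dominated by $b$) and the rewritten form \eqref{ new intprt1} of the integrality relation, which packages the defining equation as an honest inequality in the canonical order.
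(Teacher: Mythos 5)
Your proof is correct and follows essentially the same strategy as the paper's: reduce to showing $(I^{int})^{int} \subseteq (I^{int})'$ by exhibiting, for each $x \in (I^{int})^{int}$ with intermediate witness in $I^{int}$, a power equation over an element of $I$ for the sum of $x$ and that witness, then apply Lemma \ref{int-sat}. The only notable difference is that your degree count is slightly sharper (exponent $n+m-1$ rather than the paper's $nm$) and you reorder the use of Lemma \ref{int-sat}, but the underlying binomial-absorption argument is the same.
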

\begin{proof}
Clearly $I \subseteq I^{int} \subseteq (I^{int})'$. Also, for any $I, J\in \mathcal{I}$ with $I \subseteq J$, it can be easily verified that $I^{int} \subseteq J^{int}$. Consequently, we also have  $(I^{int})' \subseteq (J^{int})'$. Let us now verify the idempotence condition. Let $x\in ({I^{int}})^{int}$. Then, there exists $z \in I^{int}$ such that $(x+z)^n = z(x+z)^{n-1}$ for some $n \in \mathbb{N}$. Also, for $z$ in $I^{int}$ there exists $w$ in $I$ such that $(z+w)^m= w(z+w)^{m-1}$ for some $m \in \mathbb{N}$. Now,
\begin{align*}
(x+z+w)^{nm} &= w(x+z+w)^{nm-1} + (x+z)^{nm-1}(x+z+w)\qquad \text{ (using (Lemma \ref{binom trick}(1)))}\\
             & = w(x+z+w)^{nm-1} + (x+z)^{nm} \qquad\qquad \text{( rest gets absorbed in the first term)}\\
             & = w(x+z+w)^{nm-1} + z^m(x+z)^{nm-m}\\
             & \leq  w(x+z+w)^{nm-1} + (z+w)^m(x+z)^{nm-m}\\
             & = w(x+z+w)^{nm-1} + w(z+w)^{m-1}(x+z)^{nm-m}\\
             & = w(x+z+w)^{nm-1} \qquad\qquad\qquad\qquad\qquad \text{(rest gets absorbed in the first term)}
\end{align*}
In other words, we have $w(x+z+w)^{nm-1} = w(x+z+w)^{nm-1}+(x+z+w)^{nm} = (x+z+w)^{nm}$. This shows that $x+z \in I^{int}$. Since $z$ is also in $I^{int}$, this implies $x\in (I^{int})'$. Hence, $(I^{int})^{int}\subseteq (I^{int})'$. By Lemma \ref{int-sat}, we also have 
$(I^{int})' \subseteq ({(I^{int})}')^{int}= (I^{int})^{int}$. Thus,$(I^{int})^{int}= (I^{int})'$. Again applying Lemma \ref{int-sat}, we obtain
$$((I^{int})')^{int} = (I^{int})^{int}= (I^{int})'$$
Therefore, we have $(((I^{int})')^{int})' = (I^{int})'$. This completes the proof.
\end{proof}

It can be easily seen from \eqref{Ker} and \eqref{int} that for an idempotent cancellative semiring $A$, integral closure is same as $k$-closure. The following is an example in this context. 

\begin{myeg}
Consider the semiring $\mathbb{T}[x]$ of polynomials with coefficients in the tropical semifield $\mathbb{T}$. One can impose a congruence relation on $\mathbb{T}[x]$ as follows:
\[
f(x) \sim g(x) \iff f(\alpha)=g(\alpha)~ \forall \alpha \in \mathbb{T}. 
\]
Let $A:=\mathbb{T}[x]/\sim$. One may observe that $A$ is multiplicatively cancellative and strict (see \cite{jun2018valuations}). Hence for any ideal $I \subseteq A$, we have that
\[
I^{int}=\{x \in A \mid x+z=z, ~\textrm{ for some } z \in I\}. 
\]
In other words, in this case, $I^{int}$ is just the $k$-closure of $I$. For example, the ideal $I=\angles{\bar{x}}$ generated by $\bar{x}$ is a $k$-ideal. In fact, suppose that for $\overline{f(x)} \in A$, and 
\[
\overline{h(x)} \leq \overline{f(x)}=\overline{g(x)}\cdot \overline{x}. 
\]
That is 
\begin{equation}\label{eq: saturation}
\overline{h(x)+f(x)} = \overline{h(x)}+\overline{f(x)} = \overline{f(x)}.
\end{equation}
But, from \cite[Lemma 4.5]{jun2018valuations}, any $\overline{g(x)}$ has the factor $\overline{x}$ if and only if any representative of $\overline{g(x)}$ has no constant term. Since $\overline{f(x)} \in \angles{\bar{x}}$, we have that $h(x)+f(x)$ and $f(x)$ have no constant term, and in particular, $h(x)$ has no constant term. This shows that $\overline{h(x)}$ has the factor $\overline{x}$, and hence $\overline{h(x)} \in \angles{\bar{x}}$, showing that $\angles{\bar{x}}$ is a $k$-ideal. Hence $I^{int}=\angles{\bar{x}}$. 
\end{myeg}

For any semiring $A$, note that $\alpha\cdot_t\beta = \beta\cdot_t\alpha$ for any $\alpha, \beta \in A\times A$ where $\cdot_t$ denotes the twisted product. Using this it can be easily shown that in an idempotent semiring $A$, we have
\[
( \alpha + \beta)^n = \sum_{i=0}^n\alpha^i\beta^{n-i},
\]
where product means twisted product. We now point out an interesting connection between the integral closure operation and the closure operation with respect to a congruence. In what follows, all products are twisted product unless otherwise stated. 

Let $C=\cap P$ denote the congruence which is the intersection of all prime congruences $P$ of an idempotent semiring $A$. Consider the closure operation \eqref{cl at C} defined by $$I \mapsto I^{[C]}= \{x\in A~ \mid ~\exists~ z\in I \text{ such that } (z, x+z)\in C\}.$$
If $x$ is in $I^{[C]}$ , we have $(z, x+z) \in \cap P$ for some $z$ in $I$. By \cite[Theorem 3.9]{joo2014prime}, there exists $n,l \in \mathbb{N}$ and $c \in A$ such that, $((x+z)^n+c,0)(z,x+z)^l\in \textrm{Diag}(A)$. We will now expand the term $(z,x+z)^l$.
For this, first note that for any $b \in A$, we have $(0,b)^r = (0, b^r)$ when $r$ is odd and $(0,b)^r = (b^r, 0)$ when $r$ is even. Also, for any $a \in A$, we have $(a,a)^n = (a^n,a^n)$ for any $n \in \mathbb{N}$ and $(a,a)(b,0)= (ab,ab)=(a,a)(0,b)$.
Using this, we obtain
\begin{align*}
(z,x+z)^l = ((z,z)+(0,x))^l &= \sum_{i=0}^l(z,z)^i(0,x)^{l-i}\\
&=\begin{cases}
      \sum_{i=1}^l(z^ix^{l-i},z^ix^{l-i}) + (0,x^l) & \text{when $l$ is odd}\\
      \sum_{i=1}^l(z^ix^{l-i},z^ix^{l-i}) + (x^l,0) & \text{when $l$ is even}
    \end{cases}   \\
&=\begin{cases}
      (z(x+z)^{l-1},(x+z)^l) & \text{when $l$ is odd}\\
     ((x+z)^l, z(x+z)^{l-1}) & \text{when $l$ is even}
    \end{cases}    
\end{align*}
Therefore, $x\in I^{[C]}$ implies that there exists $n,l \in \mathbb{N}$ such that either
\begin{align}\label{exp}
&((x+z)^n+c,0)(z,x+z)^l
=((x+z)^n+c,0)(z(x+z)^{l-1},(x+z)^l)\nonumber\\ &= (z(x+z)^{n+l-1},(x+z)^{n+l})
+ c(z(x+z)^{l-1},(x+z)^l)) \in \textrm{Diag}(A)
\end{align}
or 
\begin{align}\label{exp1}
&((x+z)^n+c,0)(z,x+z)^l
=((x+z)^n+c,0)((x+z)^l,z(x+z)^{l-1})\nonumber\\ &= ((x+z)^{n+l},z(x+z)^{n+l-1})
+ c((x+z)^l,z(x+z)^{l-1})) \in \textrm{Diag}(A)
\end{align}
Clearly, it follows from \eqref{ new intprt1} that any $x \in I^{int}$ satisfies a condition of the form \eqref{exp} or \eqref{exp1}. Therefore, $I^{int} \subseteq I^{[\cap P]}$ for any ideal $I$ of $A$.
\begin{rmk}
The observation $I^{int} \subseteq I^{[\cap P]}$ determines an idempotent semiring analogue (in fact, a stronger version) of \cite[Proposition 6.8.10]{HS}. To see this, observe that it follows from \eqref{cl at C} and Proposition \ref{pro: Spv and Spec} that
\begin{align*}
I^{[\cap P]} &= \{x\in A~ \mid ~\exists~ z\in I \text{ such that } [x]\leq [z]\in A/[\cap P]\}\\
&=\{x\in A~ \mid ~\exists~ z\in I \text{ such that } v(x)\leq v(z) \text{ for all valuations $v$ on $A$}\}
\end{align*}
This also shows how the elements in $I^{int}$ are related to the valuations on $A$.
\end{rmk}
\begin{rmk}
We have $I^{int} \subseteq I^{[\cap P]}$ for any ideal $I$ of an idempotent semiring $A$. If we further assume that $A$ is also idealic i.e., $a \leq 1$ for all $a \in A$,  it follows from \eqref{exp} that
\begin{align*}
&(z(x+z)^{n+l-1},(x+z)^{n+l})+ c(z(x+z)^{l-1},(x+z)^l)) + ((x+z)^{l-1}, (x+z)^{l-1}) \\ 
&= (z(x+z)^{n+l-1},(x+z)^{n+l})+((cz+1)(x+z)^{l-1},(1+x+z)(x+z)^{l-1}))\\
&= (z(x+z)^{n+l-1},(x+z)^{n+l})+((x+z)^{l-1},(x+z)^{l-1}))
\in \textrm{Diag}(A)
\end{align*}
In other words, the second summand of \eqref{exp} can be appropriately modified to make it belong in $Diag(A)$. Similarly, \eqref{exp1} can also be modified. Of course, this does not give us $z(x+z)^{n+l-1}=(x+z)^{n+l}$. But, in spirit, it is similar to \eqref{ new intprt1} and therefore to \eqref{ new intprt}. Considering the similarity of \eqref{ new intprt} to the classical ring theoretic notion of integral closure, $I \mapsto I^{[\cap P]}$ could also be a possible candidate for the notion of integral closure on idempotent idealic semirings.
\end{rmk}

\subsection{Frobenius closure}\label{subsection: frobenius closure}

Recall from Lemma \ref{binom trick} (2) that if $A$ is an idempotent cancellative semiring, then $f_n: A\longrightarrow A,  a \mapsto a^n$ defines an endomorphism of $A$ for any $n \in \mathbb{N}$. 

\begin{pro}
Let $A$ be an idempotent cancellative semiring. For any ideal $I$ of $A$ and $n\in \mathbb{N}$, let $I^{[n]}:=\{x\in A~|~x=\sum_{i=1}^s r_ia_i^n\text{~for some~} a_i\in I, r_i\in A \text{~and~}s\in \mathbb{N}\}$. Let $ \mathcal{I}$ denote the collection of all ideals of $A$. For any ideal $I\in \mathcal{I}$, 
\begin{equation}
I \mapsto I^{Frob} := \bigcup_{n \in \mathbb{N}}f_n^{-1}((f_n(I)A)) = \{x \in A~ \mid ~\exists~n \in \mathbb{N} \text{ such that } x^n \in I^{[n]}\}
\end{equation}
defines a closure operation on $\mathcal{I}$ which we call the Frobenius closure operation.\footnote{We call this the ``Frobenius closure" because it is motivated by the usual Frobenius closure for rings of characteristic $p > 0$. If $R$ is a ring of characteristic $p>0$, the association $a \mapsto a^{p^e}$ ($e \in \mathbb{N}$) defines a ring endomorphism. Clearly, this does not hold in general for arbitrary $n \in \mathbb{N}$. However, in an idempotent cancellative semiring, $a \mapsto a^n$ defines an endomorphism of $A$ for any $n$ and so we can consider Frobenius closure for any $n$.}
\end{pro}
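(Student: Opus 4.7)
The plan is to check the three defining properties of a closure operation (extension, idempotence, order-preservation), after first verifying that $I^{Frob}$ is indeed an ideal of $A$. The key algebraic tool throughout is the ``freshman's dream'' identity $(a+b)^n = a^n + b^n$, which holds in any idempotent cancellative semiring by Lemma~\ref{binom trick}(2); this is what makes $f_n\colon a \mapsto a^n$ an endomorphism of $A$ for every $n \in \mathbb{N}$. Before anything else I would note that the two descriptions of $I^{Frob}$ agree: $I^{[n]}$ is precisely the ideal generated by $\{a^n : a \in I\} = f_n(I)$, that is $I^{[n]} = f_n(I)A$, so $f_n^{-1}(f_n(I)A) = \{x \in A : x^n \in I^{[n]}\}$.

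Next I would show that $I^{Frob}$ is an ideal. Closure under multiplication by elements of $A$ is immediate from $I^{[n]}$ being an ideal. For closure under addition, suppose $x^m \in I^{[m]}$ and $y^n \in I^{[n]}$, say $x^m = \sum_i r_i a_i^m$ and $y^n = \sum_j s_j b_j^n$ with $a_i, b_j \in I$. Applying the identity $(u+v)^k = u^k + v^k$ repeatedly, we obtain
\[
x^{mn} = (x^m)^n = \Big(\sum_i r_i a_i^m\Big)^n = \sum_i r_i^n a_i^{mn} \in I^{[mn]},
\]
and similarly $y^{mn} \in I^{[mn]}$. Hence $(x+y)^{mn} = x^{mn} + y^{mn} \in I^{[mn]}$, so $x+y \in I^{Frob}$.

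Extension $I \subseteq I^{Frob}$ is trivial (take $n=1$), and order-preservation is immediate from $I \subseteq J \Rightarrow I^{[n]} \subseteq J^{[n]}$ for all $n$. The substantive step is idempotence. Given $x \in (I^{Frob})^{Frob}$, pick $n$ with $x^n \in (I^{Frob})^{[n]}$, write $x^n = \sum_i r_i y_i^n$ with $y_i \in I^{Frob}$, and for each $i$ pick $m_i$ with $y_i^{m_i} \in I^{[m_i]}$. The same $(u+v)^k=u^k+v^k$ trick upgrades this to $y_i^{m_i k} \in I^{[m_i k]}$ for every $k$; in particular, taking a common multiple $m$ of all the $m_i$ gives $y_i^{mn} \in I^{[mn]}$ for every $i$. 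Then
\[
x^{mn} = (x^n)^m = \sum_i r_i^m\, y_i^{mn},
\]
and substituting the expressions $y_i^{mn} = \sum_k t_{ik} b_{ik}^{mn}$ (with $b_{ik} \in I$) into the right-hand side exhibits $x^{mn}$ as an element of $I^{[mn]}$. Thus $x \in I^{Frob}$, so $(I^{Frob})^{Frob} \subseteq I^{Frob}$; the reverse inclusion follows from extension.

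The proof is essentially a bookkeeping exercise, not a deep argument; the only ``obstacle'' is keeping the nested powers straight in the idempotence step and remembering to invoke cancellativity (via Lemma~\ref{binom trick}(2)) to justify the freshman's dream identity at each use. Without cancellativity $f_n$ would fail to be an endomorphism and the entire argument would collapse, so it is worth flagging explicitly where that hypothesis is being spent.
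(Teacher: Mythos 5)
Your proof is correct and follows essentially the same route as the paper's: verify $I^{Frob}$ is an ideal via the freshman's dream from Lemma~\ref{binom trick}(2), note extension and order-preservation are immediate, and handle idempotence by raising the defining equation $x^n=\sum_i r_i y_i^n$ to a suitable power (a common multiple of the $m_i$ in your version, the product $\prod_i k_i$ in the paper's — the same thing) so that each $y_i$ lands in $I^{[\,\cdot\,]}$ at the right exponent. The only cosmetic difference is that you spell out why $I^{Frob}$ is closed under addition, where the paper simply asserts it from $f_n$ being an endomorphism; your extra care there is harmless and, if anything, fills in a small gap.
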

\begin{proof}
Since $f_n$ is an endomorphism of $A$ for any $n \in \mathbb{N}$, it follows that $I^{Frob}$ is clearly an ideal of $A$. Extension and order-preservation properties are obvious. As for idempotence, let $x \in (I^{Frob})^{Frob}$. Then, there exists some $n \in \mathbb{N}$ such that $x^n =\sum_{i=1}^s r_ia_i^n$ where $a_i \in I^{Frob}$ and $r_i \in A$. For each $a_i$ there exists $k_i\in\mathbb{N}$ such that $a_i ^{k_i} \in I^{[k_i]}$. Let $t = \prod_{i=1}^s k_i$. Then, $x^{nt} = (\sum_{i=1}^s r_ia_i^n)^t = \sum_{i=1}^s (r_i)^ta_i^{nt}$ following Lemma \ref{binom trick} (2). Let $t_i =  \prod_{j=1, j\neq i}^s k_j$. Then, we have
\begin{equation}\label{sum}
x^{nt} = \sum_{i=1}^s (r_i)^ta_i^{nt} = \sum_{i=1}^s (r_i)^t(a_i^{k_i})^{nt_i}
\end{equation}
and since $a_i ^{k_i} \in I^{[k_i]}$ it follows from \eqref{sum} that $x^{nt} \in I^{[nt]}$ by applying Lemma \ref{binom trick} (2) again. Thus, $x\in I^{Frob}$. Since the other inclusion $I^{Frob} \subseteq (I^{Frob})^{Frob}$ is obvious, this completes the proof.
\end{proof}

\begin{rmk}
It is worth noting that one can define the Frobenius closure more generally for semirings that satisfy the following condition:
\begin{equation}\label{eq: Frobenius}
x^n+y^n=(x+y)^n, \quad \forall x,y \in A,~n \in \mathbb{N}.
\end{equation}
Such semirings include those whose trivial congruence is radical; this includes cancellative semirings since quotient cancellative congruences (as in \cite{joo2014prime}) are radical and totally ordered semirings. 

To see this why \eqref{eq: Frobenius} holds for these cases, for the case where the trivial congruence is radical, two elements are equal if and only if they are equal modulo each prime congruence, so it reduces to the cancellative case.  For totally ordered semirings, one can easily prove this by splitting into cases based on whether $x$ or $y$ is larger.	
\end{rmk}

\subsection{Closure operations for semirings via congruence relation}\label{Cl by cong}

In this section, we provide example of finite type closure operation on a set of congruences on a semiring $A$.

\begin{mydef}\label{definition: closure for congruence}
	Let $A$ be a semiring and $\mathcal{C}$ be a set of congruence relations on $A$. A closure operation $cl$ on $\mathcal{C}$ is a set map:
	\[
	cl:\mathcal{C} \to \mathcal{C}, \quad C\mapsto C^{cl}
	\] 	
	which satisfies the following:
	\begin{enumerate}
		\item 
		(Extension) $C \subseteq C^{cl}$ for all $C \in \mathcal{C}$. 
		\item 
		(Idempotence) $C^{cl}=(C^{c})^{cl}$ for all $C \in \mathcal{C}$. 
		\item 
		(Order-preservation) If $C_1 \subset C_2$, then $C_1^{cl} \subseteq C_2^{cl}$ for all $C_1,C_2 \in \mathcal{C}$. 
	\end{enumerate}
\end{mydef}

\begin{rmk}
	It follows from Remark \ref{remark: classical theory} that when $A$ is a ring, Definition \ref{definition: closure for congruence} is the same as a closure operation on a ring. 
\end{rmk}

\subsubsection{Radical closure}
One defines the \emph{radical} $\sqrt{C}$ of a congruence $C$ as the intersection of all prime congruences containing $C$.

\begin{pro}\label{proposition: radical closure for ideals}
Let $A$ be an idempotent semiring and $\mathcal C$ be the set of congruence relations on $A$. Then, \[
Rad: \mathcal{C}\longrightarrow \mathcal{C}, \quad C \mapsto \sqrt{C}
\]
is a closure operation. 
\end{pro}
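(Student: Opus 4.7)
The proof is a direct verification of the three axioms in Definition \ref{definition: closure for congruence}, together with the well-definedness of $Rad$. Since an arbitrary intersection of congruences is again a congruence (this is immediate from the conditions (1)--(5) defining a congruence, each of which is preserved under intersection), $\sqrt{C}$ is a bona fide element of $\mathcal{C}$; in the degenerate case $C=A\times A$, one reads the empty intersection as $A\times A$ itself, so $Rad$ is defined on all of $\mathcal{C}$.

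For \textbf{extension}, observe that $C\subseteq P$ for every prime congruence $P$ appearing in the intersection defining $\sqrt{C}$, so $C\subseteq \bigcap_{C\subseteq P}P=\sqrt{C}$. For \textbf{order-preservation}, if $C_1\subseteq C_2$, then every prime congruence containing $C_2$ also contains $C_1$; hence the indexing family for $\sqrt{C_1}$ contains the indexing family for $\sqrt{C_2}$, so the corresponding intersections satisfy $\sqrt{C_1}\subseteq\sqrt{C_2}$.

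The only step requiring a small argument is \textbf{idempotence}. The inclusion $\sqrt{C}\subseteq\sqrt{\sqrt{C}}$ is a special case of extension applied to $\sqrt{C}$. For the reverse inclusion, one observes that the set of prime congruences containing $C$ coincides with the set of prime congruences containing $\sqrt{C}$: if $P$ is prime and $C\subseteq P$, then $P$ is one of the terms in the intersection defining $\sqrt{C}$, so $\sqrt{C}\subseteq P$; conversely, if $\sqrt{C}\subseteq P$, then $C\subseteq\sqrt{C}\subseteq P$ by extension. Consequently the two intersections agree and $\sqrt{\sqrt{C}}=\sqrt{C}$.

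There is no serious obstacle here, but the one point to be careful about is the empty-intersection convention in the case $C=A\times A$ (or more generally whenever $C$ is not contained in any prime congruence): one should set $\sqrt{A\times A}:=A\times A$ so that $Rad$ lands in $\mathcal{C}$ and extension holds uniformly. With this convention in place, all three axioms follow from the elementary observations above without needing any idempotency of $A$ beyond what is implicit in the definition of prime congruence.
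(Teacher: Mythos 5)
Your proof is correct and follows the same overall strategy as the paper's: verify extension and order-preservation directly, then establish idempotence by showing that the family of prime congruences over $C$ coincides with the family over $\sqrt{C}$. The one substantive difference is in how the key implication $C\subseteq P\implies\sqrt{C}\subseteq P$ is justified. The paper routes this through the Jo\'o--Mincheva result that a prime congruence is its own radical ($\sqrt{P}=P$) together with order-preservation ($C\subseteq P\implies\sqrt{C}\subseteq\sqrt{P}=P$). You instead observe directly that if $C\subseteq P$ and $P$ is prime, then $P$ is literally one of the terms in the intersection defining $\sqrt{C}$, so $\sqrt{C}\subseteq P$ with no detour. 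Your argument is therefore slightly more elementary and self-contained, since it does not invoke the external citation. You also explicitly address the empty-intersection convention for $C=A\times A$ (or more generally any $C$ not contained in a prime), a point the paper's short proof does not spell out but which is needed for $\sqrt{C}$ to be well-defined as an element of $\mathcal{C}$ and for extension to hold in all cases. Both proofs are sound; yours is a modest streamlining.
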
	
\begin{proof}
$(1)$ and $(3)$ of Definition \ref{definition: closure for congruence} are clear. For $(2)$, we only have to show that if $C$ is a congruence and $P$ is a prime congruence, then 
\[
C \subseteq P \iff \sqrt{C} \subseteq P. 
\]
If $P$ is a prime congruence, then one has $\sqrt{P}=P$ by \cite[Proposition 3.12]{joo2014prime}. Therefore, if $C \subseteq P$ then we have that $\sqrt{C} \subseteq \sqrt{P}=P$. The converse is also clear since $C \subseteq \sqrt{C}$.	
\end{proof}

One may also see the idempotence of the radical closure operation through \cite[Theorem 3.9]{joo2014prime} which gives another description of $\sqrt{C}$. In what follows, all products are twisted product unless otherwise stated. 

\begin{mydef}\cite[Definition 3.4]{joo2014prime}
Let $A$ be an idempotent semiring. For an element $\alpha=(x,y) \in A$, a generalized power of $\alpha$ is an element of $A\times A$ of the following from:
\[
((\alpha^*)^m +(c,0))\alpha^n, \quad m,n \in \mathbb{N},c \in A,
\]	
where $\alpha^*:=(x+y,0)$. We let $GP(\alpha)$ be the set of all generalized powers of $\alpha$. 
\end{mydef}

Jo\'o and Mincheva proved the following:

\begin{mytheorem}\cite[Theorem 3.9]{joo2014prime}
Let $C$ be a congruence on an idempotent semiring. One has the following:
\begin{equation}\label{equation: radical equation}
\sqrt{C}= \{ \alpha \mid GP(\alpha) \cap C \neq \emptyset\}.
\end{equation}
\end{mytheorem}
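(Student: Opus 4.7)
The plan is to establish the equality by proving the two inclusions separately, paralleling the classical proof that $\sqrt{I}$ equals the set of elements some power of which lies in $I$ for ideals in a commutative ring.

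For the easier inclusion $\{\alpha \mid GP(\alpha) \cap C \neq \emptyset\} \subseteq \sqrt{C}$, I would fix $\alpha = (x,y)$ with a generalized power $g = ((\alpha^*)^m + (c,0)) \cdot_t \alpha^n$ lying in $C$ and show that $\alpha$ lies in every prime congruence $P$ containing $C$. Writing $g = \beta \cdot_t \alpha^n$ with $\beta = ((x+y)^m + c, 0)$, primality of $P$ forces either $\alpha^n \in P$ or $\beta \in P$. The first case reduces to $\alpha \in P$ by iterating primality. In the second case, $\beta \in P$ means $(x+y)^m + c \sim_P 0$ in $A/P$; since $A/P$ is totally ordered and idempotent with minimum $0$ (by \cite{joo2014prime}), any relation $u + v \sim_P 0$ forces $u, v \sim_P 0$, so $(x+y)^m \sim_P 0$. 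Cancellativity and integrality of $A/P$ then yield $[x+y] = 0$ in $A/P$, and the same minimum argument gives $[x] = [y] = 0$, i.e., $\alpha \in P$.

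For the reverse inclusion, I would argue by contrapositive. Suppose $\alpha \notin R(C) := \{\beta \mid GP(\beta) \cap C \neq \emptyset\}$, and aim to construct a prime congruence $P \supseteq C$ with $\alpha \notin P$. Let $\mathcal{F}$ be the family of congruences $D \supseteq C$ with $D \cap GP(\alpha) = \emptyset$. Then $\mathcal{F}$ is nonempty (it contains $C$) and closed under directed unions, so Zorn's lemma produces a maximal element $P$. To see that $P$ is prime, assume for contradiction that some $\beta, \gamma \notin P$ satisfy $\beta \cdot_t \gamma \in P$. By maximality, the congruences generated by $P \cup \{\beta\}$ and $P \cup \{\gamma\}$ each meet $GP(\alpha)$, providing two generalized powers $g_\beta, g_\gamma$ of $\alpha$ that become trivial modulo these enlarged congruences. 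The crux is to combine $g_\beta$ and $g_\gamma$, using $\beta \cdot_t \gamma \in P$ to cancel the newly introduced relations, into a single element of $GP(\alpha)$ already belonging to $P$, contradicting $P \in \mathcal{F}$.

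The main obstacle will be this final fusing step in the second direction. It requires isolating closure properties of $GP(\alpha)$ under the twisted product and under addition of diagonal perturbations of the form $(c, 0)$, and then performing a fairly intricate algebraic manipulation that exploits the specific shape $\alpha^* = (x+y, 0)$ to absorb extraneous terms into a new $c$. This is precisely the content of \cite[Theorem 3.9]{joo2014prime}, and there does not seem to be a substantially shorter route: generalized powers were engineered exactly so that they admit this kind of fusion under twisted multiplication, and verifying it comes down to a careful bookkeeping of binomial-type expansions in the idempotent setting.
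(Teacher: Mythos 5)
This theorem is quoted from \cite[Theorem 3.9]{joo2014prime} and used as a black box in the paper, which gives no proof of it; so there is no in-paper argument to compare against. Judged on its own, your sketch of the easy inclusion is essentially correct: the generalized power factors as $\beta\cdot_t\alpha^n$ with $\beta=((x+y)^m+c,0)$, and in any prime $P\supseteq C$ one uses primality, multiplicative cancellativity of $A/P$, and minimality of $[0]$ in the idempotent quotient to push down to $\alpha\in P$. (The degenerate cases $m=0$ or $n=0$ force $[1]=[0]$ in $A/P$ and so cannot occur for a proper prime, which you should note.)

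The reverse inclusion, however, has a genuine gap. Your Zorn's-lemma setup is standard and correct, but the crux --- that a maximal congruence $P\supseteq C$ disjoint from $GP(\alpha)$ is prime --- is exactly what you leave unproved, and your deferral to ``the content of Theorem 3.9'' is circular, since that is the very theorem under proof. To close the argument you would need (i) a tractable description of the congruence $\langle P,\beta\rangle$, which is not simply $P$ together with twisted multiples of $\beta$ but the transitive closure of the subsemiring of $A\times A$ generated by $P$, $\beta$, and its transpose, and (ii) a closure lemma for generalized powers allowing you to combine $g_\beta\in\langle P,\beta\rangle\cap GP(\alpha)$ and $g_\gamma\in\langle P,\gamma\rangle\cap GP(\alpha)$, using $\beta\cdot_t\gamma\in P$, into a single element of $GP(\alpha)\cap P$. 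These two technical lemmas are the real work in Jo\'o--Mincheva's proof; your proposal neither states nor establishes them, so the argument does not close.
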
	

We have the following proof showing that $\sqrt{\sqrt{C}}=\sqrt{C}$ only by using generalized powers. 

\begin{pro}
Let $A$ be an idempotent semiring and $C$ be a congruence on $A$, then we have $$\sqrt{\sqrt{C}}=\sqrt{C}.$$ 
\end{pro}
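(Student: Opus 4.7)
The plan is to use the explicit description in \eqref{equation: radical equation}, reducing the statement to a purely algebraic assertion: \emph{a generalized power of a generalized power is itself a generalized power of the original element.} The inclusion $\sqrt{C} \subseteq \sqrt{\sqrt{C}}$ is immediate from the extension property (since $C \subseteq \sqrt{C}$ gives $GP(\alpha)\cap C \subseteq GP(\alpha)\cap \sqrt{C}$), so the entire content is the reverse inclusion. For this, I would take $\alpha \in \sqrt{\sqrt{C}}$ and chase witnesses: first pick $\beta = ((\alpha^*)^m + (c,0))\alpha^n \in GP(\alpha) \cap \sqrt{C}$, and then pick $\gamma = ((\beta^*)^{m'} + (c',0))\beta^{n'} \in GP(\beta) \cap C$. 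The goal is to rewrite $\gamma$ in the form $((\alpha^*)^{m''} + (c'',0))\alpha^{n''}$ for some $m'', n'' \in \mathbb{N}$ and $c'' \in A$.

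To do this, I would first record two bookkeeping facts about the twisted product. Writing $\alpha = (x,y)$, one has $(\alpha^*)^k = ((x+y)^k, 0)$ for every $k$, because the twisted product of elements whose second coordinate is $0$ is just coordinatewise; and the sum of the two coordinates of $\alpha^n$ equals $(x+y)^n$, since expanding $\alpha^n$ partitions the monomials $x^{n-i}y^i$ between the two slots according to parity. Together these give $\beta^* = ((x+y)^n((x+y)^m + c),\,0)$. Similarly, since $(\alpha^*)^m + (c,0) = ((x+y)^m + c,\,0)$ has second coordinate $0$, one computes $\beta^{n'} = \bigl(((x+y)^m + c)^{n'},\,0\bigr) \cdot_t \alpha^{nn'}$ by commutativity and associativity of $\cdot_t$.

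Substituting into the definition of $\gamma$ and factoring $(x+y)^n$ out of the expression inside $(\beta^*)^{m'}$, the first coordinate of the non-$\alpha^{nn'}$ factor collapses to
\[
D := (x+y)^{nm'}\bigl((x+y)^m + c\bigr)^{m'+n'} + c'\bigl((x+y)^m + c\bigr)^{n'}.
\]
Expanding $((x+y)^m + c)^{m'+n'}$ by the idempotent binomial rule, the pure top term $(x+y)^{m(m'+n')}$ contributes the monomial $(x+y)^{nm' + m(m'+n')}$, while every other term in the entire expression for $D$ carries a positive power of $c$ or a factor of $c'$. Setting $m'' := nm' + m(m'+n')$, letting $c''$ denote the sum of all the remaining terms, and setting $n'' := nn'$, one gets $(D,0) = (\alpha^*)^{m''} + (c'',0)$, so $\gamma = ((\alpha^*)^{m''} + (c'',0))\alpha^{n''} \in GP(\alpha)$. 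Since $\gamma \in C$, this exhibits $\alpha \in \sqrt{C}$ via \eqref{equation: radical equation}.

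The main obstacle is the bookkeeping: the twisted product is unfamiliar enough that one must be careful that commutativity and associativity of $\cdot_t$ really do let one pull scalars in $(A,0)$ out of products, and that the expansions take place in the idempotent semiring $A \times A$ (where the binomial expansion does hold term-by-term because we are expanding in the first coordinate only, where everything sits in $(A,0)$). Once that is checked, the identification of the ``leading term'' $(x+y)^{m''}$ inside $D$ is automatic, and no cancellation hypothesis on $A$ is needed.
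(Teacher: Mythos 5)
Your argument is correct and is essentially the paper's own proof: both work through the generalized-power description of the radical in \cite[Theorem 3.9]{joo2014prime}, both rely on the identity that the sum of the two coordinates of the twisted power $\alpha^n$ equals $(\alpha_1+\alpha_2)^n$ to compute $\beta^*$, and both then expand the resulting expression and isolate a leading pure power of $\alpha_1+\alpha_2$ so that everything else can be lumped into the ``$+\,c''$'' slack term. Your version is organized slightly more systematically (factoring the $(\,\cdot\,,0)$ part out of $\beta^{n'}$ at the start), but the underlying computation and the key observations coincide with the paper's.
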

\begin{proof}
It is clear that $\sqrt{C} \subseteq \sqrt{\sqrt{C}}$. Suppose that $\alpha=(\alpha_1,\alpha_2) \in \sqrt{\sqrt{C}}$, that is there exist $i,j \in \mathbb{N}$ and $c \in A$ such that 
\begin{equation}\label{equation1}
\beta:=(\beta_1,\beta_2) = ( (\alpha_1+\alpha_2)^i +c, 0) \alpha^j \in \sqrt{C}. 
\end{equation} 
Then, since $\beta \in \sqrt{C}$, we further have $i',j' \in \mathbb{N}$, $c' \in A$ such that
\begin{equation}\label{equation2}
( (\beta_1+\beta_2)^{i'}+c',0)\beta^{j'} \in C. 
\end{equation}
By substituting $\beta$ in \eqref{equation1}, we have that
\begin{equation}\label{equation3}
( (\beta_1+\beta_2)^{i'}+c',0) ( (\alpha_1+\alpha_2)^i +c,0)^{j'}\alpha^{jj'} \in C. 
\end{equation}
Since $(a,0)^n = (a^n,0)$ for any $a \in A$, we can write \eqref{equation3} as follows:
\begin{equation}\label{equation4}
( (\beta_1+\beta_2)^{i'}+c',0) ( (\alpha_1+\alpha_2)^{ij'} +k,0)\alpha^{jj'} \in C, \textrm{ for some } k \in A. 
\end{equation}
We claim the following:
\begin{equation}\label{claim}
(\beta_1+\beta_2)^{i'} +c' = (\alpha_1+\alpha_2)^{ii'j} +c'', \textrm{ for some }
 c'' \in A.
\end{equation}
If our claim holds, then it follows from \eqref{equation4} that
\[
( (\alpha_1+\alpha_2)^{ii'j}+c'',0)( (\alpha_1+\alpha_2)^{ij'}+k,0)\alpha^{ij} =( (\alpha_1+\alpha_2)^{ii'j+ij'}+k',0)\alpha^{jj'}, \textrm{ for some } k' \in A,
\]
showing that $\alpha \in \sqrt{C}$. 

We now only have to prove our claim. Let $\alpha^j:=(\gamma_1,\gamma_2)$. One can easily observe the following:
\[
\gamma_1+ \gamma_2 = (\alpha_1+\alpha_2)^j.
\]
Then, we can rewrite $\beta$ in terms of $\gamma_1$ and $\gamma_2$:
\[
\beta=(\beta_1,\beta_2)=( (\alpha_1+\alpha_2)^i +c, 0) (\gamma_1,\gamma_2) = ( ((\alpha_1+\alpha_2)^i +c)\gamma_1, ((\alpha_1+\alpha_2)^i +c)\gamma_2  )
\]
We have that
\[
\beta_1+\beta_2 = ((\alpha_1+\alpha_2)^i +c) (\gamma_1+\gamma_2)=((\alpha_1+\alpha_2)^i +c)(\alpha_1+\alpha_2)^j
\]
\[
=(\alpha_1+\alpha_2)^{ij} + a, \textrm{ for some } a \in A, 
\]
and hence we have
\[
(\beta_1+\beta_2)^{i'} = (\alpha_1+\alpha_2)^{ii'j} + a', \textrm{ for some } a' \in A. 
\]
This proves our claim by adding $c'$ to both hand sides. 
\end{proof}

\begin{rmk}
The radical closure operation on congruences is indeed a finite type closure operation. This can be easily verified by using the description of radical closure through generalized powers.
\end{rmk}

\bibliography{spectral}\bibliographystyle{alpha}

\end{document}